\newcommand{\dom}{\mathrm{dom}}
\newcommand{\commentout}[1]{}
\begin{document}

\title{Smooth over-parameterized solvers\\for non-smooth structured optimization}

\author{Clarice Poon\\
	Department of Mathematical Sciences, \\ University of Bath\\
	\url{cmhsp20@bath.ac.uk}	
	\and
        Gabriel Peyr\'e \\
        CNRS and DMA, ENS, \\ PSL Universit\'e.\\
        \url{gabriel.peyre@ens.fr}
}

\date{}

\maketitle

\newcommand{\keywords}[1]{}
\newcommand{\subclass}[1]{}


\begin{abstract}
	Non-smooth optimization is a core ingredient of many imaging or machine learning pipelines. 
	Non-smoothness encodes structural constraints on the solutions, such as sparsity, group sparsity, low-rank and sharp edges.
	It is also the basis for the definition of robust loss functions and scale-free functionals such as square-root Lasso.  
	Standard approaches to deal with non-smoothness leverage either proximal splitting or coordinate descent. These approaches are effective but usually require parameter tuning, preconditioning or some sort of support pruning.  
	In this work, we advocate and study a different route, which operates a non-convex but smooth over-parametrization of the underlying non-smooth optimization problems. This generalizes quadratic variational forms that are at the heart of the popular Iterative Reweighted Least Squares (IRLS).
	Our main theoretical contribution connects gradient descent on this reformulation to a mirror descent flow with a varying Hessian metric. This analysis is crucial to derive convergence bounds that are dimension-free. This explains the efficiency of the method when using small grid sizes in imaging.
	Our main algorithmic contribution is to apply the Variable Projection (VarPro) method which defines a new formulation by explicitly minimizing over part of the variables. This leads to a better conditioning of the minimized functional and improves the convergence of simple but very efficient gradient-based methods, for instance quasi-Newton solvers. 
	We exemplify the use of this new solver for the resolution of regularized regression problems for inverse problems and supervised learning, including  total variation prior and non-convex regularizers. 
\keywords{Sparsity \and low-rank \and compressed sensing \and variable projection \and mirror descent \and non-convex optimization}
\subclass{68Q25, 68R10, 68U05}
\end{abstract}


\section{Introduction}

This paper introduces and studies a new class of solvers for a general set of sparsity-regularized problems. It leverages two key ideas: a smooth over-parameterization of the initial non-smooth problem and a bi-level variable projection to enhance its conditioning and cope with analysis-type priors. We first present these two points before relating them to previous works.

\subsection{Non-convex parameterizations}

\paragraph{Structured non-smooth optimization problems}

Let $A: \RR^n \to \RR^m$ and  $L:\RR^n\to\RR^p$ be  linear operators. We consider the following non-smooth optimization problem
\begin{equation}\label{eq:gen-tv}
	\min_{x\in\RR^n}  \Phi(x) \eqdef \norm{L x}_{1,2} +  F_0(A x).
\end{equation}
Here $A \in \RR^{m \times n}$ plays the role of the imaging operator in inverse problems or  the design matrix for supervised learning, while $F_0:\RR^m \to [0,\infty]$ is a proper, lower semi-continuous convex loss function.
A guiding example is the $\ell_2$ loss $F_0(z) = \frac{1}{2\lambda}\norm{z - y}^2$ where $y\in \RR^m$ represents the given data and  $\lambda>0$ is a regularisation parameter. 
The regularization is induced by a group sparsity norm  
\begin{equation}\label{eq:l1l2}
	\norm{z}_{1,2} \eqdef \sum_{g\in\Gg} \norm{z_g}_2
	= \sum_{g\in\Gg} ( \sum_{k\in g} z_k^2 )^{\frac{1}{2}}, 
\end{equation}
where $\Gg$ is partition of $\ens{1,\ldots, p}$. 
The simplest setup is $L=\Id_{n \times n}$, so that \eqref{eq:gen-tv} is a group-lasso problem inducing direct group-sparsity of $x$~\cite{yuan2006model}. The sub-case where the group have size 1 is the classical Lasso~\cite{tibshirani1996regression}, which is useful to perform feature selection in learning, regularized inverse problems in imaging~\cite{starck2010sparse} and for compressed sensing~\cite{CRT:CS}. 
Using more general $L$ operators leads to more complex regularization priors. A popular case is when $L$ is a finite difference discretization of the gradient operator, so that $\norm{L x}_{1,2}$ is the total variation semi-norm, favoring piecewise constant signals in 1-D~\cite{mammen1997locally} and cartoon images in 2-D~\cite{rudin1992nonlinear}.
Another example is when $L x = ( x_h )_{h \in \Hh}$ extract (possibly overlapping) blocks $h$ (so that $\Hh$ is in general not a partition) to favor possibly complex block patterns~\cite{bach2011optimization}. 

\paragraph{Hadamard over-parameterization}

The goal of this paper is to study the application on \eqref{eq:gen-tv} of the \textit{Hadamard} parametrization of $\norm{\cdot}_{1,2}$, which reads
\begin{equation}\label{eq:overparam}
	\norm{z}_{1,2} = \min_{u\odot v = z} \frac12 \norm{u}^2_2 + \frac12 \norm{v}_2^2,
\end{equation}
where the minimisation is over vectors $u \in\RR^p$, $v\in\RR^{\abs{\Gg}}$ and $u\odot v \eqdef (u_g v_g)_{g\in\Gg}$. 
Thanks to this ``over-parameterization'', problem \eqref{eq:gen-tv} can be equivalently written as
\begin{align}\label{eq:hadamard-generalized}
&\min_{x\in\RR^n}  \Phi(x) = \min_{v\in\RR^{\abs{\Gg}}}\min_{u\in\RR^p}   G(u,v)\\
&\qwhereq G(u,v) \eqdef \min_x \enscond{ \frac{1}{2} \norm{u}^2_2 + \frac{1}{2} \norm{v}^2_2 + F_0(A x) }{ Lx = u\odot v}. 
\end{align}
In the case where $L=\Id$, this problem can be written as
\begin{equation}\label{eq:hadam-id}
	\min_{v\in\RR^{\abs{\Gg}}}\min_{u\in\RR^p}   \frac{1}{2}\norm{u}_2^2 + \frac12 \norm{v}_2^2 + F_0(A(u\odot v))
\end{equation}
and this is a smooth albeit nonconvex optimisation problem provided that $F_0$ is smooth. 
This idea has been previously studied in \cite{hoff2017lasso}. Moreover, in this case, the nonconvexity is harmless in the sense that all saddle points are strict and one can guarantee global convergence with certain gradient-based algorithms \cite{2021-Poon-noncvxpro}. 
In Section \ref{sec:finegrid-mirror}, we provide connections of gradient descent on this reparametrized form to mirror descent and show how such a parametrization leads to dimension-independent convergence rates. 

\paragraph{Variable projection (VarPro) reduction}

In the case $L=\Id$, it is tempting to directly use smooth optimization methods to solve~\eqref{eq:hadam-id}, but as exposed in our previous work~\cite{2021-Poon-noncvxpro}, it makes sense to improve its conditioning by the so-called ``variable projection'' (VarPro) technique. 
In the more complicated case where $L\neq \Id$ and is not invertible, the Hadamard parameterization looks at first sight unhelpful as we have simply added in~\eqref{eq:hadam-id} the difficulty of  non-convexity without alleviating the non-smoothness issue. 
We thus propose to replace~\eqref{eq:hadam-id} by the following bilevel program
\begin{equation}\label{eq:varpro0}
\min_{v\in\RR^{\abs{\Gg}}} f(v) \qwhereq f(v) = \min_{u\in\RR^p} G(u,v).
\end{equation}
This idea of marginalizing on one variable is called variable projection (VarPro) and is a well-known technique~\cite{golub1973differentiation,golub2003separable}. One of its  advantages is that splitting into a bilevel problem leads to better problem conditioning. In particular, in certain situations when $G$ is smooth, minimizing $f$ instead of $G$ is preferable because the condition number of the Hessian of $f$ can be shown to be no worse (and often substantially better) than that of $G$ \cite{ruhe1980algorithms}. 
Moreover, as we see below, while $G$ is not differentiable, the function $f$ is differentiable. The motivations for the VarPro formulation are thus two-fold: first, it is essential to obtain a smooth optimisation problem when $L$ is not invertible; second, even in the case where $L=\Id$, while both optimisation of $G$ and $f$ improve over standard algorithms for handling \eqref{eq:gen-tv},  the improvement in conditioning in the VarPro formulation can further lead to substantial numerical gains over directly optimizing $G$. 

\begin{rem}[More general settings]
For the sake of clarity, we mostly focus in this article on this $\ell^1-\ell^2$ functional~\eqref{eq:l1l2}. The methods and algorithm that we introduce can be extended to any regulariser that admits a quadratic variational form, including nuclear norm and $\ell_q$ regularisation with $q<2$; and also handle non-smooth convex losses including the $\ell_1$-loss and the constrained setting. Such extensions are discussed in Section~\ref{sec:nonsmooth-robust}.
\end{rem}

\subsection{Previous Works}

\paragraph{Lasso solvers} The case $L=\Id$ is arguably simpler, and can be tackled using a flurry of non-smooth optimization solvers.
The simplest one is the forward-backward algorithm \cite{lions1979splitting}, which is known as the iterative soft thresholding (ISTA) algorithm~\cite{daubechies2004iterative}. 
It convergence is relatively slow, and assuming the dimension is a fixed constant, it enjoys a $1/t$ rate in worse case. This rate is improved to $1/t^2$ using Nesterov acceleration~\cite{nesterov-momentum} and leads to the  FISTA algorithm~\cite{beck2009fast}. 
In practice, the speed of the algorithm is improved using adaptive stepsizes and restarting strategies \cite{o2015adaptive} as well as quasi-Newton and variable metric methods \cite{combettes2014variable,becker2019quasi}. 
To better cope with fine grid settings in imaging, and obtain better dimension-free analysis, it is possible to replace Euclidean metrics by mirror descent methods~\cite{nemirovskij1983problem}. We give some details about this line of ideas in Section~\ref{sec:finegrid-mirror} since this is closely related to the Hadamard parameterization. 
For problems with very sparse solutions, algorithms leveraging coordinate descent strategies are often more efficient~\cite{friedman2010regularization}. These schemes are typically combined with support pruning schemes \cite{ghaoui2010safe,ndiaye2017gap,massias2018celer}.

\paragraph{Analysis-type priors and non-smooth loss}

Problems where $L \neq \Id$ cannot be treated directly using primal descent methods, and require some form of primal-dual reformulation. Proximal splitting schemes can be applied, such as Alternating Direction Method of Multipliers (ADMM) \cite{boyd2011distributed}, Douglas-Rachford algorithms \cite{douglas1956numerical} and primal-dual algorithms \cite{chambolle2011first}
These schemes are  popular due to their relatively low per iteration complexity. They usually exhibit slow sublinear convergence rates in general, with linear convergence under strong convexity and sharpness assumptions~\cite{liang2018local}. 
The performance of these methods are improved by using adaptive step size selection and preconditioning~\cite{bredies2017proximal}.
These solvers can also be used for non-smooth loss functions (as detailed in Section~\ref{sec:nonsmooth-robust}) such as the square root lasso problem~\cite{belloni2011square}, TV-$\ell^1$~\cite{nikolova2004variational} and matrix-regularizers such as nuclear norm~\cite{recht2010guaranteed}.
Similarly to the forward-backward algorithm, mirror geometry can be introduced in these primal-dual solvers to better cope with sparsity and positivity constraints~\cite{silveti2020generalized}.

\paragraph{The quadratic variational formulation and IRLS}

As explained in~\cite{2021-Poon-noncvxpro}, over-pa\-ra\-me\-te\-ri\-za\-tion formula of the form~\eqref{eq:overparam} are equivalent (up to a change of variable) to so-called quadratic variational formulations. 
For the case of the $\ell^1-\ell^2$ norm, writing $v_g=\sqrt{\eta_g}$ and $u_g=z_g/\sqrt{\eta_g}$ for $\eta_g \in \RR_+$, the non-convex smooth formulation~\eqref{eq:overparam} is re-written as the convex but non smooth over-parameterization 
\begin{equation}\label{eq:irls-param}
	\norm{z}_{1,2} = \frac{1}{2} \umin{\eta\in \RR_+^{\abs{\Gg}}} \pa{\frac{\norm{z_g}_2^2}{\eta_g} + \eta_g}. 
\end{equation}
These formulations can be traced back to early computer vision works such as \cite{geiger1991common,geman1992constrained}. 
A detailed account for these variational formulations can be found in \cite{micchelli2013regularizers}, and further studied in the monograph \cite{bach2011optimization} under the name of subquadratic norms. 
Such quadratic variational formulations are useful to derive, in the case $L=\Id$, the celebrated iterative reweighted least squares (IRLS) algorithm, which alternatively minimize on $z$ and $\eta$. In this basic form, IRLS fails to converge in general because of the non-smoothness of~\eqref{eq:irls-param}.
One popular approach is to add a regularization penalty $\frac{\epsilon}{2}\sum_g \eta_g^{-1}$ to the formulation~\eqref{eq:irls-param} as detailed in~\cite{daubechies2010iteratively}.
A nuclear norm version of IRLS has been used in \cite{argyriou2008convex} where an alternating minimisation algorithm was introduced. 
Instead of this IRLS convex optimization strategy, another route is to use alternating minimization directly on the non-convex $(u,v)$ parameterization~\eqref{eq:hadam-id}, see for instance \cite{rennie2005fast,hastie2015matrix,mardani2015estimating,hoff2017lasso} for the case of the $\ell_1$ and nuclear norms.

\paragraph{Variable projection}

These alternating minimization method, either on $(u,v)$ or on $(x,\eta)$ are quite slow in practice because of the poor conditioning of the resulting over-parameterized optimization problem. 
As explained in~\cite{ruhe1980algorithms,golub2003separable} the variable projection reformulation~\eqref{eq:varpro0} provably improves the conditioning of the Hessian of the functionals involved, we refer also to \cite{hong2017revisiting,zach2018descending} for more recent studies. 
This approach is classical (see for instance~\cite[Chap.10]{rockafellar2009variational} for some general theoretical results on reduced gradients), and was introduced initially for solving nonlinear least squares problems.

%

 
\paragraph{Gradient flow and fine grid analysis}

The convergence speed of first order non-smooth methods in general degrades as the dimension increases. This is the case in particular in imaging problems (such as deconvolution or super-resolution problems) as the grid size goes to zero, which corresponds to  a setting  where the object of interest is a stream of Dirac masses and  one seeks to estimate  their precise positions ~\cite{duval2017sparse}. 
While dedicated solvers have been developed to alleviate this issue and can even cope with ``off-the-grid'' formulations (without explicit discretization)~\cite{bredies2013inverse,candes2014towards}, ISTA and related forward-backward solvers are still the most popular. 
Chizat proposed in~\cite{chizat2021convergence} an analysis of rate of convergence of forward-backward when the grid size is arbitrary small, in which case $O(1/t)$ rate does not holds, and one obtains slower $O( 1/t^{\frac{4}{4+d}} )$ rates (where $d$ is the ambient dimension, e.g. $d=2$ for images). These rates can be improved to $O( \log(t)/t )$ by replacing Euclidean proximal operators by more general mirror operators, as detailed in Section~\ref{sec:finegrid-mirror}. In our work, we relate the Hadamard over-parametrization to this mirror flow, which partly explains its efficiently, and is useful to derive convergence bounds.

\subsection{Contributions}

Our first set of contributions is the derivation and the analysis in Section~\ref{sec:nonconvexparam} of a VarPro reduced method in the general case of an analysis sparsity prior. The main contribution is the proof in Theorem~\ref{thm:diff} that the resulting functional is differentiable, and an explicit formula for the gradient. 
Our second set of contributions is the proof, in Section~\ref{sec:finegrid-mirror}, that the gradient descent on the Hadamard formulation~\eqref{eq:overparam} is equivalent to a mirror-flow with a time-varying entropy function. This shows that while the descent is computed in an over-parameterized domain $(u,v)$, it is still equivalent to a classical flow on the initial variable $x$, and that this flow should be understood for a non-Euclidean, time-varying, Hessian-type metric.
This analysis is leveraged to derive dimension-free (i.e. insensitive to the grid step size) convergence bound for the gradient descent on the Hadamard formulation. Most notably, we show in Proposition~\ref{prop:lip_g} a $1/\sqrt{t}$ convergence bound on the gradient of the minimized energy, and Proposition~\ref{prop:rate} shows that the convergence in function's value is controlled by the convergence of the gradients.   
Lastly, Section~\ref{sec:nonsmooth-robust} focusses on more practical considerations, by explaining how to extend our approach to non-smooth loss functions and non-convex regularizers. These extensions are exemplified with numerical simulations on imaging problems.

\paragraph{Connection with previous works}

This work builds on our initial work~\cite{2021-Poon-noncvxpro}, which derived and studied the VarPro method in the case $L=\Id$. The case of an arbitrary $L$ is more involved because of the lack of smoothness of the Hadamard parameterization, which fortunately is to a large extent absorbed by the VarPro reduction. Beside this extension to analysis-type prior, this work also propose a novel mirror-type analysis of the Hadamard formulation. 

\section{Hadamard and VarPro parameterizations} \label{sec:nonconvexparam}

We now give a detailed analysis of the Hadamard formulation~\eqref{eq:hadamard-generalized} and its associated VarPro marginalization~\eqref{eq:varpro0}, which is crucial to ensure differentiability of the function to be minimized. 

\textit{Remark on notation:} We already introduced above the Hadamard product for $u\in \RR^p$ and $v\in \RR^{\abs{\Gg}}$ denoted as $u\odot v \eqdef (u_g v_g)_{g\in\Gg}$.  When $u,v\in \RR^p$ are of the same length (i.e. trivial group structure), we  write $u\odot v \eqdef (u_i v_i)_i$ to denote pointwise multiplication and  $u/v \eqdef (u_i/v_i)$ to denote pointwise division. We will also use  $u^2 \eqdef u\odot u$.

\subsection{Dual formulation}

Since for a generic $L$, the Hadamard formulation~\eqref{eq:hadamard-generalized} involves the resolution of a constrained problem, analyzing the differentiability of $f$ requires to study a dual formulation. 
The VarPro formulation~\eqref{eq:hadamard-generalized} has the form of a bi-level program
\begin{equation*}
	f(v) \eqdef \min_{u\in \RR^p} G(u,v)\qandq u(v) = \uargmin{u} G(u,v).
\end{equation*}
For such problem, it tempting to compute the gradient of $f$ by applying the chain rule:
\begin{equation*}
	\nabla f(v) = \partial_v G(u(v), v) + \partial_u G(u(v), v) \nabla u(v) = \partial_v G(u(v), v)
\end{equation*}
where we used the fact $\partial_u G(u(v), v) = 0$ due to optimality of $u(v)$. 
This is of course only a formal argument, and in particular, this requires $\partial_v G(u(v), v)$ to be well-defined.
For the VarPro problem \eqref{eq:varpro0}, it is not  immediately clear that $\partial_v G(u(v), v)$ is well-defined since the variable $v$ appears inside a linear constraint. However, the following proposition shows how the inner optimisation problem in \eqref{eq:varpro0} can be written as a concave maximisation problem involving a dual function $\phi(v,\alpha,\xi)$ that is differentiable with respect to $v$. Note also that due to the $\norm{v\odot \alpha}^2$ term, given a maximiser $(\alpha,\xi)$ to the inner  problem, $v\odot \alpha$ is uniquely defined and $\partial_v \phi(v,\alpha,\xi) = -v\odot \alpha^2$ is thus well-defined. Precise regularity properties of the function $f$ is studied in the following section, but from this proposition, one can at least formally expect $f$ to be differentiable.

\begin{prop}[Dual formulation of the inner problem]\label{prop:gentv}
The function $f$ defined in \eqref{eq:varpro0} can be written as 
\begin{equation}\label{eq:def_f_dual}
f(v) = \min_{u\in\RR^p} G(u,v)=   \max_{\xi\in\RR^m,\alpha\in\RR^p} \frac12\norm{v}^2 +\phi(v,\alpha,\xi) 
\end{equation}
$$
\qwhereq \phi(v,\alpha,\xi) \eqdef - \frac12 \norm{v\odot \alpha}^2 -F_0^*(\xi)  -\iota_{K}(\xi,\al),
$$
where $\iota_K$ denotes the indicator function on the set $K \eqdef \enscond{(\xi,\al)}{ L^\top \alpha = -A^\top \xi }$. Moreover,  the optimal $\al,\xi$ satisfy $Ax\in  \partial F_0^*(\xi)$ and $Lx = \alpha\odot v^2$ for some $x\in\RR^n$.
\end{prop}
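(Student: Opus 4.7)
The plan is to expand the nested definition of $f(v)$, peel off the constant term in $v$, and then apply Lagrangian/Fenchel duality twice — once to eliminate $u$ explicitly, and once to dualise the remaining $x$-minimisation involving $F_0\circ A$.

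\textbf{Step 1: Unpacking the bilevel definition.} I begin by writing
\begin{equation*}
f(v) = \min_{u\in\RR^p} G(u,v) = \tfrac12\|v\|^2 + \min_{u,x}\left\{ \tfrac12\|u\|^2 + F_0(Ax)\;:\; Lx = u\odot v \right\},
\end{equation*}
pulling the $\tfrac12\|v\|^2$ term outside since it depends only on $v$. The remaining problem is a convex optimisation in $(u,x)$ with a linear equality constraint parametrised by $v$.

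\textbf{Step 2: Dualising the linear constraint.} I introduce a Lagrange multiplier $\alpha\in\RR^p$ for the constraint $Lx-u\odot v=0$, giving the Lagrangian
\begin{equation*}
\mathcal L(u,x,\alpha) = \tfrac12\|u\|^2 + F_0(Ax) + \langle \alpha, Lx - u\odot v\rangle.
\end{equation*}
Under a standard constraint qualification (which holds, e.g., when $F_0$ has full domain or via Fenchel–Rockafellar), strong duality yields
$f(v)-\tfrac12\|v\|^2 = \sup_{\alpha}\inf_{u,x}\mathcal L(u,x,\alpha)$. I then perform the inner minimisations separately. The $u$-minimisation is an unconstrained quadratic: since $\langle\alpha,u\odot v\rangle = \langle \alpha\odot v,u\rangle$, the minimiser is $u^\star = \alpha\odot v$ with value $-\tfrac12\|v\odot\alpha\|^2$.

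\textbf{Step 3: Dualising the $F_0\circ A$ term.} The remaining $x$-part reads $\inf_x F_0(Ax) + \langle L^\top\alpha, x\rangle$. I apply Fenchel–Rockafellar duality (or directly rewrite $F_0(Ax)=\sup_\xi \langle\xi,Ax\rangle - F_0^*(\xi)$ and swap inf/sup) to obtain
\begin{equation*}
\inf_x\bigl(F_0(Ax) + \langle L^\top\alpha,x\rangle\bigr) = \sup_{\xi\;:\;A^\top\xi = -L^\top\alpha}\bigl(-F_0^*(\xi)\bigr),
\end{equation*}
which is exactly $\sup_\xi\{-F_0^*(\xi)-\iota_K(\xi,\alpha)\}$ with $K$ as defined. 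Combining the two inner optimisations and adding back $\tfrac12\|v\|^2$ gives formula \eqref{eq:def_f_dual}.

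\textbf{Step 4: Primal–dual optimality relations.} For any optimal $(\alpha,\xi)$ and any primal minimiser $x$, the KKT conditions produced by the two dualisations give: (i) from the $x$-step, $\xi\in\partial F_0(Ax)$, equivalently $Ax\in\partial F_0^*(\xi)$; (ii) from the $u$-step, $u^\star=\alpha\odot v$, and therefore the primal constraint $Lx=u^\star\odot v$ reads $Lx=\alpha\odot v^2$. This yields the final optimality statement.

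\textbf{Expected main obstacle.} The delicate step is the justification of strong duality/sup-inf exchange in Steps 2 and 3 for a generic proper lsc convex $F_0$ and arbitrary $L$, since the feasible set $\{Lx=u\odot v\}$ may behave badly when some entries of $v$ vanish and the image of $L$ is a strict subspace. I would address this by invoking Fenchel–Rockafellar in the form requiring only closedness and a mild qualification on $\mathrm{dom}\,F_0\cap\mathrm{range}(A)$, which is standard for the problem class considered here; alternatively, one can note that both sides are equal as functions of $v$ on the effective domain and extend by lower semicontinuity.
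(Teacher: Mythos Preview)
Your proposal is correct and follows essentially the same route as the paper. The only cosmetic difference is that the paper introduces an auxiliary variable $w=Ax$ with its own multiplier $\xi$ and dualises both constraints $Lx=u\odot v$ and $Ax=w$ in one Lagrangian, whereas you first dualise the $Lx=u\odot v$ constraint and then apply Fenchel--Rockafellar to $\inf_x F_0(Ax)+\langle L^\top\alpha,x\rangle$; the resulting computation and optimality relations are identical.
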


\begin{rem}[Example of quadratic loss]
When $F_0(z) = \frac{1}{2\lambda}\norm{z-y}_2^2$, $F_0^*(\xi)  = \frac{\lambda}{2} \norm{\xi}_2^2 + \dotp{\xi}{  y }$, so
$$
\phi(v,\alpha,\xi) = -\frac12 \norm{v\odot \al}^2 - \frac{\lambda}{2}\norm{\xi}^2 - \dotp{\xi}{y}.
$$
 The maximisation problem~\eqref{eq:def_f_dual} is therefore a quadratic problem  and given optimal solutions $(\alpha,\xi)$, there exists $x$ such that 
$$\lambda \xi = A x - y\qandq Lx  = v^2\odot \alpha \qandq L^\top \alpha - A^\top \xi = 0.$$
This can be written as the linear system
\begin{equation}\label{eq:linear_system_general}
M_v
\begin{pmatrix}
\xi\\ \alpha\\ x
\end{pmatrix} = \begin{pmatrix}
-y\\0 \\0
\end{pmatrix}, \qwhereq M_v\eqdef \begin{pmatrix}
\lambda \Id & 0 &-A\\
0 &-\diag(\bar v^2)&D\\
-A^\top &L^\top &0
\end{pmatrix}.
\end{equation}
where $\bar v\in\RR^p$ is the \textit{extension} of $v$ such that $\bar v \odot \alpha = v\odot \alpha$ for all $\alpha\in\RR^p$.
One can therefore handle the inner problem $\max_{\alpha,\xi}\phi(v,\alpha,\xi)$ by solving a linear system.
\end{rem}

\begin{proof}
We first write
$$
f(v) =  \min_{\substack{u\in\RR^p, \\ x\in\RR^n, w\in\RR^m}} \enscond{ \frac12 \norm{v}^2 + \frac12 \norm{u}^2 + F_0(w)}{ Lx= u\odot v, \; Ax = w}.
$$
Note that this is a convex optimisation problem, and by considering its dual formulation, we have
\begin{align*}
&f(v) = \min_{\substack{u\in\RR^p, \\ x\in\RR^n, w\in\RR^m}} 
 \max_{\substack{\alpha \in\RR^p,\\ \xi\in\RR^m}} \frac12 \norm{v}^2 + \frac12 \norm{u}^2  + F_0(w) +\dotp{\alpha}{ Lx - u\odot v} + \dotp{\xi}{ Ax  - w}\\
 &= 
 \max_{\alpha \in\RR^p, \xi\in\RR^m} \frac12 \norm{v}^2 - \frac12 \norm{v\odot \alpha}^2 - F_0^*(\xi) \qwhereq L^\top \alpha = -A^\top \xi, 
\end{align*}  
with the optimal $\al,\xi$ satisfying $Ax \in  \partial F_0^*(\xi)$ and $Lx =  v^2\odot \alpha$.
\end{proof}

\subsection{Differentiability}

In this section, we consider the regularity properties of $f$. We recall that 
$F_0:\RR^m \to [0,\infty]$ is a proper, lower semi-continuous convex loss function with Lipschitz gradient. 

\newcommand{\lipF}{M_F}
\begin{prop}[Well-posedness]
Assume that $F_0 \in \Cc^{1,1}(\RR^m; \RR)$ and recall the definition of $f$ from  \eqref{eq:def_f_dual} of Prop \ref{prop:gentv}. Then, $\dom(f) = \RR^p$, the set of maximisers in \eqref{eq:def_f_dual}  is non-empty.
\end{prop}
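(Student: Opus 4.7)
The plan is to establish the two claims separately.

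For $\dom(f)=\RR^p$, I would use the primal representation
\begin{equation*}
f(v) = \min_{u\in\RR^p,\, x\in\RR^n} \left\{ \tfrac12\norm{u}^2 + \tfrac12\norm{v}^2 + F_0(Ax) : Lx = u\odot v \right\}.
\end{equation*}
The trivial choice $(u,x)=(0,0)$ is admissible for every $v$ and yields $f(v)\leq \tfrac12\norm{v}^2 + F_0(0)<\infty$, since $F_0$ is real-valued on $\RR^m$ by the $\Cc^{1,1}$ hypothesis. Together with the obvious lower bound $f(v)\geq 0$ inherited from the nonnegativity of every term, this shows $\dom(f)=\RR^p$.

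For the existence of maximizers in \eqref{eq:def_f_dual}, I would work with the equivalent convex minimization
\begin{equation*}
\min_{(\xi,\alpha)\in K} h(\xi,\alpha), \qquad h(\xi,\alpha) \eqdef \tfrac12\norm{v\odot\alpha}^2 + F_0^*(\xi).
\end{equation*}
The essential input is that the Lipschitz-gradient assumption on $F_0$ (with constant $\lipF$) yields the quadratic majorant $F_0(z)\leq F_0(0)+\langle\nabla F_0(0),z\rangle + \tfrac{\lipF}{2}\norm{z}^2$, and Fenchel conjugation converts this into the coercive minorant $F_0^*(\xi)\geq \tfrac{1}{2\lipF}\norm{\xi-\nabla F_0(0)}^2 - F_0(0)$. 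In particular $F_0^*$ has quadratic growth at infinity, so any minimizing sequence $(\xi_k,\alpha_k)$ of $h$ has bounded $\xi_k$, and then the linear constraint $L^\top\alpha_k=-A^\top\xi_k$ forces $L^\top\alpha_k$ to remain bounded as well.

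The main obstacle is the lack of coercivity of $h$ in $\alpha$ when $v$ has vanishing groupwise components: on a group $g$ with $v_g=0$, the term $\norm{v\odot\alpha}^2$ exerts no control over $\alpha_g$. I would handle this by quotienting out the linear subspace
\begin{equation*}
N \eqdef \left\{(0,\delta\alpha) : v\odot\delta\alpha = 0,\; L^\top\delta\alpha = 0\right\},
\end{equation*}
which leaves both $h$ and the feasible set $K$ invariant, and replacing the minimizing sequence by its orthogonal projection onto $N^\perp$. On the nonzero-$v$ groups, boundedness of $v\odot\alpha_k$ bounds $\alpha_k$; on the zero-$v$ groups, the restriction to $N^\perp$ makes $L^\top$ injective on that coordinate block, so the earlier bound on $L^\top\alpha_k$ translates into a bound on the remaining components of $\alpha_k$. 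Extracting a convergent subsequence and invoking lower semicontinuity of $h$ then produces a minimizer, which by Prop.~\ref{prop:gentv} is an element of the dual argmax.
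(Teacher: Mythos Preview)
Your proposal is correct and follows essentially the same strategy as the paper: both arguments exploit the strong convexity of $F_0^*$ (equivalently, your quadratic minorant) to bound $\xi$ along a maximizing sequence, and both split $\alpha$ according to the support $S$ of $v$, controlling $\alpha_S$ through $\|v\odot\alpha\|^2$ and handling $\alpha_{S^c}$ via the constraint $L^\top\alpha=-A^\top\xi$.

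Two minor differences are worth noting. First, for $\dom(f)=\RR^p$ you use the primal and the feasible point $(u,x)=(0,0)$, which is slightly more direct than the paper's route through the dual. Second, the paper does not project onto $N^\perp$; instead it shows that $L^\top(\alpha_n)_{S^c}$ converges along a subsequence and then simply \emph{chooses} some $(\alpha_*)_{S^c}$ with the right image under $L^\top$, relying on the fact that $\phi$ is insensitive to $(\alpha)_{S^c}$ except through the constraint. Your quotienting by $N$ achieves the same thing in a cleaner, more systematic way and actually yields a bounded (hence subsequentially convergent) minimizing sequence rather than just a feasible limit point.
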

\textit{Notation:} We denote the range of a matrix $L$ by $\mathcal{R}(L)$. Given  $\alpha\in \RR^n$ and $S\subset \Gg$,  we write $\alpha_S$ to denote the restriction of $\alpha$ to entries whose indices are in the groups defined by $S$, that is,  the vector taking values $\alpha_j$ whenever there is a group $g$ such that $j\in g\in S$ and taking value 0 otherwise.  $L_S$ denotes the matrix $L$ with columns restricted to those indexed by $S$

\begin{proof}
First note that since $F_0\in C^{1,1}$, $F_0^*$ is strongly convex, and so, it is bounded from above. Also,
 there exist $\alpha$ and $ \xi$ satisfying $L^\top \al + A^\top \xi = 0$ (take $\alpha = 0$ and $\xi=0$).  So, for each $v$, $\max_{\alpha,\xi} \phi(v,\xi,\alpha)$ exists and hence, $f(v)<\infty$ for all $v$.
 
 To show that the set of maximisers is nonempty, first note that existence of the maximum along with strong concavity of $\phi(v,\alpha, \cdot)$ in the variable $\xi$ imply that we can consider a maximising sequence $(\al_n,\xi_n)$ with $L^\top \alpha_n + A^\top \xi_n = 0$, and $\norm{\xi_n} \leq C$ for some $C>0$. It follows that there is a convergent subsequence $\xi_{n_k} \to \xi_*$ for some $\xi_*\in\RR^m$.  Moreover, denoting the range of $L^\top$ by $\Rr(L^\top)$, since $(A^\top \xi_{n_k}) \subset \Rr(L^\top)$ is a convergent  sequence and $\Rr(L^\top)$ is closed, $A^\top \xi_*\in \Rr(L^\top)$. We also have $v\odot \alpha_n$ is uniformly bounded, so, denoting $S\eqdef \Supp(v)$,  $(\alpha_n)_S$ has a convergent subsequence. Since
$$
L^\top \alpha_n = L^\top (\alpha_n)_S + L^\top (\alpha_n)_{S^c}
$$
and both $A^\top \xi_n = L^\top \alpha_n$ and $L^\top (\alpha_n)_{S}$ converge up to a subsequence, $ L^\top (\alpha_n)_{S^c}$ is also convergent upto a subsequence. It follows that there exists $\alpha_*$ and $\xi_*$ such that $\xi_{n_k}$ converges to $\xi_*$, $(\alpha_{n_k})_S$ converges to $(\alpha_*)_S$ and $A^\top \xi_* = L^\top \alpha_*$. We then apply the fact that $\phi(v,\cdot,\cdot)$ is upper semi-continuous to deduce that $(\xi_*,\alpha_*)$ is a maximiser.
\end{proof}

The case where $F_0$ is the quadratic loss and $L=\Id$ was investigated in \cite{poon2021smooth} and it is straightforward in this case to see that $f_0$ is smooth. For more general $F_0$ and $L\neq \Id$, we have the following regularity result, which implies the differentiability of $f_0$. 

\begin{thm}[Differentiability]\label{thm:diff}
Assume that $F_0 \in \Cc^{1,1}(\RR^m; \RR)$.  Then,
 $f$ differentiable for all $v\in\RR^n$ with $\nabla f(v) =v- v \odot \alpha_v^2$ where $(\alpha_v, \xi_v)\in \argmin_{\alpha,\xi}\phi(v,\al,\xi)$. Moreover, if $v_i\neq 0$ for all $i$, then $f$ is strictly differentiable .
\end{thm}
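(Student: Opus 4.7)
The strategy is a Danskin-type envelope argument on the dual max formulation of Proposition~\ref{prop:gentv}. Writing $h(v,\alpha,\xi)\eqdef\tfrac12\norm{v}^2-\tfrac12\norm{v\odot\alpha}^2-F_0^*(\xi)$, so that $f(v)=\max_{(\alpha,\xi)\in K}h(v,\alpha,\xi)$, the map $h(\cdot,\alpha,\xi)$ is smooth with $\nabla_v h=v-v\odot\alpha^2$, which makes the formula in the statement the natural candidate. Two things need verifying: that this candidate is unambiguous across the set of dual maximizers, and that the one-sided envelope inequalities pinch it out.

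For the unambiguity, the Lipschitz assumption on $\nabla F_0$ makes $F_0^*$ strongly convex (Baillon--Haddad), hence $\xi_v$ is unique at any dual maximizer; the constraint $L^\top\alpha=-A^\top\xi_v$ then fixes $L^\top\alpha$, and strict convexity of $\norm{\cdot}^2$ applied on that affine slice forces $v\odot\alpha_v$ to be unique. Coordinate-wise: for $g$ with $v_g\neq 0$ the block $\alpha_{v,g}$ is pinned down, whereas for $v_g=0$ the contribution $v_g\norm{\alpha_{v,g}}^2$ vanishes identically; in either case $(v\odot\alpha_v^2)_g$ is well-defined.

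The key to the envelope argument is the \emph{exact} identity
\begin{equation*}
h(v+\delta,\alpha,\xi)-h(v,\alpha,\xi)=\langle\delta,\,v-v\odot\alpha^2\rangle+\tfrac12\sum_g\delta_g^2\bigl(1-\norm{\alpha_g}^2\bigr),
\end{equation*}
obtained by direct expansion of the squared norms. Taking $\alpha=\alpha_v$ and using $f(v+\delta)\geq h(v+\delta,y_v)$ gives $f(v+\delta)-f(v)\geq\langle\delta,v-v\odot\alpha_v^2\rangle+O(\norm\delta^2)$. For the reverse inequality, let $y_\delta=(\alpha_\delta,\xi_\delta)$ be any maximizer at $v+\delta$; the identity applied with $\alpha=\alpha_\delta$, together with $h(v,y_\delta)\leq f(v)$ and the crucial observation that the remainder $-\tfrac12\sum_g\delta_g^2\norm{\alpha_{\delta,g}}^2$ is nonpositive and can simply be dropped, yields $f(v+\delta)-f(v)\leq\langle\delta,v-v\odot\alpha_\delta^2\rangle+\tfrac12\norm\delta^2$. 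This sign trick bypasses what would otherwise be the main obstacle of the proof, namely that $\alpha_\delta$ may blow up on coordinates where $v_g=0$. Closing the gap then reduces to showing $\langle\delta,v\odot(\alpha_\delta^2-\alpha_v^2)\rangle=o(\norm\delta)$: components with $v_g=0$ contribute zero, and for $v_g\neq 0$ this follows from $\alpha_{\delta,g}\to\alpha_{v,g}$, obtained by the compactness/subsequence argument of the well-posedness proposition ($\xi_\delta$ is bounded by strong concavity, $(v+\delta)\odot\alpha_\delta$ is bounded by continuity of $f$, any limit point is identified with $(\xi_v,v\odot\alpha_v)$ via the uniqueness step above, and division by $(v+\delta)_g\to v_g\neq 0$ recovers $\alpha_{v,g}$).

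For strict differentiability when all $v_i\neq 0$, the previous uniqueness of $v\odot\alpha_v$ upgrades to uniqueness of $\alpha_v$ itself, and the same subsequence argument applied to a sequence $v_n\to v$ (which stays in the open set $\{v:v_i\neq 0\;\forall i\}$) yields $\alpha_{v_n}\to\alpha_v$, so $\nabla f$ is continuous on this open set---equivalent to strict differentiability. The main obstacle throughout is the possible unboundedness of $\alpha_\delta$ in groups where $v_g$ vanishes; its resolution is the exact identity above combined with the favorable sign of the leading blow-up term, which turns a potential failure of Taylor-type estimates into an automatic one-sided bound.
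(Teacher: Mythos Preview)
Your argument is correct and follows a somewhat different, more elementary route than the paper. The paper splits the proof into two pieces: for $v$ with all nonzero entries it invokes the lower-$\Cc^1$ envelope theorem of Rockafellar--Wets (after showing the max can be restricted to a compact set), while for general $v$ it passes through semidifferentiability, first establishing Lipschitz continuity of $f_0$ restricted to $\Supp(v)$, then continuity of $(\alpha_w)_{\Supp(v)}$ via a strong-concavity estimate, and finally computing the semiderivative by comparing $f_0(v+tw_t)$ to $f_0(v+t(w_t)_{\Supp(v)})$. Your approach avoids both the Rockafellar--Wets machinery and the semiderivative detour by working directly with the exact quadratic expansion of $h(\cdot,\alpha,\xi)$ and exploiting the sign of the remainder $-\tfrac12\sum_g\delta_g^2\norm{\alpha_{\delta,g}}^2$; this is essentially the same sign observation the paper uses in the inequality $f_0(v+tw)\leq f_0(v+t(w_t)_S)$, but packaged more transparently. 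The payoff is a unified treatment of both the degenerate and non-degenerate cases; the cost is that the continuity-of-maximizers step is only sketched, whereas the paper's strong-concavity computation gives an explicit quantitative bound $\norm{(\alpha_v-\alpha_w)_S}^2\lesssim\norm{v-w}$.

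One small point to tighten: you justify the boundedness of $(v+\delta)\odot\alpha_\delta$ by ``continuity of $f$'', which has not yet been established at that stage. The clean fix is either to use your own lower envelope bound (which already gives $f_0(v+\delta)\geq f_0(v)-C\norm{\delta}$, hence a lower bound on $-\tfrac12\norm{(v+\delta)\odot\alpha_\delta}^2-F_0^*(\xi_\delta)$ and therefore an upper bound on $\norm{(v+\delta)\odot\alpha_\delta}$), or, as the paper does, to compare $\phi(v+\delta,\alpha_\delta,\xi_\delta)\geq\phi(v+\delta,0,0)=-F_0^*(0)$ and combine with strong convexity of $F_0^*$. Either route closes the apparent circularity. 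Your identification of limit points with $(\xi_v,v\odot\alpha_v)$ also implicitly uses that the limiting value equals $f_0(v)$; this follows from the sandwich $\liminf f_0(v+\delta_n)\geq f_0(v)$ (your lower bound) together with $\limsup f_0(v+\delta_n)\leq -\tfrac12\norm{(w_*)_S}^2-F_0^*(\xi_*)\leq f_0(v)$, and is worth stating explicitly.
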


Note that even through $\phi(v,\cdot,\cdot)$ does not necessarily have unique maximisers, $v\odot \alpha_v$ is uniquely defined due to the quadratic term $\norm{v\odot \alpha_v}^2$ in $\phi$, so $\alpha_v$ is unique on the support of $v$ and hence, the formula given for $\nabla f$ in the above theorem is clearly well-defined.

From the above result, we see that the computation of $\nabla f(v) = v- v\odot \alpha_v^2$ simply requires solving the inner problem  $\max_{\al,\xi} \phi(v,\al,\xi)$ to obtain $\alpha_v$. Before proving this theorem in Section \ref{proofs}, we first make some remarks on  the computation of the $\alpha_v$ and provide numerical examples.

\subsection{Squared Euclidean loss}

The inner problem  $\max_{\al,\xi} \phi(v,\al,\xi)$ is a convex optimisation problem, which might require a dedicated inner solver. In the remain part of this section, we focus on the setting where 
$$
F_0(z) = \frac{1}{2\lambda} \norm{z-y}_2^2, \qwhereq y\in \RR^m.
$$
In this case, the inner maximisation problem is a least squares problem.
In the case where $v_i\neq 0$ for all $i$, this can be further simplified, by first rewriting~\ref{eq:linear_system_general} as 
\begin{equation}\label{eq:linsys}
\begin{pmatrix}
A^\top A &  L^\top\\
L &-\diag({\bar v}^2)
\end{pmatrix} \begin{pmatrix}
x\\
\alpha
\end{pmatrix} = \begin{pmatrix}
A^\top y\\
0
\end{pmatrix}.
\end{equation}
Equivalently, we have \begin{equation}\label{eq:toinvert}
A^\top Ax + \lambda L^\top  \diag(1/\bar v^2) L x  = A^\top y
\end{equation}
and let $\bar v^2 \cdot \alpha =  Lx$ and $\xi = \frac{1}{\lambda}(Ax- y)$.

\subsubsection{Group-Lasso setting}

In the case where $L=\Id$, which was studied in\cite{poon2021smooth}, the inner problem can be written as
$$
f_0(v) = \max_{\xi\in\RR^m}  \frac12 \norm{v}^2 - \frac12 \norm{v\odot (A^\top \xi)}^2 - \frac{\lambda}{2} \norm{\xi}_2^2 - \dotp{\xi}{  y }.
$$
The maximiser is the solution to the linear system
$$
(A \diag(v^2) A^\top + \lambda\Id) g= -y.
$$

\subsubsection{Proximal operators}

When $X= \Id$, one has
$$
f_0(v) = \max_{\alpha\in\RR^p} \frac12 \norm{v}^2 - \frac12 \norm{v\odot \alpha}^2 - \frac{\lambda}{2} \norm{L^\top \alpha}_2^2 - \dotp{L^\top \alpha}{  y } 
$$
The maximiser is the solution to the linear system
$$
\pa{\lambda LL^\top + \diag(v^2)} \alpha  =L y
\qandq
\xi  =  y  -  \lambda L^\top \alpha .
$$

\subsubsection{The overlapping group Lasso}

Complex block-sparse patterns can be favored in the solution using an operator $L$ which extracts blocks of the vector $x$~\cite{bach2011optimization}.
We set $L: x \mapsto (\sqrt{n_g}x_{I_g})_{g\in\Gg}$ where $I_g\subseteq \ens{1,\ldots, n}$, $n_g = \abs{I_g}$. This induces the regularizer 
$\norm{L x}_{2,1} = \sum_{g\in\Gg} \norm{x_{I_g}}$.
If the groups span the entire index set, that is $\bigcup_{g\in \Gg} I_g, = \ens{1,\ldots, n}$, then $W\eqdef L^\top \diag(1/\bar v^2) L$ is a diagonal matrix with $W_{i,i} =\sum_{g\in \Gg_i}  v_g^{-2} {n_g}$ where $\Gg_i = \enscond{g\in\Gg}{i\in g}$.
 We can therefore conveniently rewrite \eqref{eq:toinvert}  leveraging
\begin{align*}
 \pa{A^\top A  + \lambda L^\top  \diag(1/\bar v^2) L }^{-1}
= \frac{1}{\lambda} W^{-1} - \frac{1}{\la} W^{-1}  A^\top\pa{\lambda \Id_m +A W^{-1} A^\top}^{-1} A W^{-1}.
\end{align*}
This formulation is advantageous in the under-determined setting where $m\ll n$.

The performance of VarPro is illustrated in  figure \ref{fig:overlap_group}, where we apply L-BFGS quasi-Newton to minimise $f$. 
The top row  of figure \ref{fig:overlap_group} shows the results when $A\in \RR^{m\times n}$ is a random Gaussian matrix with $m=300$ and $n=3000$. The groups are chosen such that they have an overlap of $s$ and the size of each group  is chosen at random from 1 to 20. The results  shown in the figure are for different regularisation parameters. In the bottom row of Figure \ref{fig:overlap_group}, we show the results on the breast cancer dataset \cite{van2002gene} commonly used for benchmarking the group Lasso \cite{obozinski2011group} \footnote{We used the data matrix downloaded from \url{https://github.com/samdavanloo/ProxLOG}}. We compare our method against ADMM with different parameters (see Section \ref{sec:admm} in the appendix).
\begin{figure}

\begin{tabular}{c@{\hspace{1pt}}c@{\hspace{1pt}}c@{\hspace{1pt}}c}
$\frac{\lambda_{\max}}{2}$&$\frac{\lambda_{\max}}{10}$&$\frac{\lambda_{\max}}{20}$\\
\includegraphics[width=0.32\linewidth]{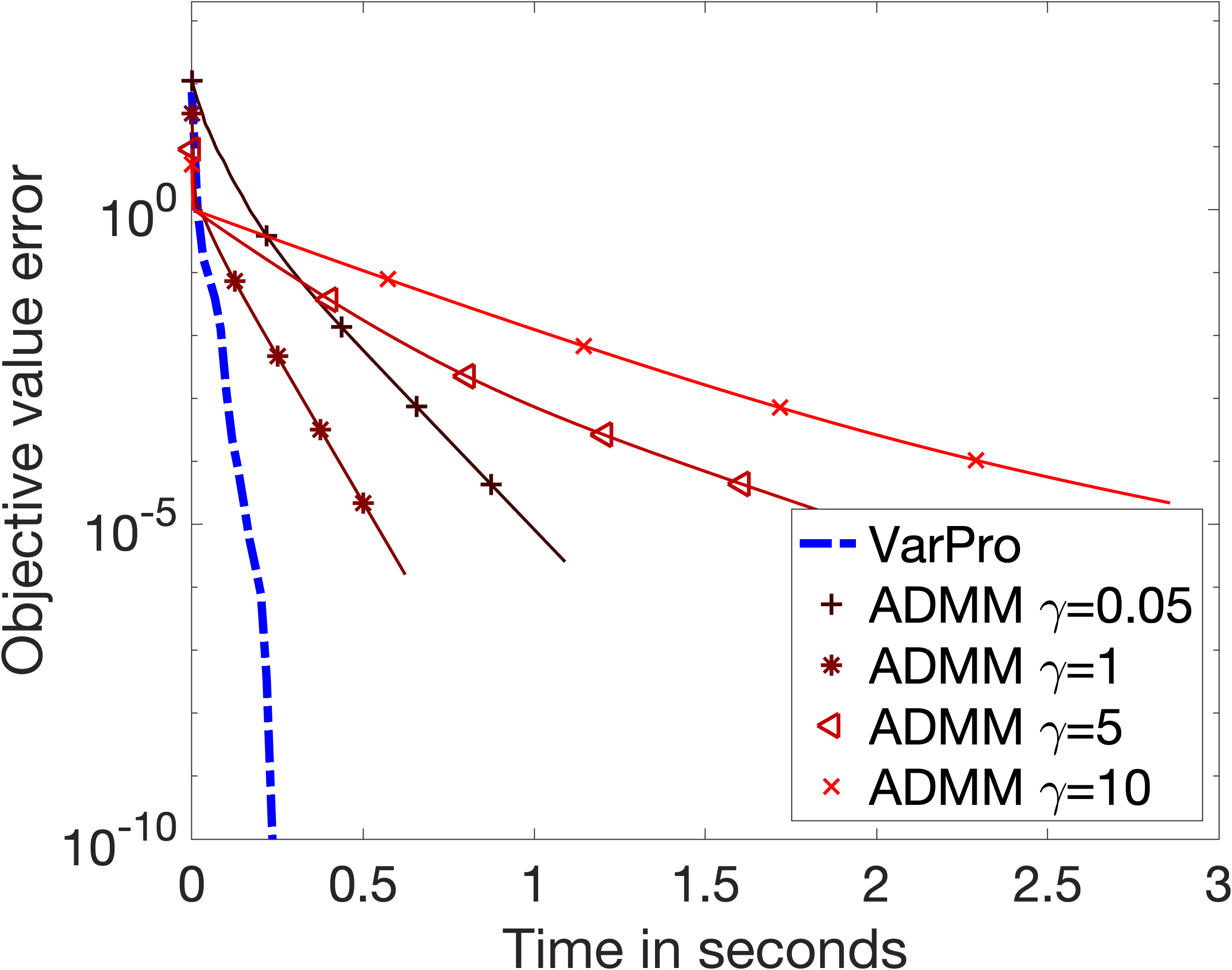}&
\includegraphics[width=0.32\linewidth]{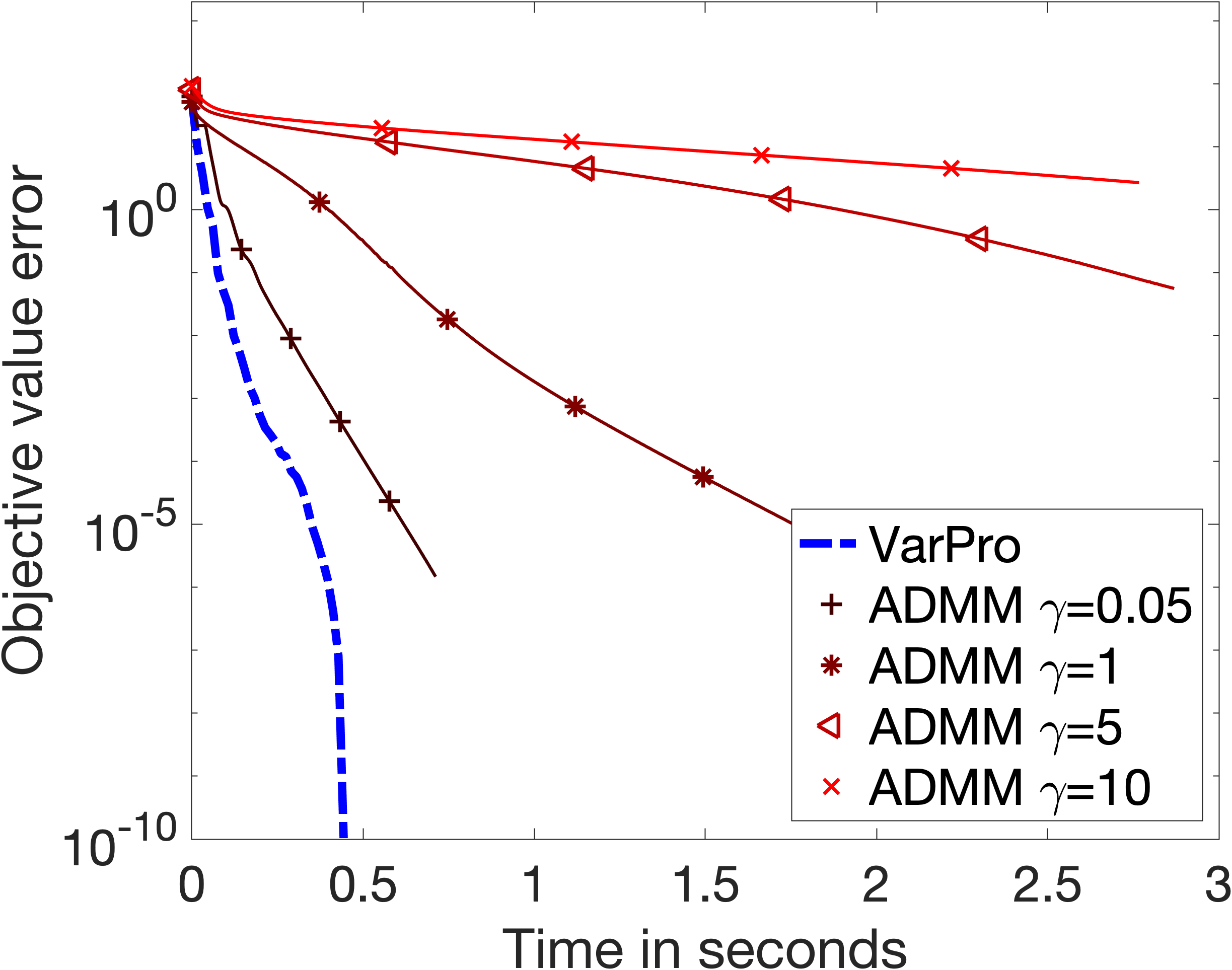}
&
\includegraphics[width=0.32\linewidth]{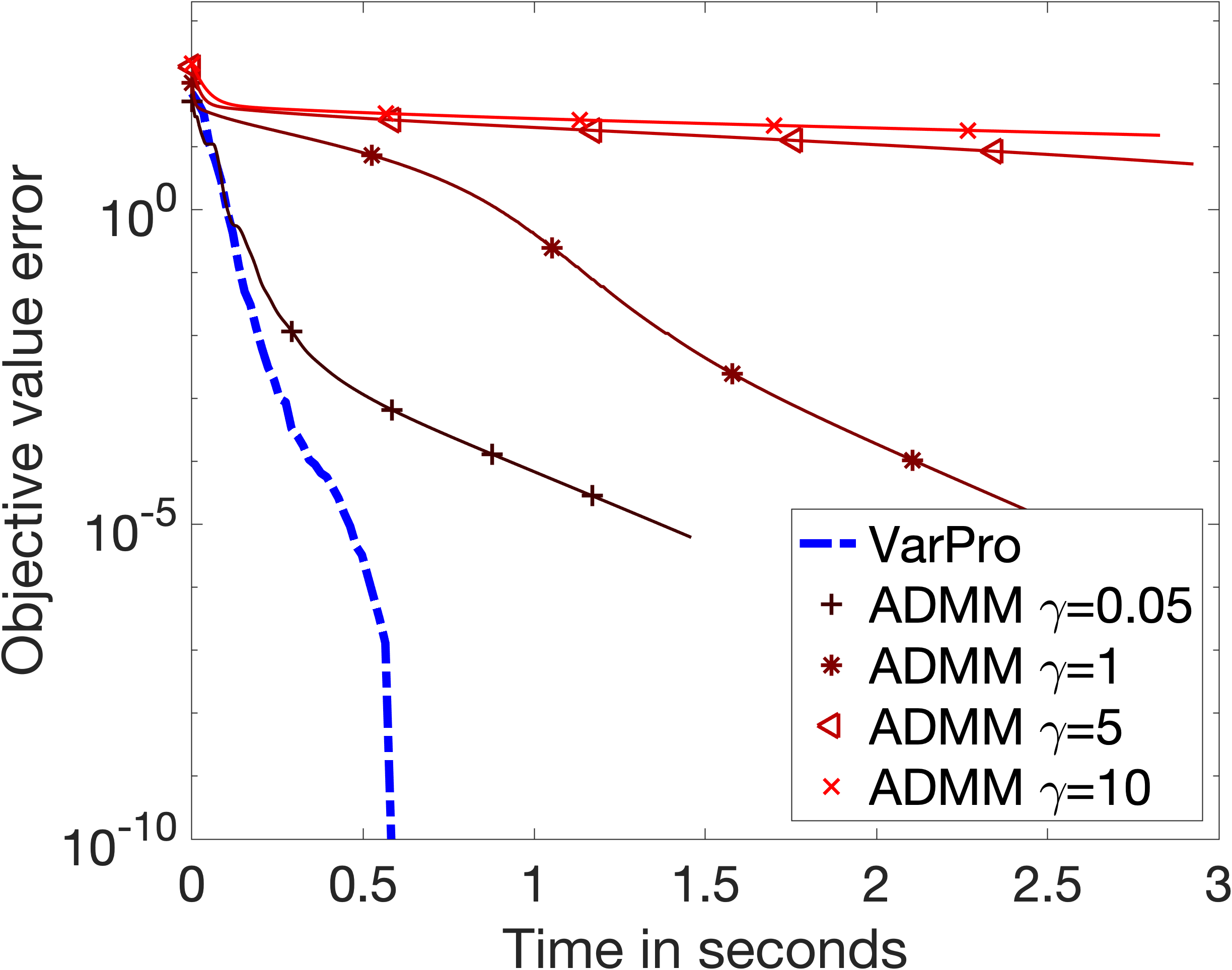}\\
\includegraphics[width=0.32\linewidth]{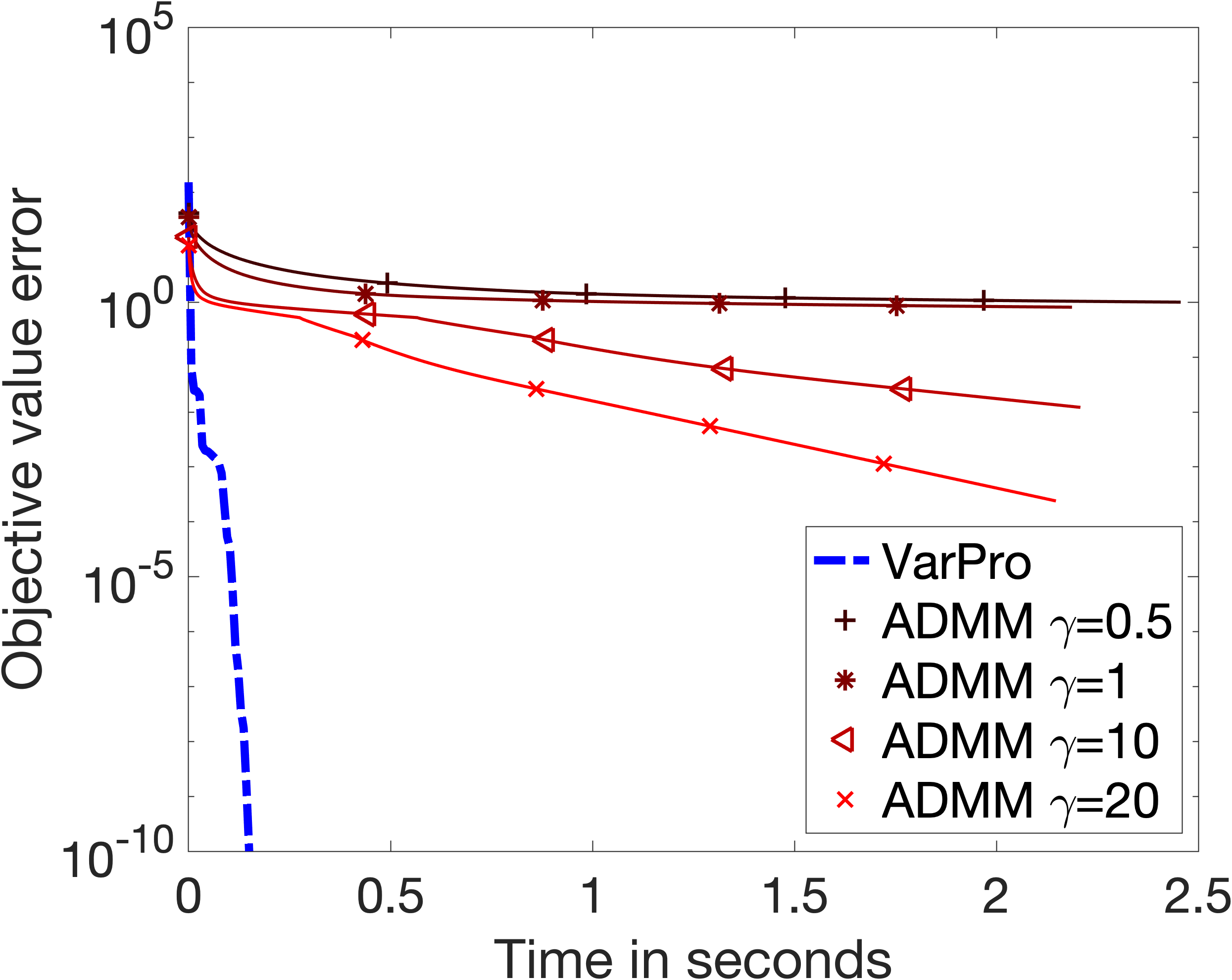}&
\includegraphics[width=0.32\linewidth]{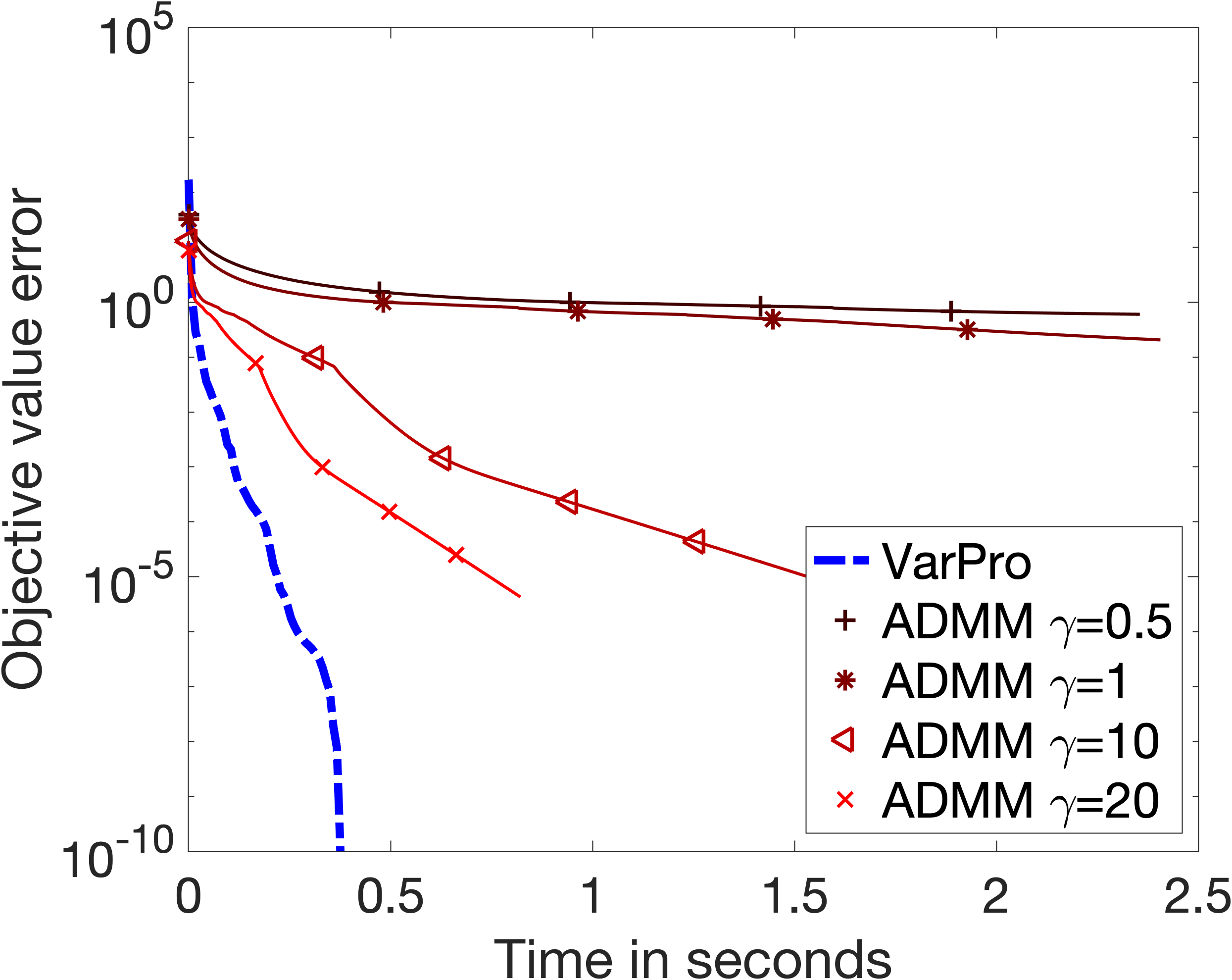}
&
\includegraphics[width=0.32\linewidth]{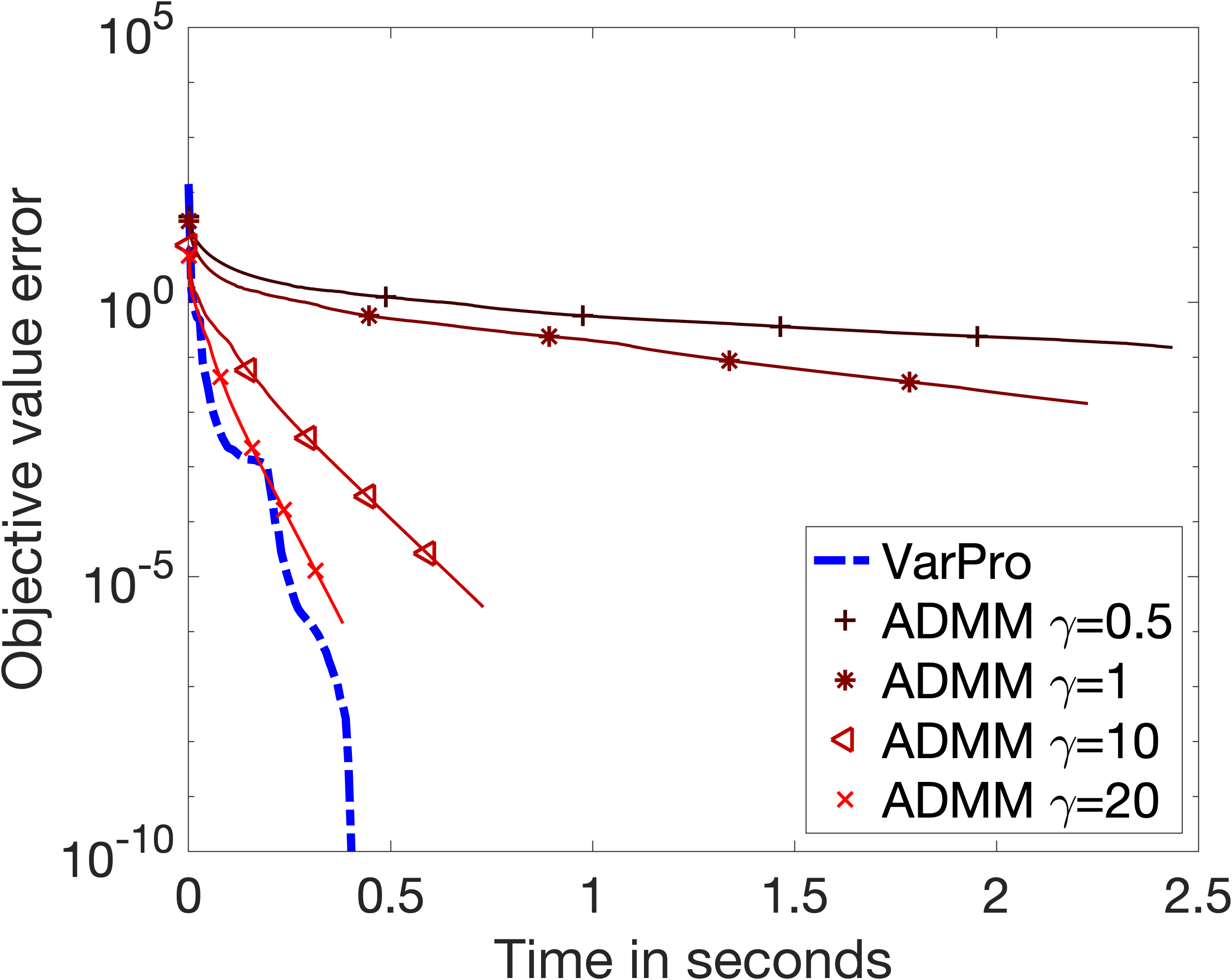}
\end{tabular}
\caption{Overlapping group lasso. Top row:  $A\in\RR^{300\times 3000}$ is a   random Gaussian matrix and the sought after vector has group overlaps of 5. Bottom row: $A$ is the  the breast cancer data matrix of \cite{van2002gene}. \label{fig:overlap_group}}
\end{figure}

\subsubsection{Total variation regularisation for image processing}\label{sec:TV}

We identify images $x \in \RR^n$ with 2-D arrays $x\in \RR^{d\times d}$ so that $n=d^2$. 
Horizontal and vertical derivative operators are defined as
$$
D^h x \eqdef (x_{i,j} - x_{i+1,j})_{i,j}, \qandq  D^v x \eqdef (x_{i,j} - x_{i,j+1})_{i,j}.
$$
and the 2 dimensional gradient operator acting on images $x\in\RR^{d\times d}$ is 
$$
Lx \eqdef (D^h x, D^v x).
$$
Given $T$ images $x= \pa{x^t}_{t=1}^T\in \RR^{d\times d \times T}$, we can consider the multi-channel total variation regularisation function by defining
$$
L:\RR^{d\times d\times T} \to \RR^{d\times d\times 2T}, \quad Lx = \pa{(D^h x^t, D^v x^t)}_{t=1}^T
$$
$$
\qandq R(x) = \norm{Lx}_{2,1} = \sum_{i,j=1}^d   \sqrt{ \sum_{t=1}^T \pa{D^h x^t}_{i,j}^2 + \pa{D^v x^t}_{i,j}^2 }
$$
Suppose also that $A:\RR^{d\times d\times T}\to\RR^{\sum_{t=1}^T m_t}$ is of the form $A(x) = \pa{A_t(x_t)}_{t=1}^T$ where $A_t:\RR^{d\times d} \to \RR^{m_t}$ is a linear operator and $y = (y_t)_{t=1}^T$ with $y_t\in\RR^{m_t}$ are the observations.  In this case, the function $f$ is defined over $v\in\RR^{d\times d}$ and the linear system in \eqref{eq:linsys} can be written as $T$ linear systems,
\begin{align*}
\begin{pmatrix}
A_t^\top A_t &  L^\top\\
L &- \begin{pmatrix}
\diag(v_t^2)& 0\\
0&\diag(v_t)^2
\end{pmatrix} 
\end{pmatrix} \begin{pmatrix}
x_t\\
\alpha_t
\end{pmatrix} = \begin{pmatrix}
A_t^\top y_t\\
0
\end{pmatrix}.
\end{align*}
These linear systems can be placed in the form \eqref{eq:toinvert} and solved simultaneously. Two popular settings for image processing with total variation is  denoising  where $A=\Id$ and  inpainting where $A$ is a masking operator.

We compare the VarPro formulation (using a BFGS solver) against Primal-Dual and ADMM for the following two problems:
\begin{itemize}
\item[(i)] Total variation inpainting of colour images, where we let $T=3$ and for $x = (x^1, x^2, x^3)\in \RR^{d\times d \times 3}$, each $x^i$ correspond to one of  three colour channels. The operator $A x= x_J$ is a subsampling operator, where $J$ is an index set selecting 30\% of the pixels at random.
Figures \ref{fig:hyper} and~\ref{fig:tv-display} (rows 1 and 2), show convergence curves and examples of reconstructions.
\item[(ii)] Hyperspectral imaging. We consider total variation denoising on the Indian pines dataset \footnote{Dataset downloaded from \url{http://www.ehu.eus/ccwintco/index.php/Hyperspectral_Remote_Sensing_Scenes}}. Here, $A=\Id$ and  the input data is normalized to take values between 0 and 1. This dataset is of size $145\times 145$ with $T=224$ spectral reflectance bands. 
Figures \ref{fig:hyper} and~\ref{fig:tv-display} (row 3), show convergence curves and examples of reconstructions.
\end{itemize}
One of the advantages of our method is that one can simply plug our gradient formula for $f$ into any gradient based method such as quasi-Newton BFGS, without the need to tune extra parameters. Thus, although ADMM and Primal-Dual do show favourable performance for certain parameter choices, we found that the VarPro is more straightforward to apply.

\begin{figure}
\begin{tabular}{c@{\hspace{1pt}}c@{\hspace{1pt}}c}
$\lambda = 0.1$&$\lambda = 0.5$&$\lambda = 1$\\
\includegraphics[width=0.32\linewidth]{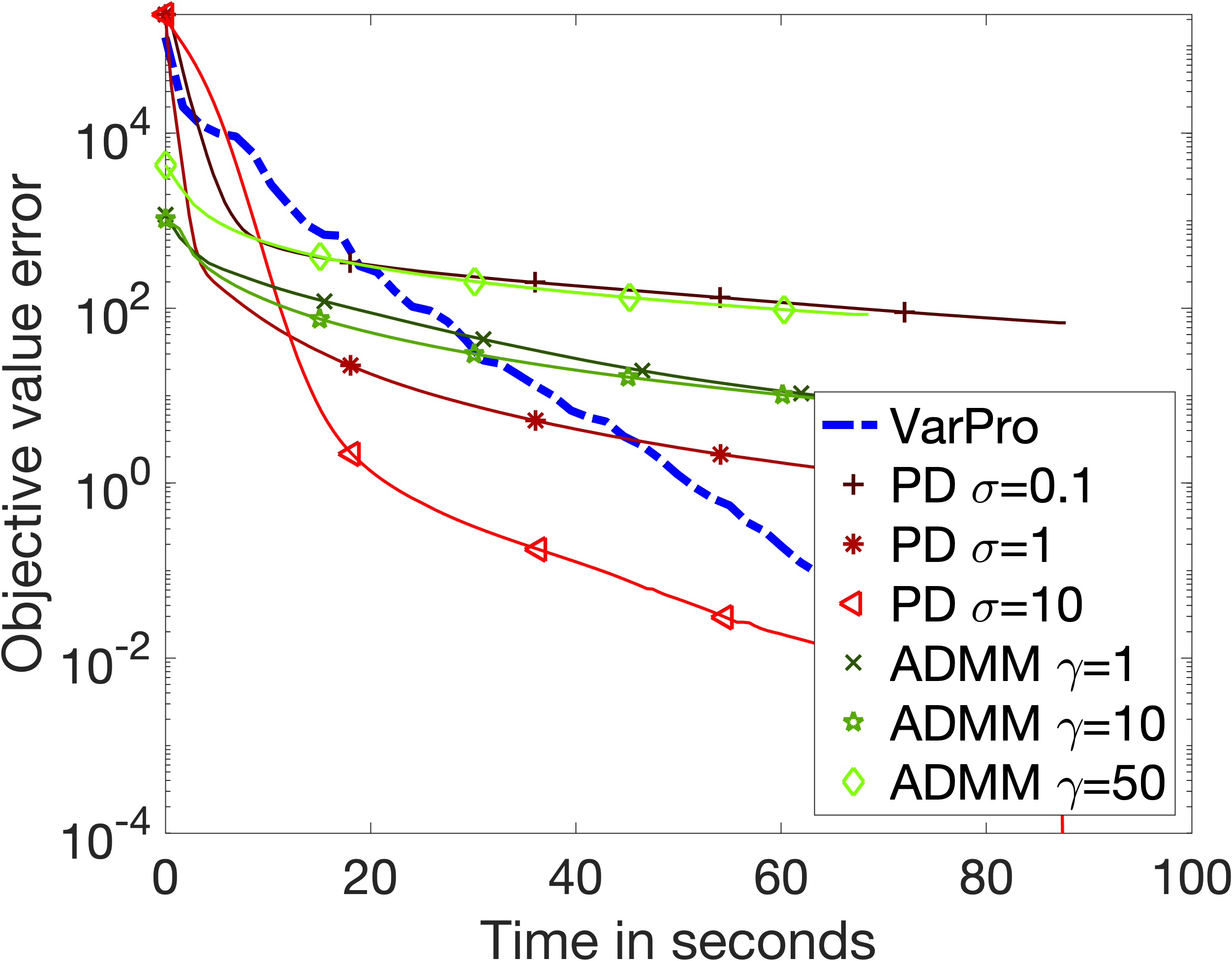}
&\includegraphics[width=0.32\linewidth]{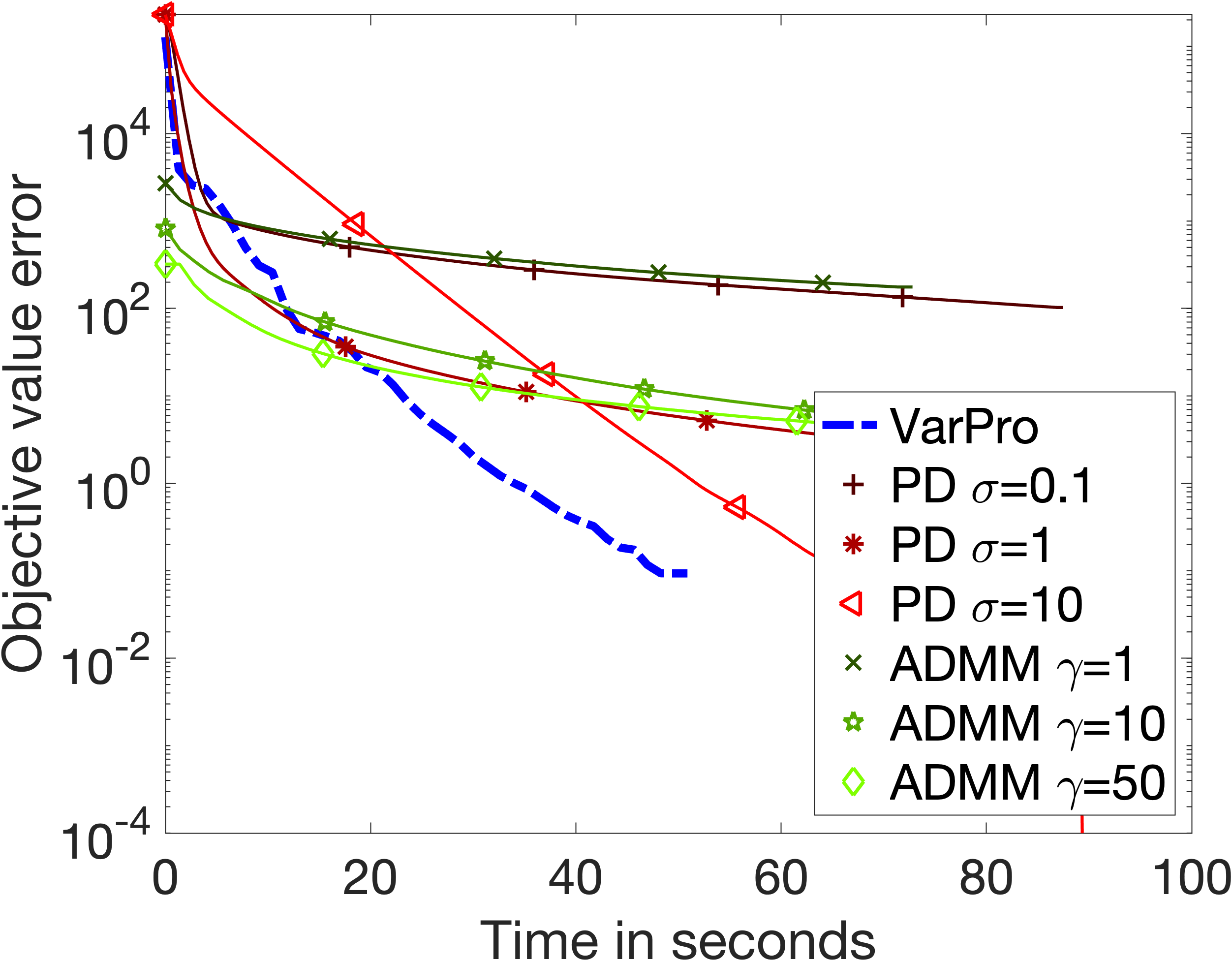}
&\includegraphics[width=0.32\linewidth]{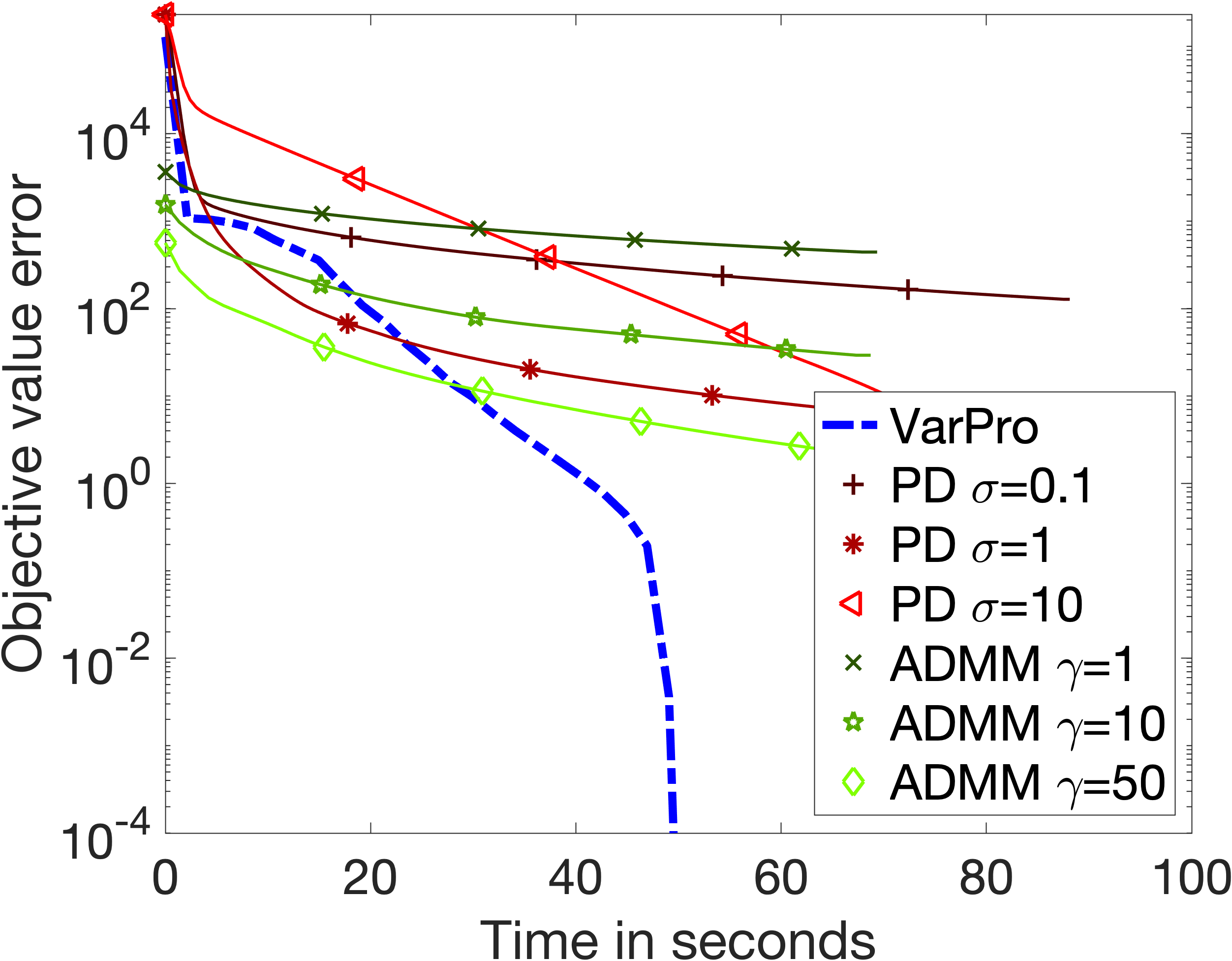}
\\
\includegraphics[width=0.32\linewidth]{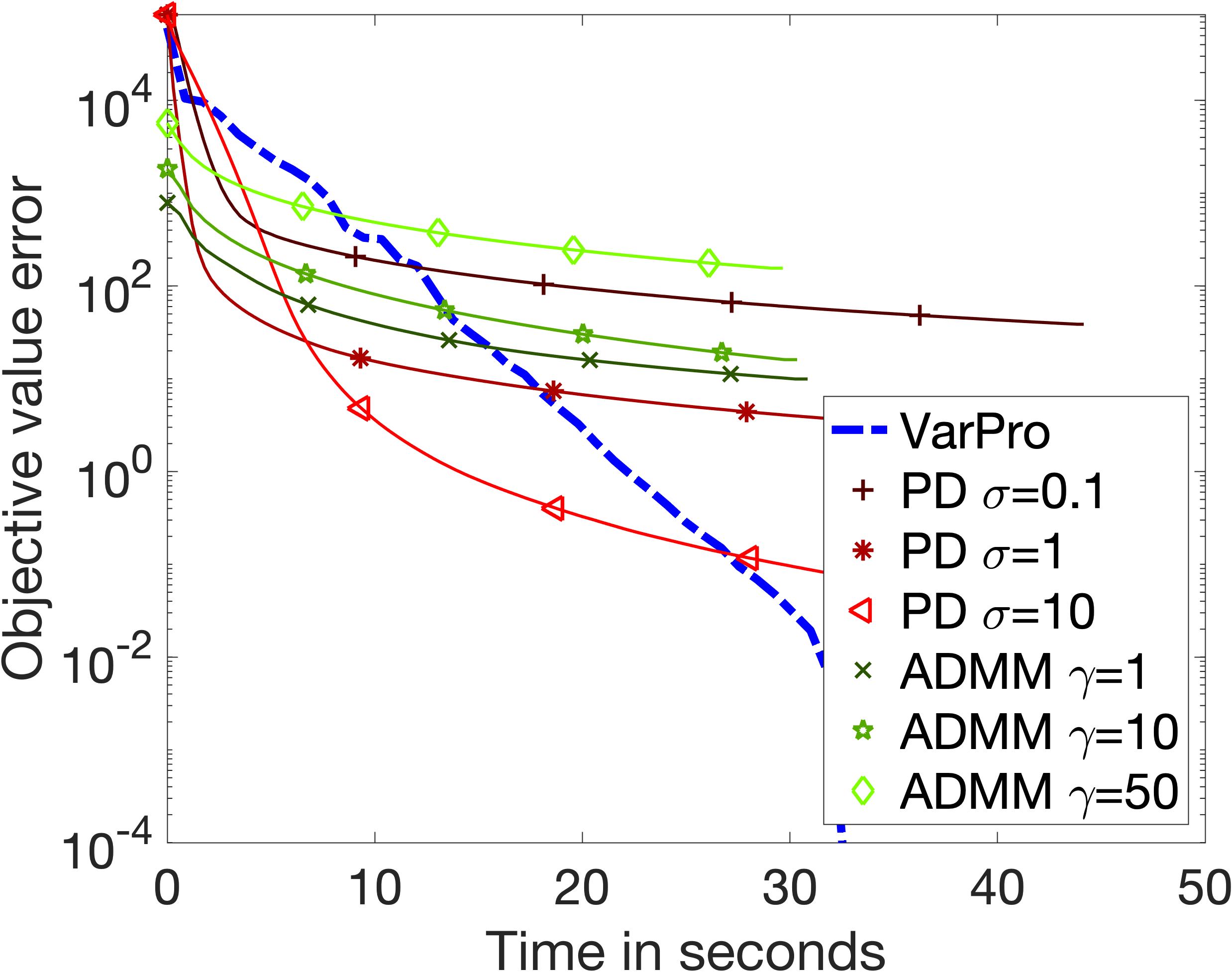}
&\includegraphics[width=0.32\linewidth]{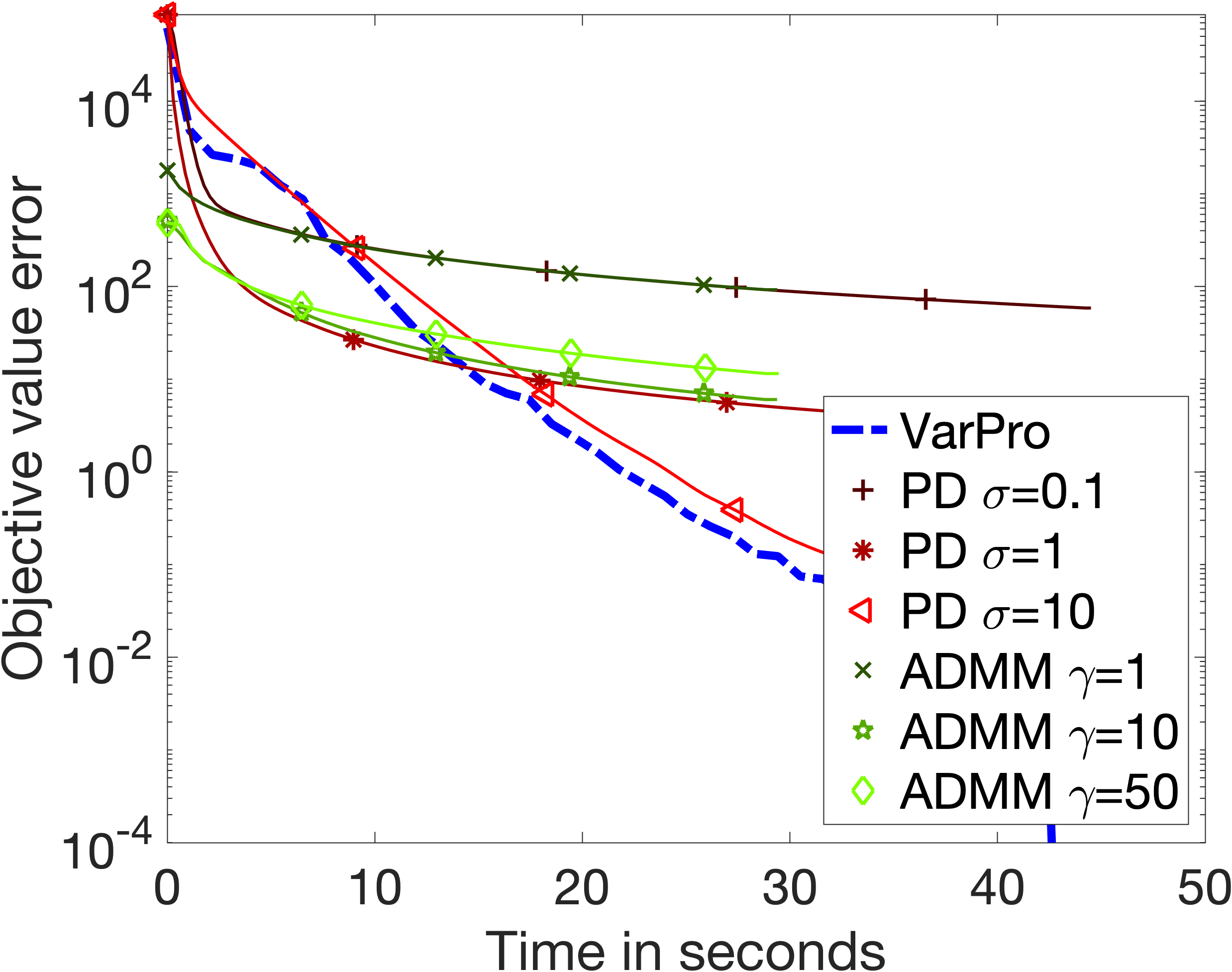}
&\includegraphics[width=0.32\linewidth]{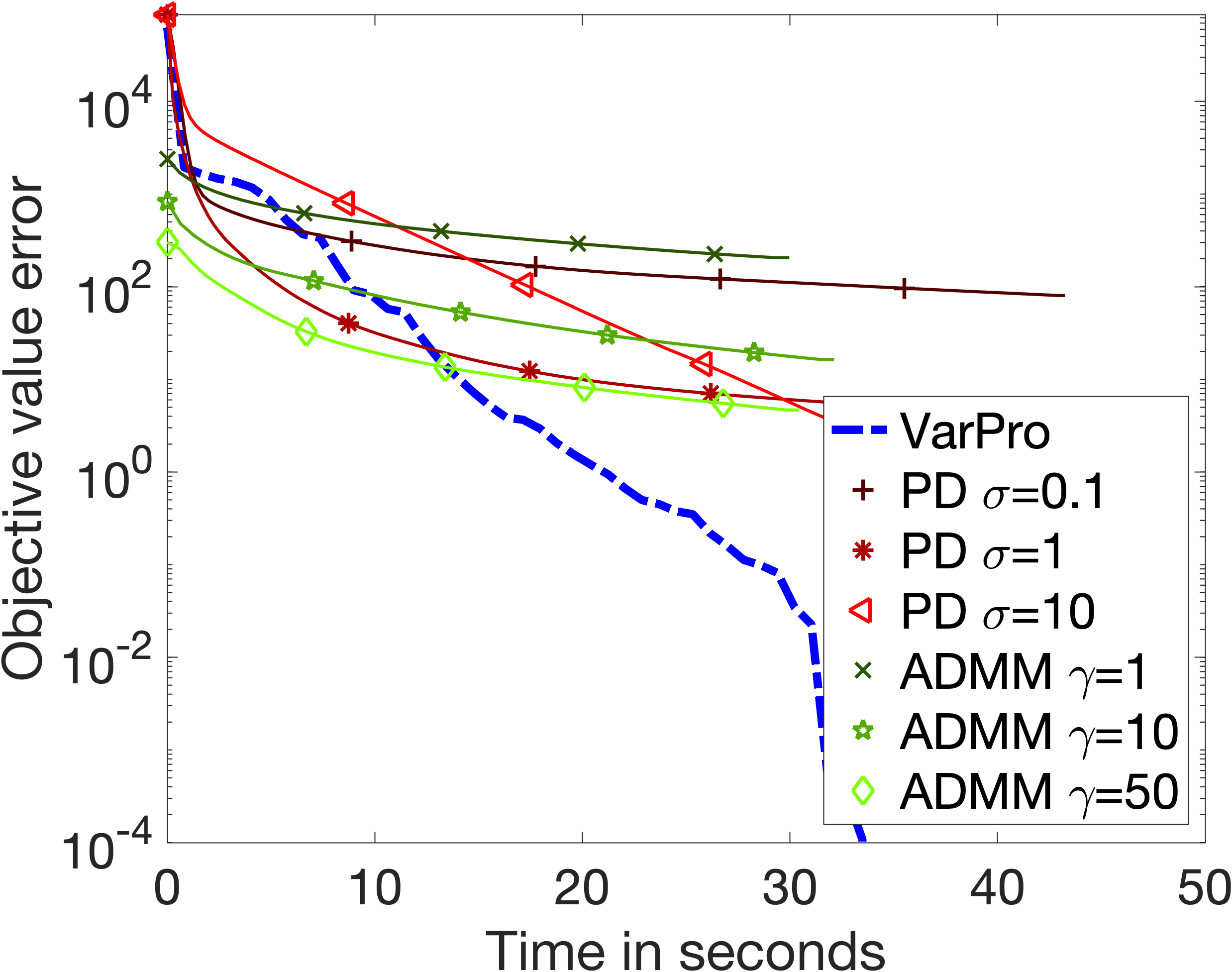}
\\
\includegraphics[width=0.32\linewidth]{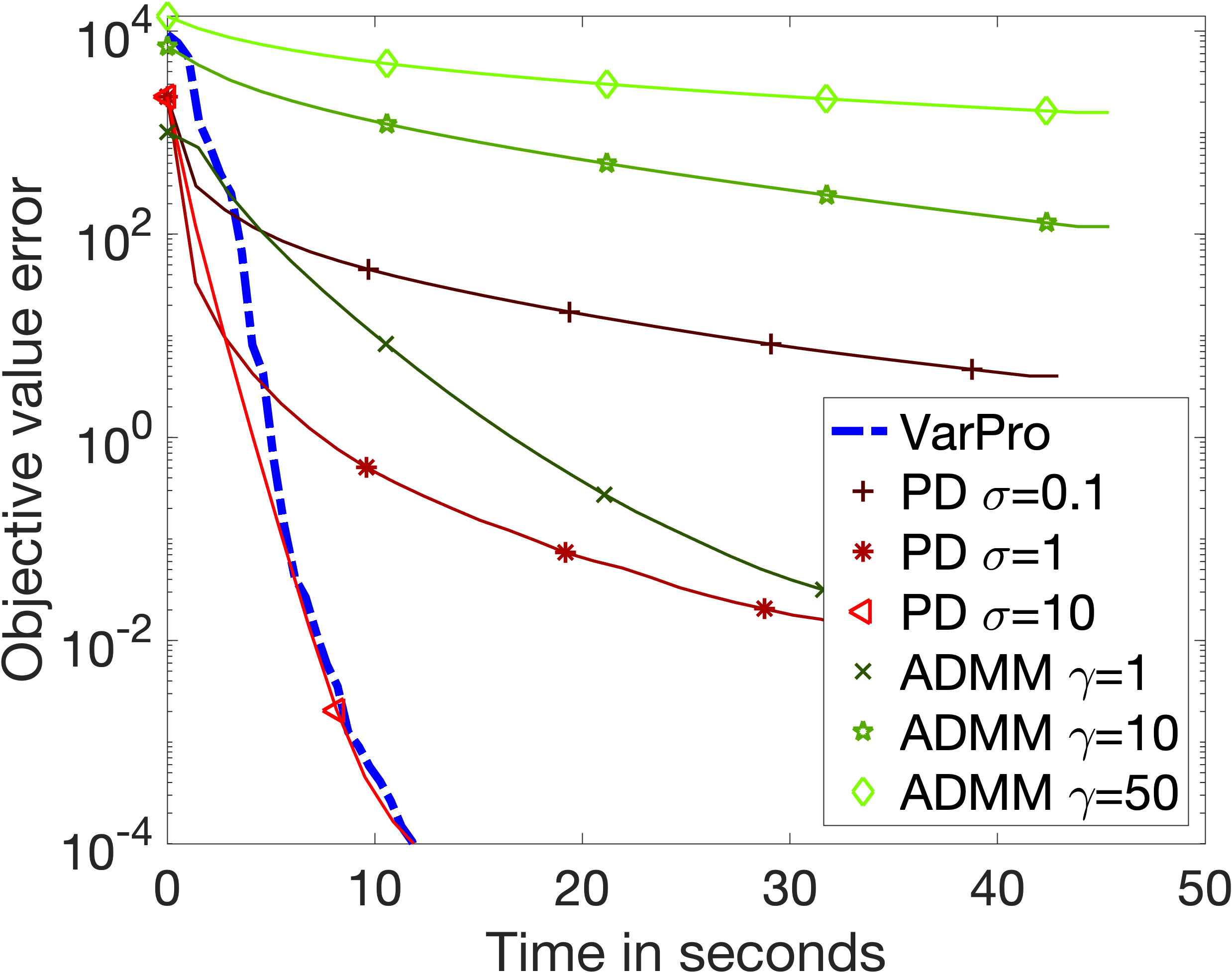}&
\includegraphics[width=0.32\linewidth]{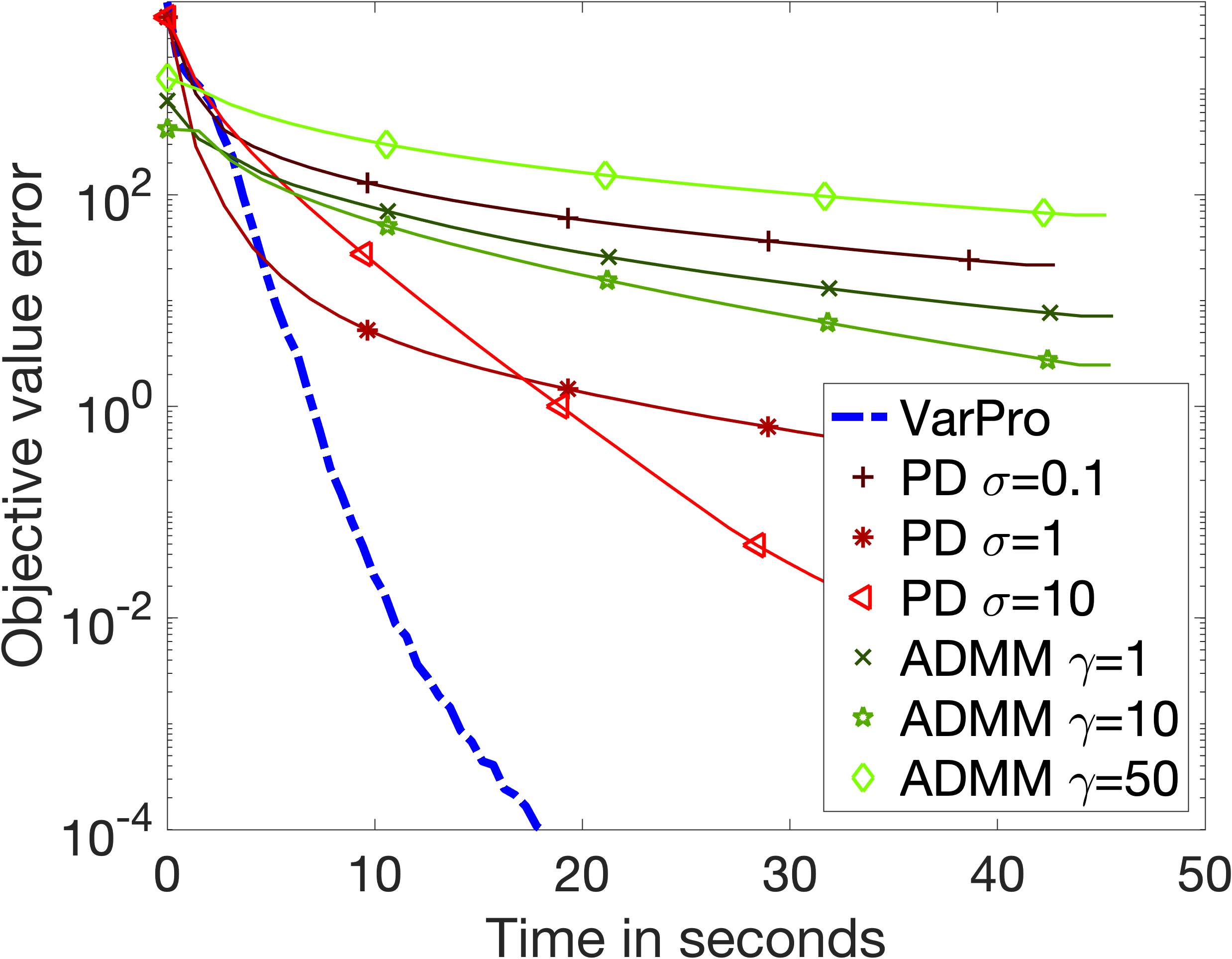}&
\includegraphics[width=0.32\linewidth]{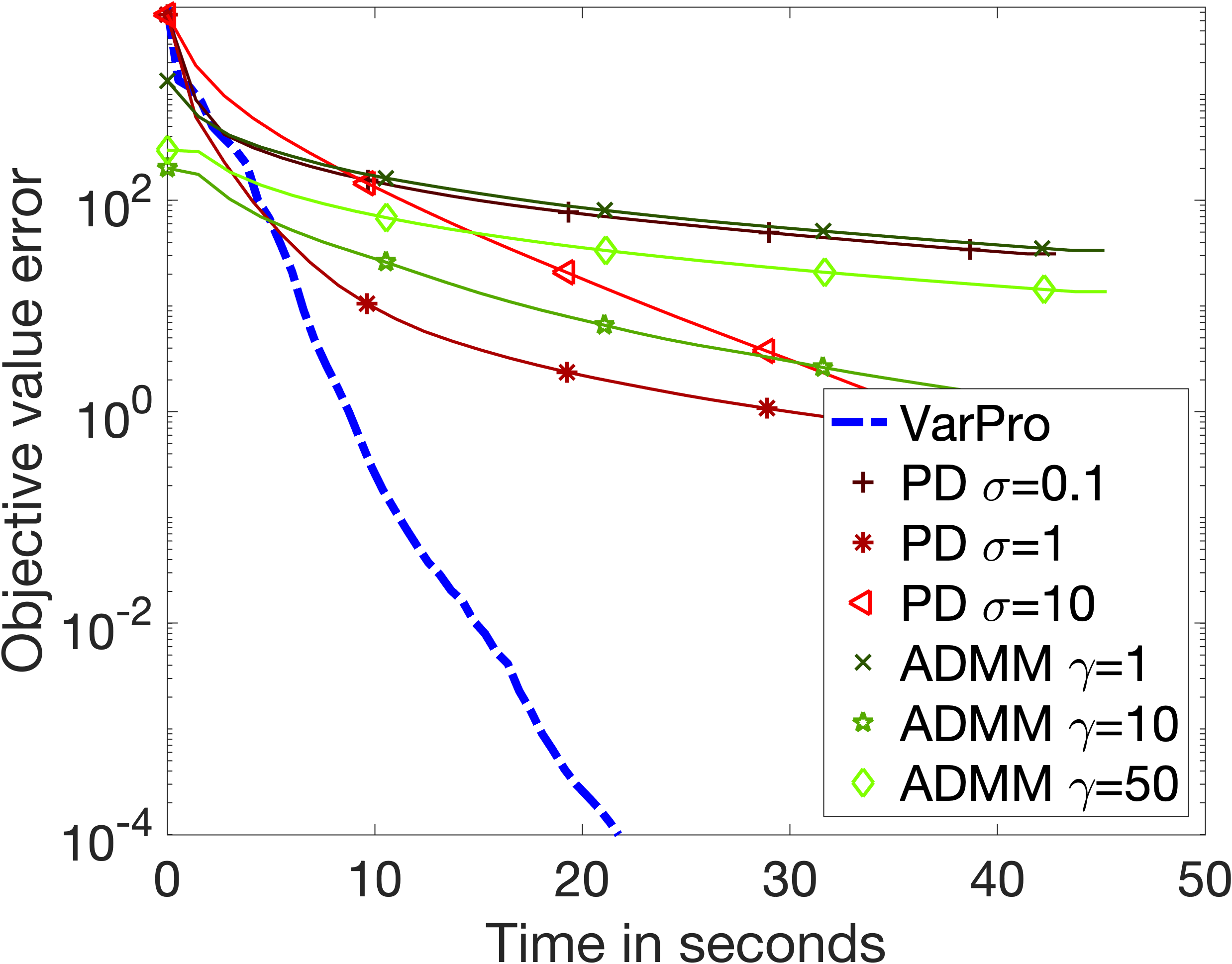}
\end{tabular}

\caption{
The top two rows show plots of the objective error against computational time in seconds for
2D vectorial TV colour inpainting with Matlab stock images  ``pears''  of    $486\times 732$ pixels (top row) and ``Peppers'' of $384\times 512$ pixels (middle row).  The regularisation of vectorial TV of $u = (u_1,u_2,u_3)$ is $J(u) = \sum_{i,j} \norm{\hat u_{i,j}}_2$ where $\hat u= (D_v(u_k),D_h(u_k))_{k=1,2,3}$ and so, $\hat u_{ij}$ is a vector of length 6, corresponding to the vertical and horizontal gradient of pixel $(i,j)$ of each of the 3 colour channels. Comparisons are against ADMM and Primal-Dual (PD). For Primal-Dual, the matrix inversion is trivial.  For ADMM, we need to invert $M=A^\top A + \gamma L^\top D$, for this inversion,  one  cholesky factorizaton is computed at the start of the iterations and is  re-used throughout.
The bottom row is a
Hyperspectral imaging example. Here, we perform total variation denoising with $A=\Id$ on the Indian Pines dataset. This consists of images of size 145 by 145, with 224 spectral reflectance bands. \label{fig:hyper}  }
\end{figure}

\begin{figure}
\begin{tabular}{c@{\hspace{1pt}}c@{\hspace{1pt}}c@{\hspace{1pt}}c}
$\lambda = 0.1$ & $\lambda = 0.5$&
$\lambda = 1.0$ & Input\\
\includegraphics[width=0.22\linewidth]{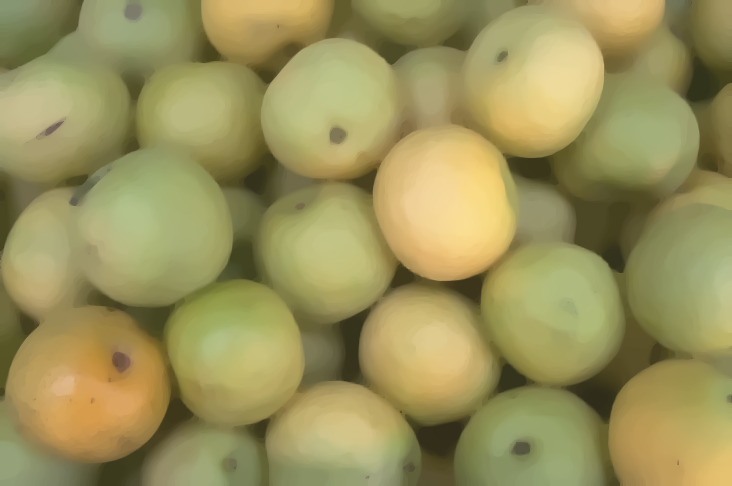}&
\includegraphics[width=0.22\linewidth]{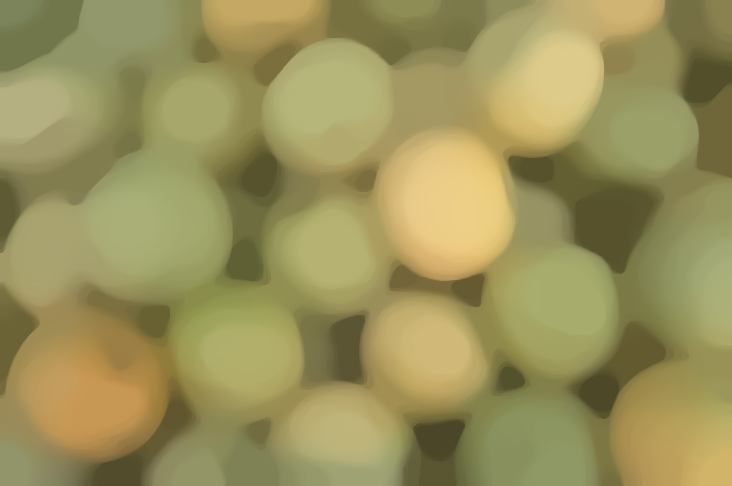}&
\includegraphics[width=0.22\linewidth]{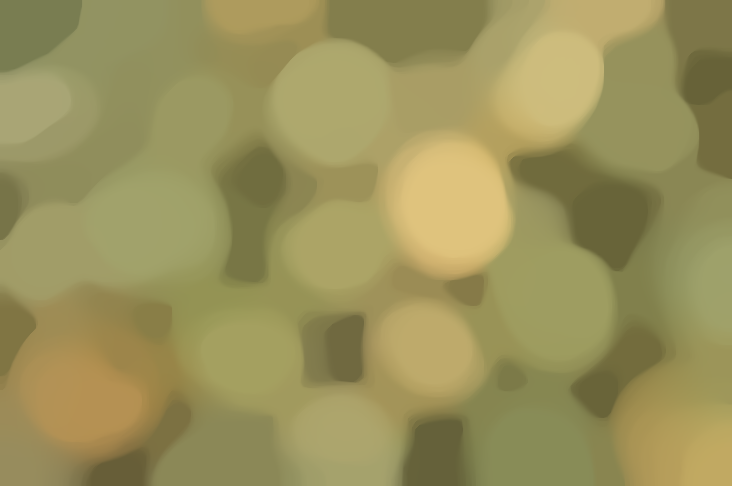}&
\includegraphics[width=0.22\linewidth]{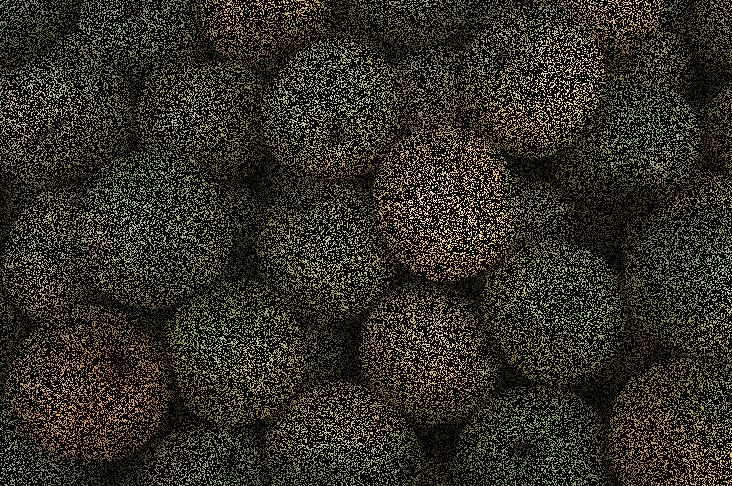}
\\
\includegraphics[width=0.22\linewidth]{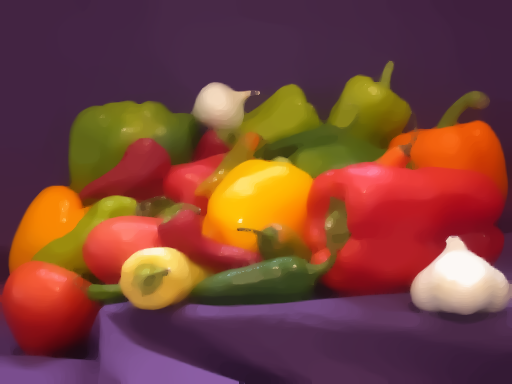}&
\includegraphics[width=0.22\linewidth]{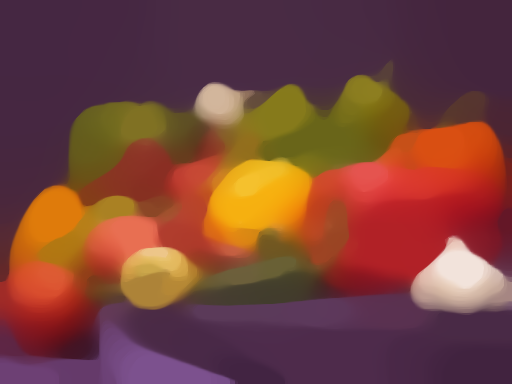}&
\includegraphics[width=0.22\linewidth]{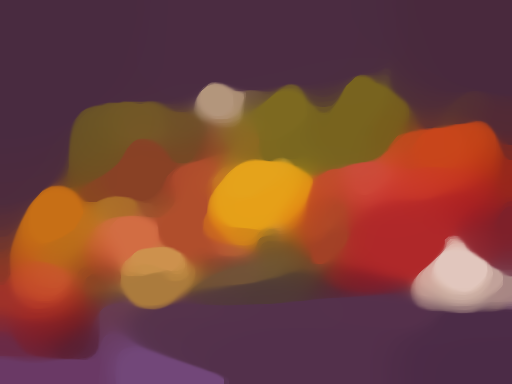}&
\includegraphics[width=0.22\linewidth]{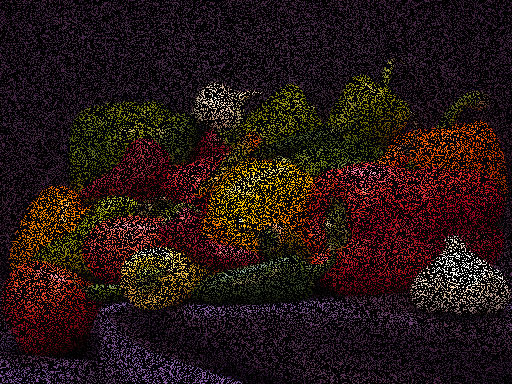}\\
\includegraphics[width=0.22\linewidth]{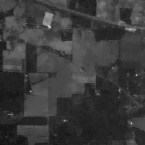}&
\includegraphics[width=0.22\linewidth]{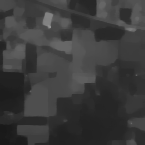}&
\includegraphics[width=0.22\linewidth]{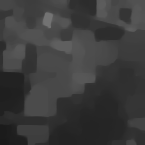}&
\includegraphics[width=0.22\linewidth]{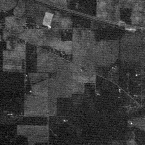}
\end{tabular}
\caption{  \label{fig:tv-display}
Top and middle row: color-TV reconstructions of ``pears''and ``peppers''. Bottom row: multi-channel TV reconstructions of the 3rd channel of the Indian Pines dataset.}
\end{figure}

\subsection{Proof of differentiability} \label{proofs}
In this section, we prove Theorem \ref{thm:diff}. 
In order to establish that $f$ is differentiable, we need to show that
 $$f_0(v) = \max_{\alpha,\xi} \phi(v,\xi,\alpha)$$
 is differentiable, where $\phi$ is defined in Proposition \ref{prop:gentv}. 
 We  first show that $f_0$ is strictly differentiable at $v$ if $v_i\neq 0$ for all $i$.

\textit{Notation:}
Throughout the proofs, given an index set $S$ and a vector $v$, let $v_S$ be the restriction of $v$ to the index set, such that $(v_S)_i = v_i$ for all $i\in S$ and $(v_S)_i = 0$ otherwise. Also, given a matrix $A$, $A_S$ denotes the restriction of the columns of $A$ by the indexes $S$.

\begin{prop}\label{prop:diff1}
 Let $v\in \RR^p$ and assume that $F_0$ is Lipschitz smooth. If  $v_i\neq 0$ for all $i$, then $f_0$ is strictly differentiable at $v$. 
\end{prop}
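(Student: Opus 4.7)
The plan is to apply an envelope theorem of Danskin type to the saddle representation of $f_0$. The hypothesis $v_i \neq 0$ for all $i$ is used both to establish uniqueness of the dual maximiser (which is what makes envelope arguments yield differentiability rather than only directional differentiability) and to obtain uniform coercivity in a neighbourhood of $v$, which is needed for continuity of the argmax map.

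First, I would verify that for $v$ with no vanishing coordinate, the maximiser $(\alpha_v,\xi_v)$ of $\phi(v,\cdot,\cdot)$ is unique. Indeed, since $F_0\in\Cc^{1,1}$, $F_0^*$ is strongly convex in $\xi$, and since $\min_i|v_i|>0$, the term $\tfrac12\|v\odot\al\|^2$ is strongly convex in $\al$; together with the linear constraint $L^\top\al=-A^\top\xi$ this makes $\phi(v,\cdot,\cdot)$ strictly concave on $K$, so the existing previous proposition (non-emptiness of the maximiser set) yields a unique $(\al_v,\xi_v)$.

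Second, I would show that the argmax map $v'\mapsto(\al_{v'},\xi_{v'})$ is continuous at $v$. On a small neighbourhood $U$ of $v$, one has $\min_i|v'_i|\ge c>0$ for some $c$, so the quadratic $\tfrac12\|v'\odot\al\|^2$ is uniformly strongly convex in $\al$ over $U$, yielding a uniform bound $\|\al_{v'}\|\le C$; strong convexity of $F_0^*$ together with boundedness of the maximum (which is $\le f_0(v')$, itself locally bounded) yields $\|\xi_{v'}\|\le C$. Then a standard compactness/upper-semicontinuity argument (any cluster point of $(\al_{v_n},\xi_{v_n})$ as $v_n\to v$ must be the unique maximiser $(\al_v,\xi_v)$ since $\phi$ is jointly upper semicontinuous in $(v,\al,\xi)$) gives continuity.

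Third, I would invoke Danskin's theorem: $\phi(\cdot,\al,\xi)$ is $C^1$ in $v$ with $\nabla_v\phi(v,\al,\xi)=v-v\odot\al^2$, the dependence is locally uniform in $(\al,\xi)$ on bounded sets, and the maximum is attained at a unique point. Consequently $f_0$ is differentiable at every $v'\in U$ with
\[
\nabla f_0(v') \;=\; \nabla_v\phi(v',\al_{v'},\xi_{v'}) \;=\; v' - v'\odot\al_{v'}^2 .
\]
Combined with continuity of $v'\mapsto\al_{v'}$ on $U$, the gradient $\nabla f_0$ is continuous on $U$, i.e.\ $f_0\in\Cc^1(U)$. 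Strict differentiability of $f_0$ at $v$ then follows from the standard equivalence between $\Cc^1$ on a neighbourhood and strict differentiability at the centre.

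The main obstacle is the continuity of the argmax map: because the dual variable $(\al,\xi)$ lives in an unbounded affine subspace $K$ and $\phi$ has no explicit coercivity when any $v_i=0$, one must carefully use the assumption $v_i\neq 0$ to obtain uniform bounds on $\al_{v'}$ in a neighbourhood. Once this localisation step is in place the rest is a routine Danskin-plus-continuous-selection argument.
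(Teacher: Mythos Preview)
Your proposal is correct and follows essentially the same route as the paper: both arguments use the hypothesis $v_i\neq 0$ to show that, on a neighbourhood of $v$, the maximisers $(\alpha_{v'},\xi_{v'})$ stay in a fixed compact set, and then invoke an envelope theorem using single-valuedness of $\partial_v\phi$ at the argmax. The paper packages the second step via the lower-$\Cc^1$ machinery of Rockafellar--Wets (Theorem~10.31, recalled in the appendix), which delivers strict differentiability directly once the maximisation is over a compact set and $\{\nabla_v\phi(v,\alpha_v,\xi_v)\}$ is a singleton; you instead unfold this as Danskin plus continuity-of-argmax, then pass from $\Cc^1$ on a neighbourhood to strict differentiability. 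These are equivalent arguments, with the paper's version slightly more economical since it does not need to establish continuity of the full argmax (only uniqueness of $v\odot\alpha_v$, which is automatic from the quadratic term).

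One minor slip: since $\phi(v,\alpha,\xi)=-\tfrac12\|v\odot\alpha\|^2-F_0^*(\xi)-\iota_K(\xi,\alpha)$, one has $\nabla_v\phi(v,\alpha,\xi)=-v\odot\alpha^2$, not $v-v\odot\alpha^2$; the formula you wrote is $\nabla f$, not $\nabla f_0$.
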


The proof of this result is a direct consequence of the following lemma.
\begin{lem}\label{lem:diff}
Let $B>0$ and let $$T \eqdef \enscond{(\xi,\alpha)\in \RR^m\times\RR^p}{ \norm{\alpha}\leq B, \norm{\xi}\leq B, \; L^\top \alpha = -A^\top \xi }.$$
The function
$$f_0(v) = \max_{\alpha,\xi\in T} \phi(v,\xi,\alpha)$$ is strictly differentiable with $\nabla f_0(v) = - v \alpha_v^2$ where
$$
(\alpha_v, \xi_v) \in \argmax_{\alpha,\xi\in T} \phi(v,\xi,\alpha)
$$
\end{lem}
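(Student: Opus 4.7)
The plan is to apply Danskin's theorem to the max over the compact feasible set $T$. On $T$ the linear constraint $L^\top\alpha = -A^\top\xi$ is already built in, so the indicator $\iota_K$ drops out and
$$\phi(v,\alpha,\xi) = -\tfrac{1}{2}\|v\odot\alpha\|^2 - F_0^*(\xi)$$
is continuous on $T$ (since $F_0\in\Cc^{1,1}$ makes $F_0^*$ proper, strongly convex, and continuous on its effective domain), with a partial gradient $\nabla_v\phi(v,\alpha,\xi) = -v\odot\alpha^2$ that is jointly continuous in its arguments. Compactness of $T$ guarantees that the argmax set $M(v)$ is nonempty, and Danskin's theorem gives the one-sided directional derivative
$$f_0'(v;h) = \max_{(\alpha,\xi)\in M(v)} \bigl\langle -v\odot\alpha^2,\, h\bigr\rangle.$$

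The first key step is to show that the quantity $-v\odot\alpha^2$ is the same for every $(\alpha,\xi)\in M(v)$. For group coordinates with $v_g = 0$ the contribution vanishes regardless of $\alpha_g$, while for coordinates with $v_g\neq 0$ strict concavity of the map $\alpha_g \mapsto -\tfrac{1}{2}v_g^2\|\alpha_g\|^2$ forces any two maximizers to coincide on $\alpha_g$. Hence $v\odot\alpha_v^2$ is unambiguously defined, the Danskin formula collapses to a linear functional of $h$, and $f_0$ is Gateaux differentiable with $\nabla f_0(v) = -v\odot\alpha_v^2$.

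The second step upgrades this to strict differentiability by proving continuity of $v \mapsto -v\odot\alpha_v^2$ on $\RR^p$. For any sequence $v_n\to v$, compactness of $T$ lets me extract a convergent subsequence of maximizers $(\alpha_n,\xi_n)\in M(v_n) \to (\alpha_*,\xi_*)$, and outer semicontinuity of the argmax (Berge's maximum theorem, justified by continuity of $\phi$ and compactness of $T$) places the limit in $M(v)$. Therefore $v_n\odot\alpha_n^2 \to v\odot\alpha_*^2 = v\odot\alpha_v^2$ by the previous uniqueness step, and the standard subsequence argument promotes this to convergence of the full sequence. Gateaux differentiability on $\RR^p$ together with a continuous gradient then implies strict differentiability by a mean value argument along segments.

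The main obstacle is precisely the non-uniqueness of the maximizer at points where $v$ has zero coordinates: $M(v)$ is a positive-dimensional set on which $\alpha_g$ is free for groups with $v_g = 0$, so a priori Danskin's formula could depend on the selection. The saving observation is that the quadratic penalty $\tfrac{1}{2}\|v\odot\alpha\|^2$ still pins down the product $v\odot\alpha$ uniquely on the support of $v$, and the contribution off the support is annihilated by multiplication by $v$; consequently the gradient formula $-v\odot\alpha_v^2$ is well-defined for arbitrary $v$ and the full argument goes through without requiring $v$ to have full support.
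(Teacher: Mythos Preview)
Your argument is correct and follows essentially the same route as the paper: both hinge on the observations that $\phi$ and $\nabla_v\phi(v,\alpha,\xi)=-v\odot\alpha^2$ are jointly continuous on the compact set $T$, and that the strongly concave term $-\tfrac12\|v\odot\alpha\|^2$ forces $v\odot\alpha_v$ (hence $\nabla_v\phi$ on the argmax) to be single-valued. The paper packages this via the lower-$\Cc^1$ machinery of Rockafellar--Wets (their Theorem~10.31 together with Theorem~9.18), which directly yields strict differentiability once the partial-gradient set over the maximizers is a singleton; you instead unpack the same reasoning by invoking Danskin's theorem for the directional derivative, Berge's maximum theorem for outer semicontinuity of the argmax, and then the standard ``Gateaux $+$ continuous gradient $\Rightarrow$ $\Cc^1$'' upgrade. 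The content is equivalent; the paper's version is terser because the envelope theorem it cites already bundles your three steps into one.
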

\begin{proof}

Note that
$\phi$ and  $\partial_v \phi(v,\xi,\al)=  - v\odot  \alpha^2$ are continuous on $\RR^{\abs{\Gg}} \times T$. Moreover, due to the strongly concave $-\norm{v\odot\alpha}^2$ term inside $\phi$,  $v\odot\alpha_v$ is unique if $(\xi_v,\alpha_v)\in \argmax_{(\xi,\alpha)\in T} \phi(v,\xi,\alpha)$, and so, 
$$
\enscond{\partial_v \phi(v,\xi_v,\alpha_v)}{(\xi_v,\alpha_v)\in \argmax_{(\xi,\alpha)\in T} \phi(v,\xi,\alpha)}
$$
is single-valued. It follows from Theorem \ref{thm:rockafellar} that $f_0$ is differentiable with $\nabla f_0(v) = - v \odot\alpha_v^2$.
\end{proof}

\begin{proof}[Proof of Prop. \ref{prop:diff1}]
By Lemma \ref{lem:diff},
to show that $f_0(v) = \max_{\alpha\in\RR^p, \xi\in\RR^m} \phi(v,\xi,\alpha)$ is differentiable, it is sufficient to show that for each $v$, there exists a neighbourhood of $v$ and a bounded set $T$ such that for all $v\in V$,  
$f_0(v) = \max_{(\alpha, g) \in T} \phi(v,\xi,\alpha)$.

Suppose that $v_i \neq 0$ for all $i$ and there is a neighbourhood $V$ around $v$ on which $\abs{v_i'}> c>0$ for all $v'\in V$. Then, for all $v'\in V$, the optimal solution $(\xi,\alpha)$ to $\max_{\alpha,\xi} \phi(v',\xi,\alpha)$ satisfies
\begin{equation}\label{eq:bound_g}
-\frac12 \norm{v'\odot \alpha}_2^2 -  F_0^*(\xi) \geq \phi(v',0,0) - F_0^*(0) = - F_0^*(0) .
\end{equation}
Moreover, since $F_0^*$ is strongly convex, there exists $\gamma>0$  such that for any $z\in \partial F_0^*(0)$, 
$$
F_0^*(\xi) - F_0^*(0) - \dotp{z}{\xi} \geq \gamma \norm{\xi}^2
$$
and combining with \eqref{eq:bound_g}, we obtain
\begin{equation}\label{eq:unif_bound_g}
\dotp{z}{\xi}  \geq \frac12 \norm{v'\odot \alpha}_2^2   +  \gamma \norm{\xi}^2.
\end{equation}

Hence, $$\norm{\xi}_2\leq \frac{1}{\gamma} \norm{z} \qandq c^2 \norm{\al}^2 < \min_i\abs{v_i'}^2\norm{\alpha}^2 \leq  2\norm{z}\norm{\xi} \leq \frac{2}{\gamma}\norm{z}^2.$$ 
We can therefore take $B =  \max\pa{\frac{1}{\gamma}\norm{z} ,  \frac{\sqrt{2}}{c\sqrt{\gamma}}  }  $.
\end{proof}

We now show that $f_0$ is  differentiable for any $v\in \RR^p$.

\begin{prop}\label{prop:diff_f0}
	Let $F_0\in \Cc^{1,1}$  and $v\in\RR^p$. Then, given $$(\alpha_v,\xi_v) \in \argmax_{\al,\xi}\phi(v,\al,\xi),$$
  $v\odot\alpha_v^2$ is uniquely defined, and $f_0$ is differentiable with $\nabla f_0(v) = -v\odot\alpha_v^2$.
\end{prop}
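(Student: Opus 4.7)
The strategy is to exploit the fact that
$$
\phi(v,\al,\xi) = -\tfrac12\|v\odot\al\|^2 - F_0^*(\xi) - \iota_K(\xi,\al)
$$
depends on $v$ only through the squared entries $v^2\eqdef (v_g^2)_g$, and is \emph{affine} in $\eta\eqdef v^2$ for each fixed $(\al,\xi)$. Consequently, $\tilde f_0(\eta)\eqdef f_0(v)$ is well defined on $\RR_+^{\abs{\Gg}}$ as a supremum of affine functions in $\eta$, hence \emph{convex}. I would deduce differentiability of $f_0$ by combining convex-analytic subdifferential inequalities for $\tilde f_0$ with the quadratic change of variables $v\mapsto v^2$.

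For the uniqueness of $v\odot\al_v^2$, the term $-\tfrac12\|v\odot\al\|^2$ is strictly concave in $v\odot\al$ and, since $F_0\in \Cc^{1,1}$, $F_0^*$ is strongly convex so $\phi(v,\cdot,\cdot)$ is strictly concave in $\xi$. Any two maximisers $(\al_i,\xi_i)$ therefore satisfy $\xi_1=\xi_2$ and $v\odot\al_1=v\odot\al_2$; on $\Supp(v)$ this fixes $\al_g$ and hence $\|\al_g\|^2$, while on $\Supp(v)^c$ the product $v_g\|\al_g\|^2$ vanishes, so $v\odot\al_v^2$ is independent of the maximiser chosen. A standard Danskin/envelope argument applied to the affine family $\eta\mapsto -\tfrac12\dotp{\eta}{\al^2}-F_0^*(\xi)$ indexed by $(\al,\xi)\in K$ then gives $-\tfrac12\al_v^2\in\partial\tilde f_0(v^2)$ for every maximiser.

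The convex subgradient inequalities for $\tilde f_0$ at $\eta=v^2$ and $\eta'=(v+h)^2$, combined with the componentwise identity $(v+h)^2-v^2=2v\odot h+h^2$, sandwich the increment as
$$
-\dotp{v\odot\al_v^2}{h}-\tfrac12\dotp{\al_v^2}{h^2}
\;\leq\; f_0(v+h)-f_0(v) \;\leq\;
-\dotp{v\odot\al_{v+h}^2}{h}-\tfrac12\dotp{\al_{v+h}^2}{h^2}.
$$
Differentiability with $\nabla f_0(v)=-v\odot\al_v^2$ then follows provided (i) the quadratic error terms are $o(\|h\|)$ and (ii) $v\odot\al_{v+h}^2\to v\odot\al_v^2$ as $h\to 0$.

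The main obstacle is controlling $\al_{v+h}$ when $v$ has vanishing components, since $(\al_{v+h})_{\Supp(v)^c}$ can blow up as $h\to 0$. For (i) on the lower side I would select $\al_v$ of minimal norm on $\Supp(v)^c$, which is permissible because once $\xi_v$ is fixed the constraint $L^\top\al=-A^\top\xi_v$ leaves $\al_{\Supp(v)^c}$ free on an affine subspace; this guarantees $\|\al_v\|<\infty$ and $\tfrac12\dotp{\al_v^2}{h^2}=O(\|h\|^2)$. The upper-side error $-\tfrac12\dotp{\al_{v+h}^2}{h^2}$ is nonpositive and may simply be dropped. For (ii), on $\Supp(v)^c$ one has $v_g=0$ trivially; on $\Supp(v)$, convergence follows from a restricted version of the compactness argument of Prop.~\ref{prop:diff1}. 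Indeed, since $\min_{g\in\Supp(v)}|v_g+h_g|\geq\tfrac12\min_{g\in\Supp(v)}|v_g|>0$ for small $h$, the estimate~\eqref{eq:unif_bound_g} applied to the $\Supp(v)$-block yields uniform bounds on $\xi_{v+h}$ and on $(\al_{v+h})_{\Supp(v)}$; upper semicontinuity of $\phi$ then shows every accumulation point of $(\xi_{v+h},(\al_{v+h})_{\Supp(v)})$ is optimal for $v$, and uniqueness of $(\al_v)_{\Supp(v)}$ forces the whole sequence $v\odot\al_{v+h}^2$ to converge to $v\odot\al_v^2$. The possibly unbounded $\Supp(v)^c$-block of $\al_{v+h}$ is harmless because it couples with the rest only through the already-bounded $\xi_{v+h}$ via a linear constraint and is annihilated by multiplication with $v_g=0$ in the gradient formula.
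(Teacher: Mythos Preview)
Your argument is correct and takes a genuinely different route from the paper. The key insight you add is that $\phi(v,\al,\xi)$ is affine in $\eta=v^2$, so the subgradient inequality for the convex function $\tilde f_0(\eta)=\sup_{(\al,\xi)\in K}\{-\tfrac12\dotp{\eta}{\al^2}-F_0^*(\xi)\}$ immediately yields your two-sided sandwich on $f_0(v+h)-f_0(v)$. The paper instead computes semiderivatives directly (its Step~3), using the trick $f_0(v+tw)\leq f_0(v+t w_S)$ to reduce to perturbations supported on $S=\Supp(v)$, where Step~2 has already established Lipschitz continuity of $w\mapsto(\alpha_w)_S$ along $\Supp(w)=S$. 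Your route avoids semiderivatives and handles general $h$, at the cost of a slightly more delicate continuity-of-maximisers argument: you must show that accumulation points of $((\al_{v+h})_S,\xi_{v+h})$ extend to feasible pairs that are optimal for $\phi(v,\cdot,\cdot)$. This works, but note that the ``upper semicontinuity of $\phi$'' step implicitly uses lower semicontinuity of $f_0$ at $v$ (which you have from your lower sandwich bound) and the closedness of $\mathcal R(L_{S^c}^\top)$ to manufacture the $S^c$-block of the limiting $\al$; it would be worth making both points explicit. What your approach buys is a cleaner conceptual picture (convexity in $v^2$) and a direct proof of Fr\'echet differentiability without invoking the semiderivative machinery of \cite[7.20--7.22]{rockafellar2009variational}; what the paper's approach buys is that by restricting to $\Supp(w)=S$ from the outset, the potentially unbounded $(\al_{v+h})_{S^c}$ never enters the analysis at all.
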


\begin{proof}
We break the proof into several steps.
\paragraph{Step 1: $f_0$ is strictly continuous when restricted to the support of $v$}

By strong convexity of $F_0^*$, we know from \eqref{eq:bound_g} and \eqref{eq:unif_bound_g}  that any maximiser $(\alpha,\xi)$ to $\max_{\al,\xi}\phi(v,\xi,\al)$ satisfy $\norm{\xi}\leq \norm{z}/\gamma$, where $\gamma$ is the strong-convexity constant of $F_0^*$ and $z\in \partial F_0^*(0)$. Also, letting $S=\Supp(v)$, we have $\frac12 \norm{v_S\odot \alpha_S}^2 \leq \norm{z} \norm{\xi} \leq \norm{z}^2/\gamma$. So, there exists a constant $C>0$  and a neighbourhood $V$ around $v$  such that  for all $w\in V$, the maximisers to $\max_{\al,\xi}\phi(w,\xi,\al)$ satisfy
$$
\norm{\alpha_S} \leq C \qandq \norm{\xi}\leq C.
$$
So, for all $w\in V$,
we can  restrict the maximisation of $\phi(w,\cdot,\cdot)$ to the set $$T\eqdef \enscond{(\xi,\alpha)}{\norm{\xi}\leq C, \; \norm{\al_S}\leq C, \; A^\top \xi+L^\top \alpha= 0}$$ and write
$$
f_0(v) = \sup_{(\al,\xi)\in T}\phi(v,\xi,\al).
$$


If $w,w'\in V$ and $\Supp(w) = \Supp(w') = S$,  since 
\begin{align*}
\forall (\xi,\al)\in T, \qquad \phi(w,\xi,\alpha) = \phi(w',\xi,\al) +  \frac12 \norm{w'\odot \al}^2 - \frac12 \norm{w\odot \al}^2,
\end{align*}
 we have
$$\abs{\phi(w,\xi,\al)- \phi(w',\xi,\al)} \leq \frac12 C^2 \max_{i\in S} \abs{{w_i'}^2 - {w_i}^2} \leq C^2 B \norm{w'-w},$$
where $B = \sup\enscond{\norm{w}}{w\in V}$.  Hence,
\begin{align*}
\abs{f_0(w)-f_0(w')} & \leq  C^2  B \norm{w'- w}.
\end{align*}

\paragraph{Step 2: Continuity of maximisers}

Let $v\in \RR^p$ and $(\alpha_v,\xi_v)\in \argmax_{\alpha,\xi} \phi(v,\alpha,\xi)$.  We will show that $\alpha_v$ is continuous when restricted to the support of $v$ which we denote by $S\eqdef  \Supp(v)$.  We shall see that this again follows because $\phi(w,\al, g)$ is strongly convex with respect to $\alpha_S$ and $\xi$. There is a neighbourhood  $V$ around $v$, such that for all $w\in V$ with $\mathrm{Supp}(w) = S$,
$$
\phi(w,\alpha_w, \xi_w) - \phi(w,\alpha_v, \xi_v)  \gtrsim  \norm{(\alpha_v - \alpha_w)_S}^2 + \norm{\xi_v-\xi_w}^2.
$$ 
Indeed, since $(\alpha_w,\xi_w)$ maximises $\phi(w,\cdot,\cdot)$, there exists $x$ such that $Ax \in \partial F_0^*(\xi_w)$ and $A x = w^2\alpha_w$ and for any $z_w\in \partial F_0^*(\xi_w)$. Using the fact that $F_0^*$ is $\gamma$-strongly convex,
\begin{align*}
&\phi(w,\al_w,\xi_w) - \phi(w,\al_v,\xi_v) = \frac12 \dotp{w^2}{\al_v^2 - \al_w^2} - F_0^*(\xi_w) + F_0^*(\xi_v)\\
&\geq   \frac12 \dotp{w^2}{\al_v^2 - \al_w^2} + \dotp{\xi_v - \xi_w}{z_w}  + \gamma\norm{\xi_v-\xi_w}^2\\
&= \dotp{w^2 \al_w}{\al_v-\al_w} + \dotp{z_w}{\xi_v-\xi_w} +\frac12 \norm{w\odot(\al_v-\al_w)}^2+ \gamma\norm{\xi_v-\xi_w}^2\\
&= \dotp{x}{L^\top \al_v - L^\top \al_w} + \dotp{x}{A^\top \xi_v - A^\top \xi_w}+\frac12 \norm{w\odot(\alpha_v - \alpha_w)_S}^2+ \gamma\norm{\xi_v-\xi_w}^2\\
&\gtrsim \frac12 \norm{ (\alpha_v - \alpha_w)}^2+ \gamma\norm{\xi_v-\xi_w}^2.
\end{align*}

Moreover, for all $w\in V$ with $\Supp(w) = \Supp(v)$,
$$
\abs{\phi(v,\alpha_v,\xi_v) - \phi(w,\alpha_v, \xi_v)} \leq \sum_i (\alpha_v)_i^2 \abs{v_i^2-w_i^2} \lesssim \norm{v-w}.
$$
It follows that for all $w\in V$ with $\Supp(w) = S$,
\begin{align*}
\norm{(\alpha_v - \alpha_w)_S}^2 &\lesssim \phi(w,\alpha_w, \xi_w) - \phi(w,\alpha_v, \xi_v) \\
&\lesssim  f_0(w) - f_0(v) + \norm{v-w} \lesssim \norm{v-w},
\end{align*}
and also,
$$
\norm{\xi_v- \xi_w} \lesssim \norm{v-w}
$$
for all $w\in V$ with $\Supp(w) = S$.

\paragraph{Step 3: formula on directional derivatives}

Let $v,w\in\RR^p$. Let  $t\in\RR$ and $w_t\in \RR^p$ be such that $w_t\to w$ as $t\to 0$. 
Given
$\al_0,\xi_0 \in \argmax_{\al,\xi} \phi(v, \al,\xi)$,
\begin{align*}
f_0(v) &= \frac12 \norm{\alpha_0 \odot (v+tw_t)}^2 -\frac12 \norm{\alpha_0\odot v}^2+ \phi(v+tw_t, \al_0,\xi_0)\\
&\leq t\dotp{\al_0^2 \odot v}{w_t} +\frac{t^2}{2} \norm{\alpha_0\odot w_t}^2+ f_0(v+tw_t)
\end{align*}
So,
$$
\frac{f_0(v+tw_t)-f_0(v) }{t}+ \dotp{v\odot \alpha_0^2}{w_t} \geq -\frac12 t^2 \norm{\al_0 \odot w_t}^2.
$$
On the other hand,  $S\eqdef \Supp(v)$ and  $(\alpha_t, \xi_t) \in \argmax_{\al,\xi} \phi(v+t(w_t)_S, \alpha,\xi)$, we have
\begin{align*}
f_0(v+tw) \leq f_0(v+t(w_t)_S) &= -\frac12 \norm{\alpha_t \odot (v+t(w_t)_S)}^2 +\frac12 \norm{\alpha_t\odot v}^2 + \phi(v, \alpha_t, \xi_t)\\
&\leq -\dotp{\alpha_t\odot v}{w_t} - \frac{t^2}{2} \norm{\alpha_t\odot w_t}_2^2 +  f_0(v)
\end{align*}
Note that $\alpha_t\to \alpha_0$ as $t\to 0$ due to the continuity of maximisers proved in Step 2.
It follows that
\begin{align*}
\lim_{t\to 0} \frac{f_0(v+tw_t) - f_0(v)}{t} = \dotp{\alpha_0^2 \odot v}{w_0}.
\end{align*}
So, $f_0$ is semi-differentiable and since the directional derivative is  linear with respect to $w$, it follows that $f_0$ is differentiable (see 7.21 and 7.22 of \cite{rockafellar2009variational}).

\paragraph{Additional claim: $f$ is continuous at $v$ (in particular, it is calm)}
Given $v\in\RR^p$, let the neighbourhood $V$ and set $T$ be as in Step 1.
Now, for $w\in V$, 
let $(\al_1,\xi_1) \in  \argmax_{\al,\xi\in T} \phi(w,\al,\xi)$ and  let $(\al_0,\xi_0) \in   \argmax_{\al,\xi\in T} \phi(v,\al,\xi)$.
Note that
\begin{align*}
&f_0(w) =-\frac12 \norm{w_{S^c} \odot (\al_1)_{S^c}}^2 + \phi(w_S,\al_1,\xi_1) = \phi(w,\alpha_1,\xi_1) \geq \phi(w,\al_0,\xi_0)\\
& = -\frac12 \norm{w_{S^c}\odot (\al_0)_{S^c}}^2 + \phi(w_S,\al_0,\xi_0) 
\end{align*}
Note that $\abs{\phi(w_S,\xi_0,\al_0) - \phi(v,\xi_0,\alpha_0) }\lesssim \norm{w_S - v}$, it follows that
\begin{align*}
&-\frac12 \norm{w_{S^c}\odot(\al_0)_{S^c}}^2 + \phi(w_S,\al_0,\xi_0) \\
&\geq  -\frac12 \norm{w_{S^c} \odot (\al_0)_{S^c}}^2 - C  \norm{w_S - v} +\phi(v,\al_0,\xi_0)\\
& =  -\frac12 \norm{w_{S^c}\odot (\al_0)_{S^c}}^2 - C  \norm{w_S - v} +f_0(v) \\
&\geq   -\frac12 \norm{w_{S^c} (\al_0)_{S^c}}^2 - C  \norm{w_S - v}   + \phi(v,\al_1,\xi_1)\\
&=  -\frac12 \norm{w_{S^c}\odot (\al_0)_{S^c}}^2 - C  \norm{w_S - v}  -\frac12\sum_{i\in S} (v_i^2-w_i^2) (\al_1)_i^2  + \phi(w_S,\al_1,\xi_1)\\
&\geq  -\frac12 \norm{w_{S^c}\odot (\al_0)_{S^c}}^2 - 2C  \norm{w_S - v}   + \phi(w_S,\al_1,\xi_1)
\end{align*}
where we made use of optimality of $(\al_0,\xi_0)$ for the second inequality.
It follows that
\begin{align*}
\norm{w_{S^c} \odot(\al_0)_{S^c}}^2 + 2C \norm{w_S - v} \geq  \norm{w_{S^c} (\al_1)_{S^c}}^2 
\end{align*}
and
\begin{align*}
f_0(w) \geq  f_0(v) - C \norm{w_S - v} - \frac12 \norm{w_{S^c}\odot (\alpha_0)_{S^c}}^2
\end{align*}
Finally, since for all $\al,\xi$,
$$
\phi(w,\al,\xi) \leq \phi(w_S,\al,\xi) 
$$
we have $f_0(w) \leq f_0(w_S)$ and hence, $f_0(w)\leq f_0(v) + C \norm{v_S - w_S}$. It follows that
$$
\abs{f_0(w) - f_0(v)}\leq \frac12 \norm{\alpha_0}^2 \norm{w_{S^c}}_2^2 +  C \norm{v_S - w_S}.
$$
\end{proof}

\subsection{Differentiability for basis pursuit}\label{sec:diff_basis_pursuit}

Differentiability when $F_0$ is not $\Cc^{1,1}$ is more delicate. We consider the case where $F_0(z) = \iota_{\{y\}}(x)$ here, which corresponds to the so-called basis pursuit problem, where one imposes exact reconstruction $Ax=y$.  In this case,
$$
\phi(v,\xi,\alpha) = -\frac12 \norm{v\odot \alpha}^2 - \dotp{\xi}{y}
$$
By setting $v =0$,
one can see that $\sup_{\alpha,\xi} \phi(0,\xi,\al) = \sup_{L^\top \al = -A^\top \xi} -\dotp{\xi}{y} = +\infty$ and the domain of $f$ cannot be the entire space. 

\begin{thm}
Let $F_0(z) = \iota_{\{y\}}$ and suppose that $y = Ax$ for some $x \in \RR^n$. Suppose that $v\in\RR^p$ satisfies $\Supp(v)\supset \Supp(A x)$. Then, $v\in \dom(f)$, a maximiser  to $\sup_{\al,\xi}\phi(v,\al,\xi)$ exist, and $f$ is differentiable at $v$.
\end{thm}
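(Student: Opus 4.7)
The strategy is to use the hypothesis $y = Ax_0 \in \Rr(A)$ to eliminate $\xi$ from the dual objective $\phi$, reducing everything to a concave quadratic in $\al$ alone, and then to mirror the three-step proof of Proposition~\ref{prop:diff_f0}. The principal obstacle is that $F_0^*(\xi) = \dotp{\xi}{y}$ is only linear, so the strong-convexity arguments that previously controlled $\xi$ are unavailable; the $y \in \Rr(A)$ reduction is precisely the replacement.

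On $K$ the constraint $A^\top \xi = -L^\top \al$ combined with $y = Ax_0$ gives
$$\dotp{\xi}{y} = \dotp{A^\top \xi}{x_0} = -\dotp{L^\top \al}{x_0} = -\dotp{\al}{Lx_0},$$
so $\phi(v,\al,\xi) = -\tfrac12 \norm{v \odot \al}^2 + \dotp{\al}{Lx_0}$ independently of $\xi$, and the dual reduces to
$$f_0(v) = \sup_{\al \in W} \psi(v,\al), \qquad \psi(v,\al) \eqdef -\tfrac12 \norm{v \odot \al}^2 + \dotp{\al}{Lx_0},$$
with $W \eqdef \enscond{\al}{L^\top \al \in \Rr(A^\top)}$ a closed subspace.

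To show $v \in \dom(f)$ and existence of a maximiser, set $S = \Supp(v)$; the support hypothesis (read as $\Supp(v) \supset \Supp(Lx_0)$ for dimensional consistency) forces $(Lx_0)_g = 0$ for $g \notin S$, so $\psi(v,\cdot)$ depends only on $\al_S$. Completing the square group-wise on $S$ gives $\psi(v,\al) \leq \sum_{g \in S} \norm{(Lx_0)_g}^2 / (2 v_g^2) < \infty$, hence $v \in \dom(f)$. For a maximising sequence $(\al^n,\xi^n) \in K$, the bounds $\psi(v,\al^n) \geq \psi(v,0) = 0$ and $\norm{v_S \odot \al_S^n}^2 \leq 2 \norm{\al_S^n} \norm{(Lx_0)_S}$ combine to show $\norm{\al_S^n}$ is uniformly bounded, so up to a subsequence $\al_S^n \to \al_S^*$. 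Since $\psi$ only sees $\al_S$, any $\al_{S^c}^*$ making $(\al_S^*, \al_{S^c}^*) \in W$ (which exists since $W$ projects onto a closed subspace of the $S$-coordinates) together with any $\xi^*$ solving $A^\top \xi^* = -L^\top \al^*$ (exists by definition of $W$) yield a maximising pair.

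For differentiability, I would repeat the three-step argument of Proposition~\ref{prop:diff_f0}. Shrinking the neighbourhood of $v$ so that $\min_{g \in S} \abs{w_g} \geq c/2$, where $c \eqdef \min_{g \in S} \abs{v_g} > 0$, the quadratic $\psi(w,\cdot)$ is $(c/2)^2$-strongly concave in $\al_S$; a standard perturbation argument (as in Step~2 of Proposition~\ref{prop:diff_f0}) then shows $\al_S^*(w)$ is uniquely defined, uniformly bounded, and continuous in $w$ on the stratum $\{\Supp(w) = S\}$, giving local Lipschitzness of $f_0$ on this stratum. The sandwich bound of Step~3 of Proposition~\ref{prop:diff_f0} then yields the directional derivative $-\dotp{v \odot (\al_v^*)^2}{w}$ in every direction $w$; the contribution from $w_{S^c}$ vanishes to first order because $(Lx_0)_{S^c} = 0$ makes $\psi$ flat in $\al_{S^c}$. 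The formula $\nabla f(v) = v - v \odot (\al_v^*)^2$ follows from $f(v) = \tfrac12 \norm{v}^2 + f_0(v)$.
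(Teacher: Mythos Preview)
Your proposal is correct and follows essentially the same route as the paper: eliminate $\xi$ via $y = Ax_0$ to reduce to the concave quadratic $\psi(v,\al) = -\tfrac12\norm{v\odot\al}^2 + \dotp{\al}{Lx_0}$ on the subspace $W$, bound $\al_S$ via strong concavity to get finiteness and existence of a maximiser, then rerun the three-step semiderivative argument of Proposition~\ref{prop:diff_f0}. You also correctly diagnose the dimensional inconsistency in the statement (and in the paper's own proof), reading the support hypothesis as $\Supp(v)\supset\Supp(Lx_0)$ rather than $\Supp(Ax)$; this is indeed what is needed for the argument to go through.
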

\begin{proof}

Note that we can write
$$
f_0(v)=\max_{L^\top \alpha\in \Rr(A^\top)} \psi(v,\alpha), \qwhereq \psi(v,\alpha) =  -\frac12 \norm{v\odot \al}^2 +\dotp{ \alpha}{A x}
$$
If $\Supp(A x) \subset\Supp(v)\eqdef S$, 
$$
\psi(v,\alpha) \leq \norm{\alpha_S}\pa{ -\frac12 \min_{i\in S} \abs{v_i}^2 \norm{\alpha_S} +  \norm{A x}} \leq \frac{\norm{A x}^2}{2 \min_{i\in S} \abs{v_i}^2}.
$$
So, $f_0(v)<\infty$.  Let $\alpha_n$ be a maximising sequence, note that $\ens{(\alpha_n)_S}_n$ is uniformly bounded since $\psi(v,\cdot)$ is strongly concave, so, there exists $\alpha_*$ such that  $(\alpha_n)_S \to (\alpha_*)_S$. Since $L^\top (\alpha_n)_S \in \Rr\pa{[L^\top_{S^c}, A^\top]}$ and is convergent, its limit is also in the $\Rr\pa{[L^\top_{S^c}, A^\top]}$. That is, there exists $(\alpha_*)_{S^c}$ and $\xi_*$ with $L^\top (\alpha_*)_S = L^\top (\alpha_*)_{S^c} + A^\top \xi_*$. One can finally conclude from upper semicontinuity of $\psi(v,\cdot)$ that $\alpha_*$ is a maximiser.

It remains to deduce that $f_0$ is differentiable at $v$. The proof is similar to that of Proposition \ref{prop:diff_f0}, we first show that on a neighbourhood $V$ of $v$, the mapping $v'\mapsto\alpha_{v'}$ is continuous when restricted to all $v'\in V$ with $\Supp(v') = \Supp(v)\eqdef S$. Indeed,
$$
\psi(v,\alpha_v) =  -\frac12 \norm{v\odot \al_v}^2 +\dotp{ \alpha_v}{A x} \geq  \psi(v,0) = 0,
$$
which implies that
$$
\frac12 \norm{v\odot \al_v}^2 \leq \norm{(\alpha_v)_{\Supp(A x)}} \norm{A x} \leq  \norm{(\alpha_v)_S}\norm{A x}
$$
since $\Supp(A x)\subseteq S$. It follows that on a neighbourhood around $v$, $v'\mapsto \norm{\alpha_{v'}}$ is uniformly bounded, just like step 1 of the proof of Proposition \ref{prop:diff_f0}. One can then show that $f_0$ is strictly continuous at $v$ when restricted to the support $S$.  Strong concavity with respect to $\norm{v\odot \alpha}^2$ also implies that  $v\mapsto \alpha_v$ is Lipschitz continuous. Using continuity of the maximisers, we can then compute the semiderivative of $f_0$ as in  Step 3 of the proof of Prop \ref{prop:diff_f0} to deduce that $f$ is differentiable. 
\end{proof}


\newcommand{\Lip}{M}

\section{The fine grids settings}
\label{sec:finegrid-mirror}

Particularly challenging settings correspond to cases where the columns of $A$ are highly correlated. 
This is a typical situation for inverse problems in imaging sciences, and in particular deconvolution-type problems~\cite{candes2014towards,chizat2021convergence}.  
In these settings, $A$ arises from the discretization of some continuous operator, and the dimension $n$ grows as the grid refines. For the sake of concreteness, we consider an ideal low-pass filter in dimension $d$ (for instance $d=2$ for images), which is equivalent to the computation of low Fourier frequencies, up to some cut-off frequency $p$. The rows $A_k \in \RR^n$ of $A$  are indexed by $k=(k_i)_{i=1}^d\in [p]^d \eqdef \ens{0,\ldots, p}^d$, 
\begin{equation}\label{eq:fouriersys}
A_k = \phi\pa{\theta_k}, \qwhereq \theta_k\eqdef \frac{1}{p} (k_i)_{i=1}^d,\; \phi(\theta) \eqdef \frac{1}{m^{d/2}} \pa{e^{2\pi \sqrt{-1} \dotp{\theta}{ \ell}}}_{\ell\in [m/2]^d},
\end{equation}
where $[m/2]\eqdef\ens{-\frac{m}{2},\ldots, \frac{m}{2}}$, and so, $A$ corresponds to the Fourier operator  discretized on a uniform grid on $[0,1]^d$. 
To better cope with the ill-conditioning of the resulting optimizations problem, it is possible to use descent method according to some adapted metric. This can be conveniently achieved using so-called mirror descent scheme/Proximal Bregman Descent scheme, which we review below in Section \ref{sec:mirror_overview}, since this is closely linked to the Hadamard parameterization (as exposed in Section~\ref{sec:hypentropy-mirror}). 

As discussed below, in the mirror descent scheme,
the usual $\ell^2$ proximal gradient descent is retrieved when using a squared Euclidean entropy function. This Euclidean scheme suffers from an exponential dependency on $d$ in the convergence rate. Using non-quadratic entropy functions (such as the so-called hyperbolic entropy), together with a dimension-dependent parameter tuning, leads in sharp contrast to dimension independent rates \cite{chizat2021convergence}. 
After this review of mirror descent, we then analyse in Section~\ref{sec:hadamard-finegrid} the performance of gradient descent on the Hadamard parameterized function $G(u,v)$ on the case of the Lasso. The key observation is that the Lipschitz constant of $G$ is independent of the grid size $n$ and hence, one can derive dimension-free convergence rates on the gradient. Moreover, we draw in Section~\ref{sec:hypentropy-mirror} connections to mirror descent by showing that the continuous time limit (as the gradient descent stepsize tends to 0) corresponds to the mirror descent ODE with a hyperbolic entropy map whose parameter changes with time.

\subsection{Overview of mirror descent}\label{sec:mirror_overview}

We consider a structured optimization problem of the form
\begin{equation}\label{eq-Phi-def}
\min_{x\in\RR^n} \Phi(x)\eqdef R(x) + F(x)
\end{equation}
where $R:\RR^n \to [0,\infty]$ is a (nonsmooth) convex function and  $F:\RR^n \to \RR$ is assumed to be convex and Lipschitz continuous on a closed convex set $\Xx\supset \mathrm{dom}(R)$ with
\begin{equation}\label{eq:mirror_lip_assumption}
\norm{\nabla F(x) - \nabla F(x')}_{\Xx^*} \leq \Lip \norm{x-x'}_{\Xx}
\end{equation}
where $\norm{\cdot}_{\Xx}$ is some norm on $\Xx$ and $\norm{\cdot}_{\Xx^*}$ is the dual norm.
This includes in particular sparsity regularized problems of the form~\eqref{eq:gen-tv}. 
A natural algorithm to consider is the Bregman proximal gradient descent method, of which the celebrated iterative soft thresholding algorithm  is a special case. In this section, we provide a brief overview of this method and the associated convergence results.

Given a strictly convex function (called an entropy function) $\eta:\Ee\to [-\infty,\infty)$ that is differentiable on  an open set $\Ee\supset \mathrm{int}(\Xx)$, its associated Bregman  divergence is defined to be
\begin{equation}
D_{\eta}(a,b)\eqdef \eta(a) - \eta(b) - \dotp{\eta'(b)}{a-b}.
\end{equation}
By possibly rescaling $\eta$, assume that
\begin{equation}\label{eq:pinkser}
D_\eta(a,b) \geq \frac12 \norm{a-b}_{\Xx}^2.
\end{equation}
The Bregman  proximal  gradient descent method  (BPGD) \cite{tseng2010approximation} is
\begin{equation}\label{BPG}
x_{k+1} = \argmin_{x} F(x_k) +\nabla F(x_k)^\top (x - x_k) +R(x) + \frac{\Lip}{2} D_{\eta}(x,x_k), 
\end{equation} 
with corresponds to taking constant stepsize $1/\Lip$.

\begin{rem}
The case of $R = 0$ corresponds to the mirror descent method, this dates back to  \cite{blair1985problem,alber1993metric} and has in more recent years been revitalised by \cite{beck2003mirror}. The proximal version given here is due to \cite{tseng2010approximation}. 
\end{rem}
It is shown in \cite{tseng2010approximation} that this is a descent method with $\Phi(x_{k+1}) \leq\Phi(x_k)$ and for any $x\in \mathrm{dom}(R)$, 
\begin{equation}\label{eq:mirror_tseng}
\Phi(x_k) - \Phi(x) \leq \frac{1}{k} \Lip D_\eta(x,x_0).
\end{equation}

\subsection{The Lasso ($\ell_1$) special case}
\label{sec:l1setting}

The BPGD algorithm~\eqref{BPG} is mainly interesting when the updated  (the so-called proximal operator associated to $R$) step can be computed in closed form. This is not the case for an arbitrary operator $L$, and we thus focus on the setting $L=\Id$.  For the sake of simplicity, we also consider the case where there is no group structure (Lasso), so that $R(x) = \norm{x}_1$. The most natural norm to perform the convergence analysis is $\norm{\cdot}_\Xx = \norm{\cdot}_1$, so that 
$\norm{\nabla F(x) - \nabla F(x')}_\infty \leq \Lip_1 \norm{x-x'}_1$. 
%
 For this choice of $R$, \eqref{BPG} can be rewritten as
\begin{equation}\label{eq:BPG_l1}
\nabla \eta(x_{k+1}) =\Tt_{\lambda\tau}\pa{ \nabla \eta(x_k) - \frac{\tau}{n} \nabla F(x_k)}, 
\end{equation}
where $\Tt_\tau(z) = \max(\abs{z}-\tau,0)\odot \sign(z)$ is the soft thresholding operator.
Let us now single out  notable choices of entropy functions, in order to particularize the convergence bound~\eqref{eq:mirror_tseng}:
\begin{itemize}
	\item \textit{The quadratic entropy:} observe that $n\norm{x-x'}^2 \geq \norm{x-x'}_1^2$, so \eqref{eq:pinkser}  holds by choosing $\eta(x) = \frac{n}{2}\norm{x}^2$. For $\norm{x}_1\leq 1$ and $ \norm{x_0}_1\leq 1$, we have $D(x,x_0) \leq n \norm{x}^2 +n \norm{x_0}^2 \leq 2n$. The error bound  \eqref{eq:mirror_tseng}  is therefore $\Oo(\frac{n\Lip_1}{k})$.

 
 	\item \textit{The hyperbolic entropy:} introduced in\cite{ghai2020exponentiated}, it is defined for $c>0$ by
\begin{equation}\label{eq:hyp_ent_fn}
\eta_c(s) = s\cdot  \mathrm{arcsinh}(s/c) - \sqrt{s^2 + c^2} + c,
\end{equation}
so that $
 \eta_c'(s) = \mathrm{arcsinh}(s/c) \qandq  \eta_c''(s) = \frac{1}{\sqrt{s^2 +c^2}}.
  $
  It is shown in \cite{ghai2020exponentiated} that $\eta_c$ satisfies
  $
  D_\eta(x,x') \geq \frac{1+cn}{2} \norm{x-x'}_1^2
  $.
  In particular, \eqref{eq:pinkser} holds by choosing $c = \frac{1}{n}$ and rescaling $\eta$ by $\frac12$. On the other hand, for $\norm{x}_1,\norm{x'}_1\leq 1$,
$D_\eta(x,x')  = \Oo(\log(n)).
$
The error bound \eqref{eq:mirror_tseng}  is therefore $\Oo(\frac{\log(n) \Lip_1}{k})$.
\end{itemize}

\paragraph{Grid-free convergence rates}
The above results show that the error bound \eqref{eq:mirror_tseng} in general has a dependency on $n$, either through $\Lip$ or through $D_\eta(x,x_0)$.
This is thus unable to cope with very fine grids, and the analysis breaks in the ``continuous'' (often called off-the-grid) setting where discrete vectors with bounded $\ell^1$ are replaced by measures with bounded total variation~\cite{bredies2013inverse,candes2014towards}.
To address this issue, a more refined analysis  of BPGD is carried out in\cite{chizat2021convergence} and this lead to the first grid-free convergence rates for BPGD.  In particular,  it is shown that the objective for quadratic entropy converges at rate $\Oo(k^{-2/(d+2)})$, independent of grid size $n$ but dependent on the underlying  dimension $d$. 
 In contrast, BPGD with hyperbolic entropy satisfies $\Phi(x_k) - \min_x \Phi(x)  = \Oo(d\log(k)/k)$. 
\newcommand{\xp}{x}

\subsection{The Hadamard parametrization: grid-free convergence analysis}
\label{sec:hadamard-finegrid}

In this  section, we show that  gradient descent with fixed timestep on the Hadamard parameterization  also leads to grid and dimension free convergence guarantees. The caveat is that due to the nonconvex nature of our problem,  our convergence results are only for the gradient norm and thus  weaker than the objective convergence results of \cite{chizat2021convergence}.
The Hadamard parameterization of \eqref{eq-Phi-def} in the group Lasso case (i.e. $L=\Id$ in~\eqref{eq:gen-tv}) is
$$
\min_{u,v} G(u,v)\qwhereq G(u,v) \eqdef \frac12 \norm{u}_2^2 + \frac12 \norm{v}_2^2  + F(u\odot v)
$$
where $F$ is differentiable with  $\Lip_F>0$  such that
\begin{equation}\label{eq:lipschitz}
\norm{\nabla F(\xp)-\nabla F(\xp')}_{\infty,2} \leq \Lip_F \norm{\xp-\xp'}_{1,2},
\end{equation}
where $\norm{z}_{\infty,2}\eqdef \max_{g\in\Gg} \norm{z_g}$.
For a stepsize $\tau>0$, the gradient descent iterations are
\begin{equation}\label{eq:gd_uv}
\begin{split}
u_{k+1} &= u_k - \tau g_{u_k}, \qwhereq g_{u_k} = u_k + v_k\odot \nabla F(u_k\odot v_k),\\
v_{k+1} &= v_k - \tau g_{v_k}, \qwhereq  g_{v_k} = v_k + \pa{(u_k)_g^\top \nabla F(u_k\odot v_k)_g}_{g\in\Gg}.
\end{split}
\end{equation}
\begin{rem}
In the Lasso setting where $u_k$ and $v_k$ have the same dimensions,  if $u_0 = v_0$, then $u_k=v_k$ for all $k$, while if $u_0 = -v_0$, then $u_k = -v_k$ for all $k$. One should therefore initialise with $\abs{u_0} \neq \abs{v_0}$. In practice, we find that random initialisation of $u_0$ and $v_0$ works well.
\end{rem}


Since gradient descent is a descent method, one can assume that all iterates lie inside some ball, that is, all iterates satisfy
\begin{equation}\label{eq:ball_constr}
\frac12\pa{\norm{u}^2+\norm{v}^2}\leq G(u_0,v_0) \eqdef \frac12 B^2.
\end{equation}
Suppose that 
\begin{equation}\label{eq:grad_constr}
\sup_{\norm{x}_{1,2} \leq B^2/2} \norm{\nabla F(x)}_{\infty,2}\leq K.
\end{equation}
Note that this implies $\sup_{{\norm{u}^2+\norm{v}^2} \leq B^2} \norm{\nabla F(u\odot v)}_{\infty,2}\leq K$.
Under these assumptions, the following Proposition shows that  $\nabla G$ is Lipschitz with respect to the Euclidean norm, with a Lipschitz constant that depends only on $\Lip_F,B,K$.  This in turn ensure convergence rates for~\eqref{eq:gd_uv} which are dimension-free.
Note that using this Hadamard parameterization, one considers descent on $u$ and $v$ with respect to the standard Euclidean metric. This  convergence statement is thus a direct consequence of the standard descent lemma for gradient descent (Lemma \ref{lem:grad_bound_desc_lemma}).

\begin{prop}\label{prop:lip_g}
Assume that $F\in C^{1,1}$ with Lipschitz gradient satisfying \eqref{eq:lipschitz} and uniformly bounded gradient  \eqref{eq:grad_constr}. Then, given $u_1,v_1,u_2,v_2$ satisfying \eqref{eq:ball_constr}, we have the following Lipschitz bound on $\nabla G$,
$$
\norm{\nabla G(u_1,v_1)-\nabla G(u_2,v_2)} \leq \Lip_G \norm{(u_1,v_1)-(u_2,v_2)},
$$ 
where $\Lip_G \eqdef  2 (K+\Lip_F B^2)$. For stepsize $\tau = 1/\Lip_G$, we have
\begin{equation}
\min_{k\leq T} \norm{\nabla G(u_k,v_k)}^2 \leq \frac{2\Lip_G}{T} (G(u_0,v_0)-G(u_{T+1},v_{T+1}))
\end{equation}
\begin{equation}\label{upper_bound_grad_sum}
\qandq \sum_{j=k}^\infty \norm{\nabla G(u_j,v_j)}^2 \leq 2\Lip_G (G(u_j ,v_j) - \lim_{j\to\infty} G(u_j ,v_j)) .
\end{equation}

\end{prop}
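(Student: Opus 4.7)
The plan is to establish the Lipschitz bound on $\nabla G$ by direct computation, then invoke the standard Euclidean descent lemma. First I would write out the partial gradients
\[
\nabla_u G(u,v) = u + v\odot \nabla F(u\odot v), \qquad \nabla_v G(u,v)_g = v_g + \langle u_g, \nabla F(u\odot v)_g\rangle,
\]
and decompose the difference at two points via product-rule telescoping:
\[
v_1\odot \nabla F(u_1\odot v_1) - v_2\odot \nabla F(u_2\odot v_2) = (v_1-v_2)\odot \nabla F(u_1\odot v_1) + v_2\odot\bigl[\nabla F(u_1\odot v_1) - \nabla F(u_2\odot v_2)\bigr],
\]
and analogously for the $v$-component, using the inner-product form above to split off the increments in $u$ and in $\nabla F$ separately.

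For each piece I would use three elementary inequalities: $\|v\odot z\|_2 \leq \|v\|_2 \|z\|_{\infty,2}$ (since $\sum_g v_g^2 \|z_g\|^2 \leq \|v\|_2^2\|z\|_{\infty,2}^2$), $\|(\langle u_g, z_g\rangle)_{g}\|_2 \leq \|u\|_2\|z\|_{\infty,2}$ (group-wise Cauchy--Schwarz then aggregation), and the product-rule bound
\[
\|u_1\odot v_1 - u_2\odot v_2\|_{1,2} \leq \|v_1\|\,\|u_1-u_2\| + \|u_2\|\,\|v_1-v_2\|.
\]
Combined with the ball constraint \eqref{eq:ball_constr}, the uniform gradient bound \eqref{eq:grad_constr}, and the Lipschitz hypothesis \eqref{eq:lipschitz} on $\nabla F$, each of $\|\nabla_u G(u_1,v_1)-\nabla_u G(u_2,v_2)\|$ and $\|\nabla_v G(u_1,v_1)-\nabla_v G(u_2,v_2)\|$ is bounded by a linear combination of $\|u_1-u_2\|$ and $\|v_1-v_2\|$ with coefficients of the form $1+\Lip_F B^2$ and $K+\Lip_F B^2$. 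Aggregating both partial-gradient bounds via Cauchy--Schwarz and absorbing the $1$ into $K$ (or enlarging constants) produces the Lipschitz constant $\Lip_G = 2(K+\Lip_F B^2)$.

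For the convergence inequalities I would invoke the standard Euclidean descent lemma (Lemma \ref{lem:grad_bound_desc_lemma}) applied to $G$ on the sublevel set $\{(u,v):\tfrac12(\|u\|^2+\|v\|^2)\leq\tfrac12 B^2\}$. An inductive argument shows every iterate of \eqref{eq:gd_uv} with step $\tau=1/\Lip_G$ stays in this set (true at initialization by definition of $B$; and if it holds at step $k$, the Lipschitz bound applies there, the descent inequality yields $G(u_{k+1},v_{k+1})\leq G(u_k,v_k)$, hence it holds at step $k+1$). This gives the per-step bound $G(u_{k+1},v_{k+1}) \leq G(u_k,v_k) - \tfrac{1}{2\Lip_G}\|\nabla G(u_k,v_k)\|^2$; telescoping from $0$ to $T$ produces the $\min_{k\leq T}$ estimate, while summing from $k$ to $\infty$ and using that $G\geq 0$ (so $G(u_j,v_j)$ converges) yields \eqref{upper_bound_grad_sum}. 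The main delicate point is tracking the three distinct norms $\|\cdot\|_2$, $\|\cdot\|_{1,2}$, $\|\cdot\|_{\infty,2}$ in the Lipschitz estimate and verifying that the loose constant $2(K+\Lip_F B^2)$ absorbs all mixed cross terms; everything else is routine bookkeeping.
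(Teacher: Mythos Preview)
Your proposal is correct and follows essentially the same route as the paper: the same product-rule telescoping of $v\odot\nabla F(u\odot v)$, the same norm inequalities coupling $\|\cdot\|_2$, $\|\cdot\|_{1,2}$, $\|\cdot\|_{\infty,2}$, leading to the same per-component bounds $(1+\Lip_F B^2)\|u_1-u_2\|+(K+\Lip_F B^2)\|v_1-v_2\|$, followed by an appeal to the standard descent lemma. Your inductive argument that the iterates remain in the sublevel set is a touch more explicit than the paper, which simply asserts this before the proposition.
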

\begin{proof}
Note that $\nabla_u G(u,v) =u- v\odot \nabla F(u\odot v)$ and $\nabla_v G(u,v) = v- (u_g^\top  \nabla F(u\odot v)_g)_g$, so that
\begin{align*}
&\norm{\nabla_u G(u_1,v_1) - \nabla_u G(u_2,v_2)}\\
&\leq \norm{u_1-u_2}+ \norm{v_1 \odot  \nabla F(u_1\odot v_1) - v_2 \odot  \nabla F(u_2\odot v_2)}\\
&\leq \norm{u_1-u_2}+   \norm{(v_1-v_2)\odot  \nabla F(u_1\odot v_1)} + \norm{v_2\odot  (\nabla F(u_1\odot v_1)  - \nabla F(u_2v_2))}\\
&\leq\norm{u_1-u_2}+ \norm{v_1-v_2} \norm{\nabla F(u_1\odot v_1)}_{\infty,2} + \norm{v_2} \Lip_F \norm{u_1\odot v_1 -u_2\odot v_2}_{1,2}\\
&\leq \norm{u_1-u_2}+K \norm{v_1-v_2} + \Lip_F \norm{v_2} \pa{\norm{v_1}\norm{u_1-u_2} + \norm{u_2}\norm{v_1-v_2}}\\
&\leq (1+\Lip_F B^2)\norm{u_1-u_2} + (K+\Lip_F B^2) \norm{v_1-v_2}.
\end{align*}
The term $\norm{\nabla_u G(u_1,v_1) - \nabla_u G(u_2,v_2)}$ can be bounded in a similar way and the result follows.
The final gradient bound is then a direct consequence of Lemma \ref{lem:grad_bound_desc_lemma}.
\end{proof}

The crucial point which makes this Hadamard parameterization attractive is that in the fine grids setting, $\Lip_F$ and $K$ typically have no dependence on the grid discretization or the underlying dimension. This implies that the Hadamard parametrization leads to grid-free and dimension-free convergences rate on the gradient. Consider the case of trivial groups $\Gg = \{\{j\}\}_{j=1}^n$, $F(\xp) = F_0(A\xp)$,  $\nabla F(\xp) = A^\top \nabla F_0(A\xp)$ and
$$
	\Lip_F \leq \norm{A}_{1\to 2}^2 \Lip_{F_0}
$$
where $\Lip_{F_0}$ is the Lipschitz constant of $F_0$ with respect to the Euclidean norm and $\norm{A}_{1\to 2} \leq 1$ if the columns of $A$ are normalised. For the Fourier example mentioned in \eqref{eq:fouriersys}, $F_0$ is the quadratic function $\norm{\cdot - y}_2^2$  and we can take $\Lip_F = 1$.

\subsection{The Hadamard flow: connection with mirror descent}
\label{sec:hypentropy-mirror}

In this section, we consider the case of $\ell_1$ regularization~\eqref{sec:l1setting} (trivial group structure). 
The goal of this section is to highlight the connection to mirror descent (Proposition  \ref{prop:flow}). Based on this connection, we show in Proposition \ref{prop:rate} that convergence of the objective is controlled by the convergence of $\nabla G$.
This analysis does not carry over the group Lasso case, because the evolution of the flow on $(u,v)$ cannot be mapped back to a differential equation on the initial variable $x$.
Specialized to the case of $L=\Id$, the Hadamard parametrized function  is 
$$
	G(u,v) = \min_{u,v\in\RR^n} F(u\cdot v)+ \frac{\lambda}{2}\norm{u}_2^2 + \frac{\lambda}{2} \norm{v}_2^2.
$$
Note that letting $\tau \to 0$, the continuous flow equations of \eqref{eq:gd_uv} are
\begin{align}
\dot u(t) &= - \lambda u(t) - v(t) \cdot  \nabla F( u\cdot v), \label{eq:u_cts_flow}
\\
\dot v(t) &= - \lambda v(t) - u(t) \cdot  \nabla F( u\cdot v) . \label{eq:v_cts_flow}
\end{align}

The following propositions show that the $L_2$ flow on $G$ corresponds to ``mirror descent'' with the generalized hyperbolic entropy function $\eta^D_{\gamma(t)}$.

\begin{prop}\label{prop:flow}
Let $x(t)\eqdef u(t) \cdot v(t)$ where $u,v$ satisfy \eqref{eq:u_cts_flow} and \eqref{eq:v_cts_flow}. For $\gamma>0$, let $\eta_\gamma$ denote the hyperbolic entropy function  defined in \eqref{eq:hyp_ent_fn}. The following holds
 \begin{equation}\label{eq:beta_flow_tv}
\frac{\mathrm{d}}{\mathrm{d}t} \nabla \eta_{\gamma(t)}(x(t) ) = -2 \nabla F(x(t))
\end{equation}
where $\gamma(t) = \frac12 \abs{u(0)^2 - v(0)^2} \exp(-2\lambda t)$.
\end{prop}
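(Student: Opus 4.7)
The plan is a direct verification by chain rule, but the key step is to identify a conserved quantity which reveals the relationship between $u^2+v^2$, $x$, and $\gamma(t)$.

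First, I would compute $\dot x(t) = \dot u \cdot v + u \cdot \dot v$. Substituting the flow equations \eqref{eq:u_cts_flow}--\eqref{eq:v_cts_flow} gives, after the cross terms combine,
$$\dot x = -2\lambda x - (u^2+v^2)\, \nabla F(x).$$

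Next I would look for a quantity that decouples from $\nabla F$. A short calculation shows
$$\frac{d}{dt}(u^2 - v^2) = 2u\dot u - 2v\dot v = -2\lambda(u^2-v^2),$$
since the $\nabla F$ contributions cancel. Hence $u(t)^2 - v(t)^2 = (u(0)^2 - v(0)^2)e^{-2\lambda t}$, which means exactly that $\gamma(t) = \tfrac{1}{2}|u(t)^2 - v(t)^2|$ for all $t$. Combined with the algebraic identity $(u^2+v^2)^2 = (u^2-v^2)^2 + (2uv)^2$, this yields
$$u(t)^2 + v(t)^2 = 2\sqrt{\gamma(t)^2 + x(t)^2},$$
so that $\dot x = -2\lambda x - 2\sqrt{\gamma^2+x^2}\,\nabla F(x)$.

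Finally, since $\nabla \eta_{\gamma}(x) = \mathrm{arcsinh}(x/\gamma)$ componentwise, I would apply the chain rule treating $\gamma$ as time-dependent:
$$\frac{d}{dt}\mathrm{arcsinh}(x/\gamma) = \frac{1}{\sqrt{1+(x/\gamma)^2}}\cdot\frac{\dot x \gamma - x \dot \gamma}{\gamma^2}.$$
Plugging in $\dot \gamma = -2\lambda \gamma$ and the expression for $\dot x$ above, the $-2\lambda x\gamma$ piece of $\dot x \gamma$ cancels with $-x\dot\gamma = 2\lambda x\gamma$, leaving
$$\dot x\gamma - x\dot\gamma = -2\gamma\sqrt{\gamma^2+x^2}\,\nabla F(x),$$
and the factor $\sqrt{1+(x/\gamma)^2} = \sqrt{\gamma^2+x^2}/\gamma$ in the denominator cancels the square-root, giving $-2\nabla F(x)$ componentwise, which is \eqref{eq:beta_flow_tv}.

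The only subtlety I anticipate is justifying that $\gamma(t) > 0$ so that $\eta_{\gamma(t)}$ and the chain rule manipulation are well defined; this requires assuming $|u_i(0)| \ne |v_i(0)|$ coordinatewise, which is exactly the initialization condition the paper emphasizes (and is generic under random initialization). For coordinates where $u_i(0)^2 = v_i(0)^2$, one has $\gamma_i(t) \equiv 0$, in which case $x_i \equiv 0$ or $x_i$ stays of constant sign and the identity degenerates; the statement should be read coordinatewise on the coordinates where $\gamma_i(0) > 0$.
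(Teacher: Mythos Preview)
Your proof is correct and follows essentially the same route as the paper's: compute $\dot x$, identify the exponentially decaying quantity $u^2-v^2$, use $(u^2+v^2)^2=(u^2-v^2)^2+4x^2$ to get $u^2+v^2=2\sqrt{x^2+\gamma^2}$, and finish by chain rule. The only cosmetic difference is in the last step, where the paper rewrites $\mathrm{arcsinh}(x/\gamma(t))=\eta_c'(x\,e^{2\lambda t})$ for a fixed constant $c$ before differentiating, whereas you differentiate the quotient $x/\gamma$ directly; the two computations are equivalent.
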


\begin{proof}
From the flow equations \eqref{eq:u_cts_flow} and \eqref{eq:v_cts_flow},
\begin{equation}\label{eq:z_eqn}
\dot x(t)= \dot u(t) \cdot v(t) + \dot v(t)\cdot u(t)
= -( u(t)^2 +v(t)^2)  \cdot \nabla F(x(t)) -2  x(t),
\end{equation}
Note that $(u(t)^2 + v(t)^2)^2 - 4 u(t)^2\cdot  v(t)^2 = (u(t)^2 - v(t)^2)^2$ and
\begin{align*}
&\frac{\mathrm{d}}{\mathrm{d}t}[(u(t)^2 - v(t)^2)^2] = -  4\lambda(u(t)^2   - v(t)^2 )^2
\end{align*}
and so, $(u(t)^2 - v(t)^2)^2 = \exp(-4\lambda t)  (u(0)^2 - v(0)^2)^2$ which implies that
$$
(u(t)^2 + v(t)^2)^2  = 4 x(t)^2 +  c^2\exp(-4\lambda t)  \qwhereq c\eqdef  \abs{u(0)^2 - v(0)^2}.
$$
By denoting $\gamma(t) \eqdef \frac12 c \exp(-2\lambda t)$, the equation \eqref{eq:z_eqn} can be rewritten as
 \begin{equation}\label{eq:zflow}
\frac{\pa{\dot x(t) + 2\lambda x(t)}}{\sqrt{ x(t)^2 +  \gamma(t)^2}}  =-2\nabla F(x(t)).
\end{equation}  
Finally, 
\begin{align*}
\frac{\mathrm{d}}{\mathrm{d}t} \nabla \eta_{\gamma(t)}(x(t) ) &=
\frac{\mathrm{d}}{\mathrm{d}t} \nabla \eta_c(x(t) \exp(2\lambda t)) \\
&= \left(\eta_c''\pa{x(t) \exp(2\lambda t)} \exp(2\lambda t) \pa{\dot x(t) + 2 \lambda  x(t) } \right)\\
&=  \frac{\pa{\dot x(t) + 2 \lambda  x(t) } }{ \sqrt{x(t)^2 + \gamma(t)^2}} =  -2 \nabla F(x(t)).
\end{align*}  
\end{proof}

\begin{rem}[{Algorithmic regularisation properties}] \label{alg_reg}
Following \cite{azulay2021implicit}, we make some informal comments on the significance of Proposition \ref{prop:flow} when
 $\lambda = 0$, as inducing an ``implicit bias'' selecting a particular solution to the linear system $Ax=y$. Here,  $\gamma(t) = c\eqdef \frac12 \abs{u(0)^2 - v(0)^2}$ is constant for all $t$ and 
$$
\nabla \eta_{c} (x(t)) - \nabla \eta_{c}(x(0)) =-2  A^\top r(t) \qwhereq r(t) \eqdef \int_0^t A  x(s) \mathrm{d}s - y, 
$$
which, assuming that $x(t)$ converges to $x_*$ such that $Ax_*  = y$, is the optimality condition for
$$
\min_{x} D_{\eta_c}(x ,x(0)) \quad \text{s.t.} \quad Ax = y, 
$$
where $D_{\eta_c}(x,x')$ is the Bregman divergence associated to  $\eta_c$. So, even without explicit regularisation,  the flow $x(t)$ defined by \eqref{eq:u_cts_flow}, \eqref{eq:v_cts_flow} is regularised by $\eta_c$.
\end{rem}

\begin{rem}[Difference of squares parameterization]
Another parameterization for the lasso is the \textit{squared-parametrization}\cite{chizat2021convergence}: let $x = r\cdot r - s\cdot s$ and perform $L_2$ gradient flow on
\begin{equation}\label{eq:squared}
 F(r\cdot r - s\cdot s) + \lambda \norm{r}^2 + \lambda \norm{s}^2.
\end{equation}
By writing $r =\frac12( u+v)$ and  $s =\frac12( u-v)$, we have
$
r\cdot r - s\cdot s = u\cdot v  
$ and one can observe that this is equivalent to the Hadamard parameterization. Note however that this equivalence is only in the case of the Lasso and the Hadamard parametrization can be used to handle more complex regularisers such as the group  $\ell_1$ norm.

%
\end{rem}

We saw in Proposition \ref{prop:flow} that in the continuous time limit, gradient descent on the Hadamard parametrization can be interpreted as mirror descent with a varying entropy function. For fixed timestep, one can write for $\xp_k\eqdef u_k \cdot v_k$
$$
	\xp_{k+1} = \xp_k - \tau H_k^{-1} \nabla F(u_k \cdot v_k)  + \tau^2 g_{u_k} \cdot g_{v_k} \qwhereq H_k = \diag( 1/(u_k^2+v_k^2) ).
$$
Ignoring the  $\tau^2$ term, one can view this as variable metric descent on $x_k$. By making use of this link, we can relate the convergence of the objective on $x_k$ to the convergence of the gradient  of the overparametrized function $\nabla G$ as follows.

\begin{prop}\label{prop:rate}
Suppose that $\Gg = \ens{\{j\}}_{j=1}^p$.
Assume that $F$ satisfies \eqref{eq:lipschitz} and \eqref{eq:grad_constr}.
Let $\tau = 1/\pa{\kappa \Lip_G}$ where $\Lip_G$ upper bounds the Lipschitz constant of $\nabla G$ and $\kappa = \max(1,(1+K^2)/\Lip_G) $ (note  that $\Lip_G =  \Oo(K+\Lip_F B^2) $ by Proposition \ref{prop:lip_g}). Then,
$$
\Phi(\xp_k) - \lim_{k\to\infty } \Phi(\xp_k) \leq C \sum_{j=k}^{\infty} \norm{\nabla G(u_j, v_j)}^2 + C\rho^k.
$$
with $\rho \eqdef 1-\frac{1}{\kappa M_G}$.
In particular, if $\norm{\nabla G(u_k, v_k)}=\Oo(1/k)$, then the objective converges at rate $\Oo(1/k)$.
\end{prop}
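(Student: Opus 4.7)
The plan is to decompose $\Phi(x_k)-\Phi_\infty$ into two pieces: the ``gap'' $G(u_k,v_k)-\Phi(x_k)$, which I will show decays geometrically, and $G(u_k,v_k)-G_\infty$, which is controlled by the tail sum of squared gradients via a descent lemma argument. First, by expanding,
\[
\mathrm{gap}_k := G(u_k,v_k) - \Phi(x_k) = \tfrac{1}{2}\sum_i\bigl(u_{k,i}^2 + v_{k,i}^2 - 2|u_{k,i}v_{k,i}|\bigr) = \tfrac{1}{2}\sum_i\bigl(|u_{k,i}|-|v_{k,i}|\bigr)^2,
\]
and using the elementary inequality $(|a|-|b|)^2 \leq ||a|-|b||\cdot(|a|+|b|) = |a^2-b^2|$, one gets $\mathrm{gap}_k \leq \tfrac{1}{2}\|u_k^2 - v_k^2\|_1$.

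The crux is then to show geometric decay of $u_k^2 - v_k^2$. Substituting the componentwise updates $u_{k+1,i}=(1-\tau)u_{k,i}-\tau v_{k,i}\nabla F(x_k)_i$ and $v_{k+1,i}=(1-\tau)v_{k,i}-\tau u_{k,i}\nabla F(x_k)_i$, the cross-terms cancel and yield the key identity
\[
u_{k+1,i}^2 - v_{k+1,i}^2 = \bigl((1-\tau)^2 - \tau^2\,\nabla F(x_k)_i^2\bigr)\bigl(u_{k,i}^2 - v_{k,i}^2\bigr).
\]
A short case analysis (splitting on whether $(1-\tau)^2$ dominates $\tau^2\nabla F_i^2$ or not), combined with the uniform bound $|\nabla F(x_k)_i| \leq K$ from \eqref{eq:grad_constr} and the stepsize condition $\tau \leq 1/(1+K^2)$ (which is enforced by $\kappa \geq (1+K^2)/M_G$), yields the contraction factor $|(1-\tau)^2-\tau^2\nabla F(x_k)_i^2| \leq 1-\tau = \rho$. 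Iterating gives $|u_{k,i}^2-v_{k,i}^2|\leq \rho^k|u_{0,i}^2-v_{0,i}^2|$, hence $\mathrm{gap}_k \leq C_1\rho^k$ with $C_1 = \tfrac{1}{2}\|u_0^2-v_0^2\|_1$. In particular $\mathrm{gap}_k \to 0$, so $\Phi_\infty = G_\infty$ exists.

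Next, for the $G$-residual, the descent lemma with $\tau \leq 1/M_G$ gives the two-sided inequality
\[
-\tfrac{3\tau}{2}\|\nabla G(u_k,v_k)\|^2 \leq G(u_{k+1},v_{k+1}) - G(u_k,v_k) \leq -\tfrac{\tau}{2}\|\nabla G(u_k,v_k)\|^2.
\]
Telescoping the upper bound on $G(u_k,v_k)-G(u_{k+1},v_{k+1}) \leq \tfrac{3\tau}{2}\|\nabla G\|^2$ from $j=k$ to infinity yields $G(u_k,v_k) - G_\infty \leq \tfrac{3}{2\kappa M_G}\sum_{j\geq k}\|\nabla G(u_j,v_j)\|^2$. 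Writing $\Phi(x_k)-\Phi_\infty = (G(u_k,v_k)-G_\infty) - \mathrm{gap}_k$, the upper side of the claim follows because $\mathrm{gap}_k \geq 0$, while the lower side $\Phi(x_k)-\Phi_\infty \geq -\mathrm{gap}_k \geq -C_1\rho^k$ comes from the geometric bound. Setting $C = \max(\tfrac{3}{2\kappa M_G}, C_1)$ completes the proof. The ``in particular'' is immediate: if $\|\nabla G(u_k,v_k)\| = O(1/k)$ then $\sum_{j\geq k}\|\nabla G\|^2 = O(1/k)$, which dominates the exponentially decaying $\rho^k$.

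The main obstacle is the case analysis establishing $|(1-\tau)^2-\tau^2\nabla F_i^2|\leq 1-\tau$: this is the sole place where the sharper stepsize requirement $\tau \leq 1/(1+K^2)$, encoded in the factor $\kappa = \max(1,(1+K^2)/M_G)$ rather than just $\tau = 1/M_G$, is actually used. Everything else (descent lemma, gap identity, elementary inequality) is routine once one has identified the algebraic miracle that the cross-terms cancel in the recursion for $u_i^2-v_i^2$.
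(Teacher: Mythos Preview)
Your argument is correct and takes a genuinely different, more elementary route than the paper. The paper derives Proposition~\ref{prop:rate} from an intermediate variable-metric result (Proposition~\ref{prop:rate2}): it defines an auxiliary quantity $\Phi_k \eqdef F(x_k)+2\norm{x_k}_{H_k}^2$ with $H_k=\diag(1/(u_k^2+v_k^2))$, bounds $\Phi_k-\Phi(\bar x)$ for arbitrary $\bar x$ via a metric-norm estimate, then specializes to $\bar x=x_{k+1}$ and telescopes. The geometric decay of $\Phi(x_k)-\Phi_k$ is handled there via Lemma~\ref{lem:shrinkage_ukvk}, which tracks the aggregate quantity $\bigl|\norm{u_k}^2-\norm{v_k}^2\bigr|$. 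Your approach instead works directly with the gap $G(u_k,v_k)-\Phi(x_k)$ and establishes the sharper \emph{componentwise} contraction $u_{k+1,i}^2-v_{k+1,i}^2=\bigl((1-\tau)^2-\tau^2\nabla F_i^2\bigr)(u_{k,i}^2-v_{k,i}^2)$, which is the discrete analogue of the continuous-time identity in Proposition~\ref{prop:flow}; combined with the standard two-sided descent lemma on $G$ this gives the result without any mirror-descent machinery. What your route buys is simplicity and a tighter intermediate bound (you control $\norm{u_k^2-v_k^2}_1$ rather than $\bigl|\norm{u_k}^2-\norm{v_k}^2\bigr|$, which is what the gap estimate actually needs). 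What the paper's route buys is the stronger statement of Proposition~\ref{prop:rate2}, valid for arbitrary comparison points $\bar x$, which may be of independent interest for understanding the trajectory geometry.
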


The proof of this proposition can be found in Section~\ref{sec:proof-proprate}. 
This proposition along with \eqref{upper_bound_grad_sum} shows that the  objective convergence rate is equivalent to the convergence of  the tail sum of the gradients, but, at present, we do not have sufficiently strong convergence results on this gradient sum to obtain convergence rates. 

Figure \ref{fig:ISTA} provides some empirical finding suggesting that indeed, $\norm{\nabla G(u_k, v_k)}$ is of the order $\Oo(1/k)$.
The  problem considered is the Fourier system \eqref{eq:fouriersys} with $n=300$, $m=2$, $\lambda = \lambda_{\max}/10$ and the underlying signal to recover being 1-sparse (a single Dirac mass). 
For reference, the dashed lines show the $1/k$ and $1/k^{2/3}$ convergence lines. Moreover, ``Hadamard Grad'' on the left figure shows how $\norm{\nabla G(u_k,v_k)}$ converges -- it matches the $1/k$ line and converges at the same rate as the objective function. Note that both Hyperbolic entropy and the Hadamard flow exhibit $\Oo(1/k)$ convergence, however, one practical advantage of Hadamard and Noncvx-pro is that since these methods are based on Euclidean geometry, one can apply standard tools for acceleration, such as Barzilai-Borwein (BB) stepsize~\cite{barzilai1988two}. Finally, observe that ISTA converges at rate $\Oo(1/k^{2/3})$ as proved in \cite{chizat2021convergence}, and as can be seen on the right figure, the use of BB stepsize also accelerates ISTA (although there is no theoretical proof of this).

\begin{figure}
\begin{center}
\begin{tabular}{cc}
\includegraphics[width=0.45\linewidth]{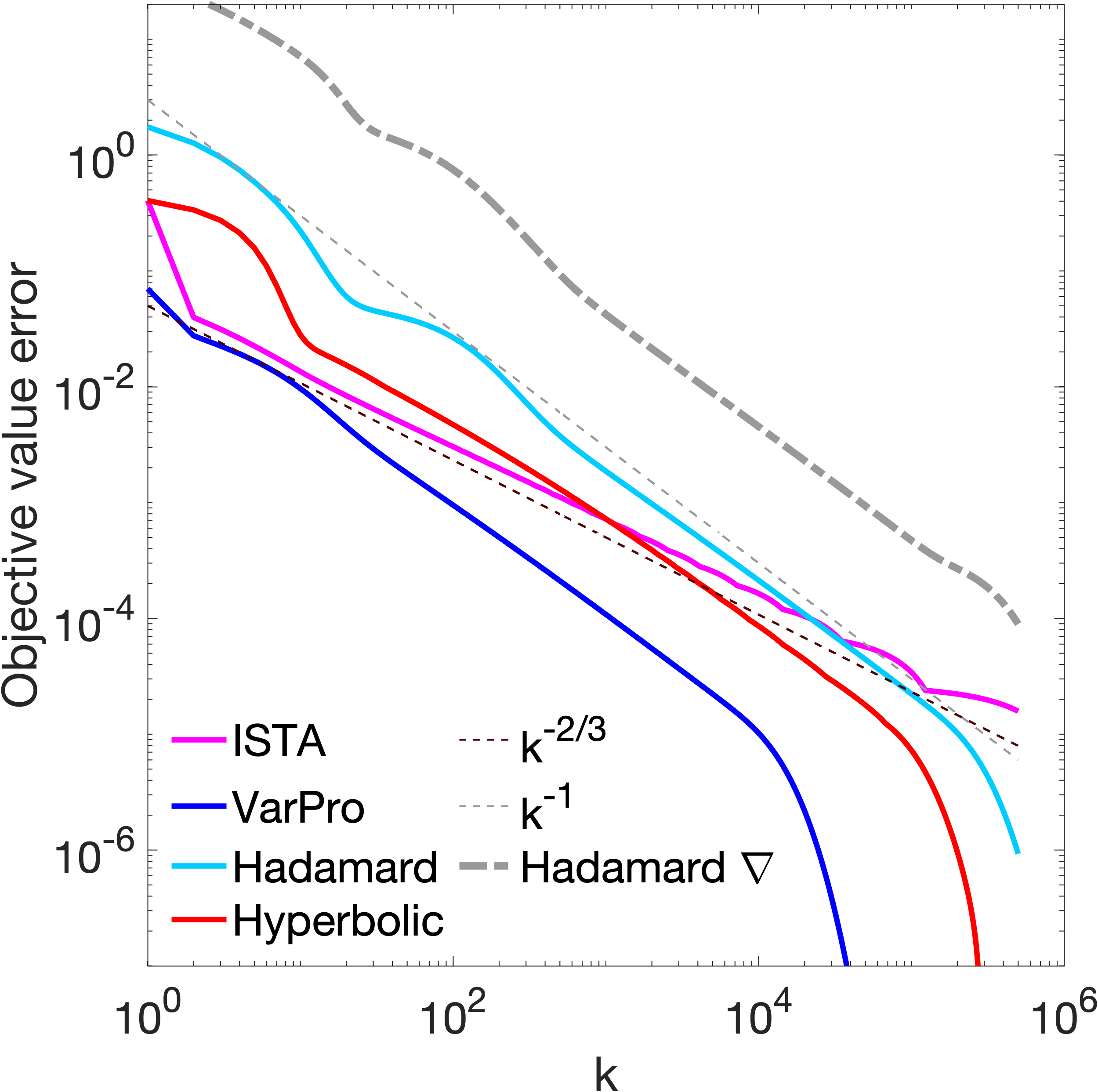}&
\includegraphics[width=0.45\linewidth]{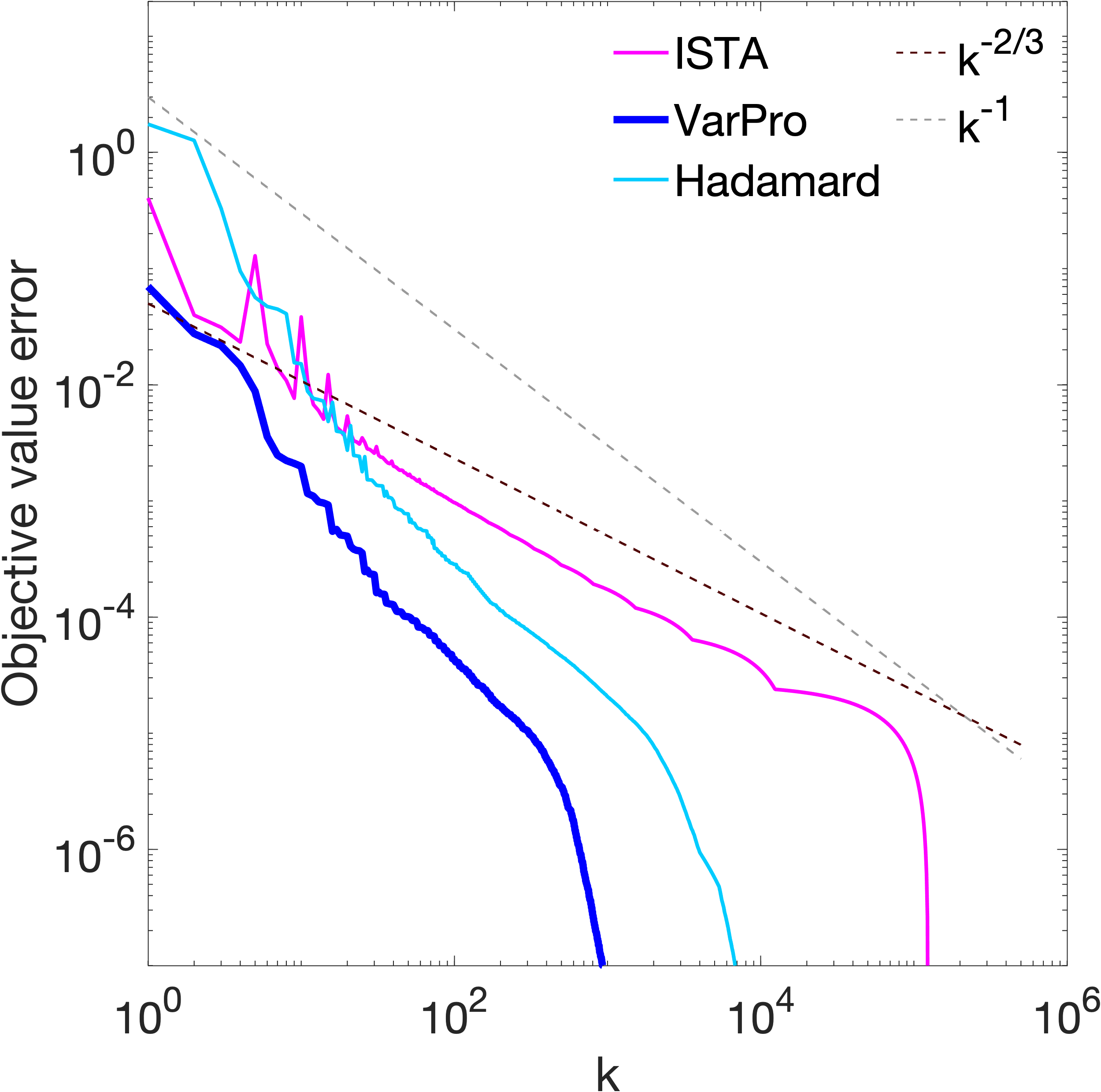}\\
Fixed step sizes & BB step sizes
\end{tabular}
\end{center}
\caption{Comparison of: ISTA, Gradient descent for Noncvx-pro, gradient descent on the Hadamard parameterization,  Bregman projected gradient with the Hyperbolic entropy. 
The left figure shows the objective convergence against iterations $k$ with \textit{fixed} stepsize, and the right figure corresponds to  Barzilai-Borwein (BB) stepsize.
 \label{fig:ISTA}}
\end{figure}

\subsection{Proof of Proposition \ref{prop:rate}}
\label{sec:proof-proprate}

Proposition \ref{prop:rate} is a direct consequence of the following stronger result.

\begin{prop}\label{prop:rate2} Suppose that we have a trivial group structure $\Gg = \ens{\{j\}}_{j=1}^p$ and
assume that $F$ satisfies \eqref{eq:lipschitz} and \eqref{eq:grad_constr}.
Let $\tau = 1/\Lip_G$ where $\Lip_G$ upper bounds the Lipschitz constant of $\nabla G$, note that $\Lip_G =  \Oo(K+\Lip_F B^2) $ by Proposition \ref{prop:lip_g}.
Define $H_k = \diag(1/(u_k^2 + v_k^2))$  and $\norm{x}_{H_k}\eqdef \sqrt{\dotp{H_k x}{x}}$.
 Define $\Phi_k \eqdef  F(\xp_k) + 2\norm{\xp_k}_{H_k}^2$ and $\Phi(\xp) \eqdef F(\xp) +\norm{\xp}_1$. To simplify the expression below, assume that $K,\Lip_G,B\geq 1$. For any $\bar \xp$, we have
 \begin{align*}
    (\Phi_k - \Phi(\bar \xp) ) = \Oo\pa{ C \norm{\nabla G(u_k,v_k)}^2   +\sqrt{C} \norm{\xp_k - \bar \xp}_{H_k} \norm{\nabla G(u_k,v_k)}     },
 \end{align*}
  where $C=  K \Lip_G (G(u_0,v_0) - G(u_*,v_*)) $.

\end{prop}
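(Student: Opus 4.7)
The plan is to combine the convexity inequality for $F$ with an algebraic identity that expresses $\nabla F(x_k)$ in terms of the overparametrized gradients $g_{u_k}, g_{v_k}$ via the variable metric $H_k$, then close the proof with a single Cauchy--Schwarz step in the $H_k$-inner product.

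\emph{Step 1 (identity).} From $g_{u_k}=u_k+v_k\odot\nabla F(x_k)$ and $g_{v_k}=v_k+u_k\odot\nabla F(x_k)$, multiplying respectively by $v_k$ and $u_k$ componentwise and adding yields $u_k\odot g_{v_k}+v_k\odot g_{u_k}=2x_k+(u_k^2+v_k^2)\odot\nabla F(x_k)$, which I would invert to the pointwise identity
\[
\nabla F(x_k)=H_k\bigl[u_k\odot g_{v_k}+v_k\odot g_{u_k}-2x_k\bigr].
\]

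\emph{Step 2 (reduction).} Convexity gives $F(x_k)-F(\bar x)\leq\langle\nabla F(x_k),x_k-\bar x\rangle$; substituting the identity and writing $\langle a,b\rangle_{H_k}=\langle H_k a,b\rangle$ produces
\[
F(x_k)-F(\bar x)\leq\langle u_k g_{v_k}+v_k g_{u_k},x_k-\bar x\rangle_{H_k}-2\|x_k\|_{H_k}^2+2\langle x_k,\bar x\rangle_{H_k}.
\]
Adding $2\|x_k\|_{H_k}^2-\|\bar x\|_1$ to both sides assembles $\Phi_k-\Phi(\bar x)$ on the left while cancelling $2\|x_k\|_{H_k}^2$ on the right. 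The AM--GM bound $|x_{k,j}|=|u_{k,j}v_{k,j}|\leq\tfrac12(u_{k,j}^2+v_{k,j}^2)$ gives $|x_{k,j}|/(u_{k,j}^2+v_{k,j}^2)\leq 1/2$, so $2\langle x_k,\bar x\rangle_{H_k}\leq\|\bar x\|_1$, which cancels the remaining $-\|\bar x\|_1$. The residual reads
\[
\Phi_k-\Phi(\bar x)\leq\langle u_k\odot g_{v_k}+v_k\odot g_{u_k},\,x_k-\bar x\rangle_{H_k}.
\]

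\emph{Step 3 (Cauchy--Schwarz and constants).} Cauchy--Schwarz in the $H_k$-inner product, combined with the pointwise inequality $(ug_v+vg_u)^2\leq(u^2+v^2)(g_u^2+g_v^2)$, gives $\|u_k g_{v_k}+v_k g_{u_k}\|_{H_k}^2\leq\|g_{u_k}\|^2+\|g_{v_k}\|^2=\|\nabla G(u_k,v_k)\|^2$, hence
\[
\Phi_k-\Phi(\bar x)\leq\|\nabla G(u_k,v_k)\|\cdot\|x_k-\bar x\|_{H_k}.
\]
To reconcile with the advertised $O\bigl(C\|\nabla G\|^2+\sqrt C\,\|\nabla G\|\,\|x_k-\bar x\|_{H_k}\bigr)$, I would invoke the descent bound $\|\nabla G_k\|^2\leq 2M_G(G(u_0,v_0)-G_\star)=2C/K$ from Proposition~\ref{prop:lip_g}. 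Under the normalisation $K\geq 1$, this gives $\|\nabla G_k\|\lesssim\sqrt C$, so $\|\nabla G\|\cdot\|x_k-\bar x\|_{H_k}$ is absorbed into $\sqrt C\,\|\nabla G\|\,\|x_k-\bar x\|_{H_k}$, and the $C\|\nabla G\|^2$ term covers any slack implicit in the $O(\cdot)$.

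\emph{Main obstacle.} The whole argument hinges on the double cancellation in Step~2: the $2\|x_k\|_{H_k}^2$ contribution from the substitution matches exactly the $2\|x_k\|_{H_k}^2$ inside $\Phi_k$, and the cross-term $2\langle x_k,\bar x\rangle_{H_k}$ must be bounded above by the non-smooth $\ell_1$ term $\|\bar x\|_1$. Both cancellations rest on the single AM--GM ratio bound $|x_{k,j}|/(u_{k,j}^2+v_{k,j}^2)\leq 1/2$, and this is the only place where the $\ell_1$-non-smoothness is absorbed by the Hadamard over-parametrization. Once this is secured, the closing Cauchy--Schwarz step is routine.
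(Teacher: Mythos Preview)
Your proof is correct and in fact more direct than the paper's. Both share the same core inequality: your Step~2 conclusion
\[
\Phi_k-\Phi(\bar x)\;\leq\;\langle u_k\odot g_{v_k}+v_k\odot g_{u_k},\,x_k-\bar x\rangle_{H_k}
\]
is precisely the paper's bound on the term $T_3$, obtained the same way (convexity of $F$ combined with the AM--GM ratio $|x_{k,j}|/(u_{k,j}^2+v_{k,j}^2)\leq 1/2$ to absorb the $\ell_1$ part). The difference lies in what happens next. The paper embeds this inequality in a telescoping expansion $\|x_{k+1}-\bar x\|_{H_k}^2=\|x_k-\bar x\|_{H_k}^2+T_1+T_2+T_3$ coming from the discrete update $x_{k+1}=x_k-\tau(u_kg_{v_k}+v_kg_{u_k})+\tau^2 g_{u_k}g_{v_k}$; the second-order term $\tau^2 g_{u_k}g_{v_k}$ then forces a separate lemma (Lemma~\ref{lem:grad_bounds}) to control $T_1$, $T_2$ and the telescope difference, and this is where the $\|\nabla G\|^2$ contribution and the $K$-dependent constants $C_0,C_1$ arise. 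You instead apply Cauchy--Schwarz directly to the inner product, using the sharp pointwise bound $(ug_v+vg_u)^2\leq(u^2+v^2)(g_u^2+g_v^2)$, and land on the cleaner estimate $\Phi_k-\Phi(\bar x)\leq\|\nabla G(u_k,v_k)\|\,\|x_k-\bar x\|_{H_k}$ with no quadratic term and no extraneous constants. Your route bypasses Lemma~\ref{lem:grad_bounds} entirely.

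One small caveat about Step~3: you absorb your bound into the stated form $O(\sqrt{C}\,\|\nabla G\|\,\|x_k-\bar x\|_{H_k})$ by claiming $1=O(\sqrt{C})$, but $C=KM_G(G(u_0,v_0)-G_\star)$ is not bounded below by an absolute constant (it vanishes when one initialises near a minimiser), so this absorption is not literally valid. This is harmless, though: your inequality is simply \emph{tighter} than the stated result, and the $\sqrt{C}$, $C$ prefactors in the statement are artifacts of the paper's telescoping detour rather than intrinsic features of the bound.
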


Note that $\Phi_k$ approximates $\Phi(\xp_k)$, indeed,  $\Phi_k=2\norm{\xp_k}_{H_k} = \sum_{i}2 (u_k\odot  v_k)_i^2/(u_k^2+ v_k^2)_i$. Since  $a,b>0$ and $a\geq b$ implies that
\begin{align*}
& ab -  \frac{2a^2 b^2 }{a^2+b^2}   = \frac{ab}{a^2+b^2}(a-b)^2 \leq a^2 - b^2,
\end{align*}
we have $\Phi(\xp_k) - \Phi_k \leq \abs{\norm{u_k}^2 - \norm{v_k}^2} = \Oo(\rho^k). $ Finally, by plugging in $\bar \xp = \xp_{k+1}$ to the above proposition  and summing over $k, k+1,k+2,\ldots$ yields Proposition \ref{prop:rate}. The rest of this section is devoted to proving Proposition \ref{prop:rate2}.

We begin with two lemmas, the first Lemma will be used to show that $\Phi_k-\Phi(\xp_k)$ defined in Proposition \ref{prop:rate2} converges to  0 linearly, while the second lemma  provides several useful bounds in terms of the gradient of $G$. 
\begin{lem}\label{lem:shrinkage_ukvk}
Assume that  $F$ satisfies \eqref{eq:lipschitz}  and \eqref{eq:grad_constr} holds for all $k$.
Let $\kappa>0$ be such that $\kappa \geq \frac{1}{ M_G}(1+K^2 )$ and  let $\tau = \frac{1}{\kappa \Lip_G}$. Then,
$$
\abs{\norm{u_k}^2 - \norm{v_k}^2} = \Oo(\rho^k),
 $$ 
 where $\rho \eqdef 1-\frac{1}{\kappa M_G}$.
\end{lem}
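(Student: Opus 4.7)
The plan is to derive a clean componentwise recurrence for the quantity $u_{k,i}^2-v_{k,i}^2$, show that each coordinate contracts by a factor bounded by $\rho$, and then sum over $i$.

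First I would write out the iteration coordinatewise. In the trivial group setting, \eqref{eq:gd_uv} reads
\begin{equation*}
u_{k+1,i}=u_{k,i}-\tau\bigl(u_{k,i}+v_{k,i}[\nabla F(u_k\odot v_k)]_i\bigr),
\quad
v_{k+1,i}=v_{k,i}-\tau\bigl(v_{k,i}+u_{k,i}[\nabla F(u_k\odot v_k)]_i\bigr).
\end{equation*}
Squaring and subtracting, the cross terms cancel (they are symmetric in $u,v$), giving
\begin{equation*}
u_{k+1,i}^2-v_{k+1,i}^2
=(1-2\tau)(u_{k,i}^2-v_{k,i}^2)+\tau^2\bigl(g_{u_k,i}^2-g_{v_k,i}^2\bigr).
\end{equation*}
The key algebraic observation is that $g_{u_k,i}^2-g_{v_k,i}^2$ itself factors through $u_{k,i}^2-v_{k,i}^2$: writing $r_i\eqdef [\nabla F(u_k\odot v_k)]_i$, one has $g_{u_k,i}-g_{v_k,i}=(u_{k,i}-v_{k,i})(1-r_i)$ and $g_{u_k,i}+g_{v_k,i}=(u_{k,i}+v_{k,i})(1+r_i)$, hence
\begin{equation*}
g_{u_k,i}^2-g_{v_k,i}^2=(u_{k,i}^2-v_{k,i}^2)(1-r_i^2).
\end{equation*}
Substituting yields the one-step multiplicative relation
\begin{equation*}
u_{k+1,i}^2-v_{k+1,i}^2=(u_{k,i}^2-v_{k,i}^2)\bigl[(1-\tau)^2-\tau^2 r_i^2\bigr].
\end{equation*}

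Next I would bound the factor $|(1-\tau)^2-\tau^2 r_i^2|$ uniformly in $i$ and $k$. From the descent property \eqref{eq:ball_constr} and \eqref{eq:grad_constr}, $|r_i|\le K$ along the trajectory. The assumption $\kappa\ge (1+K^2)/M_G$ gives $\tau=1/(\kappa M_G)\le 1/(1+K^2)$, in particular $\tau\le 1/2$ and $\tau^2 K^2\le 1/4$. Since both $(1-\tau)^2$ and $\tau^2 r_i^2$ lie in $[0,(1-\tau)^2]\cup[0,\tau^2K^2]$ and $\tau^2 K^2\le 1/4\le (1-\tau)^2$, we get
\begin{equation*}
\bigl|(1-\tau)^2-\tau^2 r_i^2\bigr|\le (1-\tau)^2\le 1-\tau=\rho,
\end{equation*}
using $\tau\le 1$ in the last step.

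Finally I would telescope: by induction,
\begin{equation*}
|u_{k,i}^2-v_{k,i}^2|\le \rho^k\,|u_{0,i}^2-v_{0,i}^2|,
\end{equation*}
and summing over $i$ yields
\begin{equation*}
\bigl|\|u_k\|^2-\|v_k\|^2\bigr|\le\sum_i|u_{k,i}^2-v_{k,i}^2|\le \rho^k\,\|u_0^2-v_0^2\|_1=\Oo(\rho^k).
\end{equation*}
The main (mild) obstacle is the bookkeeping in step two, namely verifying the numerical inequality $|(1-\tau)^2-\tau^2 r_i^2|\le 1-\tau$ for the stated range of $\tau$; the identity in step one is a short computation and the telescoping is routine.
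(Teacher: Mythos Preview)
Your approach is essentially the same as the paper's: derive a one-step recursion for the difference of squares and show it contracts by a factor $\rho=1-\tau$. The paper works directly with the global quantity $\|u_{k+1}\|^2-\|v_{k+1}\|^2$ and bounds $\|g_{u_k}\|^2-\|g_{v_k}\|^2$ by $(1+\|\nabla F\|_\infty^2)\,|\|u_k\|^2-\|v_k\|^2|$; your componentwise version is cleaner, since the exact identity $g_{u_k,i}^2-g_{v_k,i}^2=(u_{k,i}^2-v_{k,i}^2)(1-r_i^2)$ makes the multiplicative recursion transparent and avoids any issue with signs when summing over $i$.

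One minor bookkeeping gap: your claim that $\tau^2K^2\le 1/4\le(1-\tau)^2$ uses $\tau\le 1/2$, which only follows from $\tau\le 1/(1+K^2)$ when $K\ge 1$. For $K<1$ the factor $(1-\tau)^2-\tau^2 r_i^2$ can actually be negative. The fix is immediate: from $\tau(1+K^2)\le 1$ you get $\tau^2K^2\le\tau(1-\tau)\le 1-\tau$, and $(1-\tau)^2\le 1-\tau$ since $\tau\in(0,1)$; hence
\[
\bigl|(1-\tau)^2-\tau^2 r_i^2\bigr|\le\max\bigl((1-\tau)^2,\;\tau^2K^2\bigr)\le 1-\tau=\rho,
\]
and the rest of your argument goes through unchanged.
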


\begin{proof}
Notice that 
\begin{align*}
\norm{u_{k+1}}^2  - \norm{v_{k+1}}^2 &= (1-2\tau) (\norm{u_k}^2 -\norm{v_k}^2) + \tau^2 (\norm{g_{u_k}}^2 - \norm{g_{v_k}}^2  )\\
& \leq  \abs{\norm{u_k}^2 -\norm{ v_k}^2 }(1-2\tau+ \tau^2(1+\norm{\nabla F'(\xp_k)}_{\infty}^2  ))
\end{align*}
By choosing $\tau = 1/(\kappa M_G)$,
$$
(1-2\tau+ \tau^2(1+\norm{\nabla F'(\xp_k)}_{\infty}^2 ) ) \leq 1-\frac{1}{\kappa M_G} \eqdef \rho
$$
and so, $\abs{\norm{u_{k+1}}^2  -\norm{ v_{k+1}}^2} \leq \rho \abs{\norm{u_{k}}^2  -\norm{ v_{k}}^2} = \Oo(\rho^{k+1})$. 
\end{proof}

\begin{lem}\label{lem:grad_bounds}
Let  $H_k = \diag(1/(u_k^2+v_k^2))$. We have the following bounds
\begin{itemize}
\item $\norm{u_k \cdot  g_{v_k}}_{H_k}^2 \leq \norm{g_{v_k}}^2$ and  $\norm{v_k\cdot g_{u_k}}_{H_k}^2 \leq \norm{g_{u_k}}^2$
\item $
\norm{ g_{u_k} \cdot g_{v_k} }_{H_k}^2    \leq   C_0 \norm{g_{v_k}}^2
$, where $C_0\eqdef 2\max(1, K^2)$.
\item $\norm{ \xp_k -\xp_{k+1} }_{H_k}^2 \leq \tau^2  C_1 \norm{\nabla G(u_k,v_k)}^2$ where $C_1 \eqdef  (2+\tau  +2\tau(2+\tau) \max(1,K)) $.
\end{itemize}
\end{lem}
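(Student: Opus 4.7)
The three bounds all reduce to pointwise estimates with the common denominator $(u_k)_i^2+(v_k)_i^2$, combined with the explicit forms $g_{u_k} = u_k + v_k\odot\nabla F(x_k)$ and $g_{v_k} = v_k + u_k\odot\nabla F(x_k)$. For (i) the observation is simply that $(u_k)_i^2\leq (u_k)_i^2+(v_k)_i^2$, so
$$
\norm{u_k\odot g_{v_k}}_{H_k}^2 = \sum_i \frac{(u_k)_i^2\,(g_{v_k})_i^2}{(u_k)_i^2+(v_k)_i^2} \leq \sum_i (g_{v_k})_i^2 = \norm{g_{v_k}}^2,
$$
and the companion bound on $\norm{v_k\odot g_{u_k}}_{H_k}^2$ is identical.

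For (ii), I would use $(a+b)^2\leq 2(a^2+b^2)$ together with the uniform gradient bound \eqref{eq:grad_constr} to derive the pointwise estimate
$$
(g_{u_k})_i^2 \leq 2(u_k)_i^2 + 2K^2(v_k)_i^2 \leq 2\max(1,K^2)\bigl((u_k)_i^2+(v_k)_i^2\bigr) = C_0\bigl((u_k)_i^2+(v_k)_i^2\bigr).
$$
Dividing by $(u_k)_i^2+(v_k)_i^2$, multiplying by $(g_{v_k})_i^2$, and summing over $i$ yields $\norm{g_{u_k}\odot g_{v_k}}_{H_k}^2\leq C_0\norm{g_{v_k}}^2$.

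For (iii), I would first substitute the updates \eqref{eq:gd_uv} into $x_{k+1}=u_{k+1}\odot v_{k+1}$ to obtain
$$
x_k - x_{k+1} = \tau\bigl(u_k\odot g_{v_k} + v_k\odot g_{u_k}\bigr) - \tau^2\,g_{u_k}\odot g_{v_k}.
$$
The triangle inequality in the $H_k$-norm, combined with (i) and (ii), then gives
$$
\norm{x_k-x_{k+1}}_{H_k} \leq \tau\norm{g_{u_k}} + \tau\bigl(1+\tau\sqrt{C_0}\bigr)\norm{g_{v_k}}.
$$
Squaring this and using the identity $\norm{g_{u_k}}^2+\norm{g_{v_k}}^2=\norm{\nabla G(u_k,v_k)}^2$, together with a suitable Young-inequality split of the cross term, produces the desired bound of the form $\tau^2 C_1\norm{\nabla G(u_k,v_k)}^2$. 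The only delicate point is calibrating the Young split (and using $\sqrt{C_0}\leq\sqrt 2\max(1,K)$) to recover exactly $C_1 = 2+\tau+2\tau(2+\tau)\max(1,K)$; this is pure bookkeeping and the main (modest) obstacle, with no further structural obstacle beyond the reuse of (i) and (ii).
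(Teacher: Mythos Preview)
Your argument for parts (i) and (ii) is identical to the paper's: the same pointwise inequality $(u_k)_i^2\le (u_k)_i^2+(v_k)_i^2$ for (i), and the same use of $(a+b)^2\le 2(a^2+b^2)$ combined with the bound $\|\nabla F(x_k)\|_\infty\le K$ for (ii).

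For (iii) the paper proceeds slightly differently: instead of applying the triangle inequality and then squaring, it expands
\[
\norm{u_k\cdot g_{v_k}+v_k\cdot g_{u_k}-\tau\,g_{u_k}\cdot g_{v_k}}_{H_k}^2
\]
directly and bounds each of the three cross terms by Young's inequality (with parameter $1$) \emph{in the $H_k$-inner product}, obtaining
\[
(2+\tau)\bigl(\norm{u_k\cdot g_{v_k}}_{H_k}^2+\norm{v_k\cdot g_{u_k}}_{H_k}^2\bigr)+(\tau^2+2\tau)\norm{g_{u_k}\cdot g_{v_k}}_{H_k}^2,
\]
and only then invokes (i) and (ii). Your route---triangle inequality, apply (i)--(ii) to pass to Euclidean norms, then square---is a legitimate variant and yields a bound of the same form $\tau^2 C\,\norm{\nabla G(u_k,v_k)}^2$; the resulting constant differs slightly from the stated $C_1$ depending on how the Young split is calibrated, but this is indeed just bookkeeping (and the paper's own $C_1$ appears to carry a typo $\max(1,K)$ versus $\max(1,K^2)$ anyway). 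Structurally the two proofs are the same.
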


\begin{proof}
Clearly, $\norm{u_k \cdot g_{v_k}}_{H_k}^2 \leq \norm{g_{v_k}}_2^2$ and $$
\norm{ g_{u_k} \cdot g_{v_k} }_{H_k}^2  = \sum_i \pa{\frac{g_{u_k}^2 }{u_k^2+ v_k^2}}_i \pa{g_{v_k}}_i^2   \leq 2\max(1,K^2) \norm{g_{v_k}}^2
$$
\begin{align*}
\text{since} \quad
g_{u_k}^2 &= u_k^2 + v_k^2\cdot \nabla F(\xp_k)^2 + 2\xp_k \cdot \nabla F(\xp_k) \leq 2(  u_k^2 + v_k^2 \cdot \nabla F(\xp_k)^2 ) \\
&\leq 2 \max(1, \nabla F (\xp_k)^2) \cdot (u_k^2+v_k^2).
\end{align*}
Finally, $\frac{1}{\tau}(\xp_{k+1} - \xp_k )=  u_k\cdot  g_{v_k} +  v_k \cdot  g_{u_k}- \tau g_{u_k} \cdot   g_{v_k}$, so that 
\begin{align*}
&\norm{ u_k \cdot   g_{v_k} +  v_k \cdot   g_{u_k}- \tau g_{u_k} \cdot   g_{v_k} }_{H_k}^2 = \norm{u_k \cdot   g_{u_k}}_{H_k}^2 + \norm{v_k\cdot   g_{v_k}}_{H_k}^2  
\\
&\qquad + 2\dotp{u_k\cdot  v_k}{g_{u_k}\cdot  g_{v_k}}_{H_k}^2 + \tau^2\norm{ g_{u_k} \cdot  g_{v_k} }_{H_k}^2 -2\tau \dotp{u_k \cdot  g_{v_k} +  v_k \cdot  g_{u_k}}{ g_{u_k}\cdot  g_{v_k}}_{H_k}^2\\
&\leq (2+\tau)\norm{u_k \cdot  g_{u_k}}_{H_k}^2 + (2+\tau)\norm{v_k \cdot  g_{v_k}}_{H_k}^2+ ( \tau^2+2\tau)\norm{ g_{u_k}\cdot   g_{v_k} }_{H_k}^2 ,
\end{align*}
and the result follows by the preceding bounds.
\end{proof}

\begin{proof}[Proof of Proposition \ref{prop:rate2}]
By multiplying together the two equations in \eqref{eq:gd_uv}, we first interpret  \eqref{eq:gd_uv} as  variable metric descent on $\xp_k = u_k \cdot  v_k$:
$$
\xp_{k+1} = \xp_k - \tau u_k \cdot  g_{v_k} - \tau v_k \cdot  g_{u_k} + \tau^2 g_{u_k} \cdot  g_{v_k}.
$$
Then,
\begin{align*}
\norm{\xp_{k+1} - \bar \xp}_{H_k}^2 &=  \norm{\xp_k - \bar \xp - \tau u_k \cdot  g_{v_k} - \tau v_k\cdot   g_{u_k} + \tau^2 g_{u_k} \cdot  g_{v_k} }_{H_k}^2\\
&= \norm{\xp_k - \bar \xp }_{H_k}^2 +T_1+T_2+T_3
\end{align*}
\begin{align*}
\qwhereq T_1 &\eqdef \norm{\tau u_k\cdot   g_{v_k} + \tau v_k \cdot  g_{u_k}- \tau^2 g_{u_k}\cdot   g_{v_k} }_{H_k}^2 \\
T_2&\eqdef  2\tau^2 \dotp{  g_{u_k} \cdot  g_{v_k} }{H_k(\xp_k - \bar \xp)} \\
T_3&\eqdef - 2\tau \dotp{  u_k\cdot   g_{v_k} +  v_k \cdot  g_{u_k} }{H_k(\xp_k - \bar \xp)}.
\end{align*}
The theorem is simply a consequence of   bounding $T_1,T_2,T_3$ using Lemma \ref{lem:grad_bounds}  and summing the inequality over $k=0,\ldots, T$.  Let $C_0, C_1$ be as in Lemma \ref{lem:grad_bounds}.
Indeed, by Lemma \ref{lem:grad_bounds}, $T_1 \leq \tau^2 C_1 \norm{\nabla G(u_k,v_k)}^2$.
To bound $T_2$, observe that
\begin{align*}
\dotp{g_{u_k} \cdot  g_{v_k}}{ H_k(\xp_k - \bar \xp} &\leq \norm{\xp_k - \bar \xp}_{H_k} \norm{g_{u_k}\cdot g_{v_k}}_{H_k}
=\norm{\xp_k - \bar \xp}_{H_k} \sqrt{C_0 } \norm{g_{v_k}}
\end{align*}
So,
$$
T_2 \leq 2\tau^2 \sqrt{C_0} \norm{\xp_k - \bar \xp}_{H_k} \norm{\nabla G(u_k,v_k)}.
$$
Consider the final term $T_3$: observe that 
$$
	u_k\cdot   g_{v_k} + v_k \cdot  g_{u_k} =  2\xp_k + (u_k^2 + v_k^2) \cdot  \nabla F(\xp_k)
$$
and $\norm{\abs{\xp_k}/(u_k^2 + v_k^2)}_\infty \leq 1/2$.
It follows by convexity of $F$ that
\begin{align*}
&\dotp{  u_k \cdot  g_{v_k} +  v_k \cdot  g_{u_k} }{H_k(\xp_k - \bar \xp)} \\
&= \dotp{ \frac{2\xp_k}{u_k^2+v_k^2}}{\xp_k - \bar \xp} + \dotp{\nabla F(\xp_k)}{\xp_k - \bar \xp}\\
&\geq (2\norm{\xp_k}_{H_k}^2 - \norm{\bar \xp}_1) +  ( F(\xp_k) - F(\bar \xp)) = \Phi_k - \Phi_*.
\end{align*}
So, 
$$
T_3 \leq -2\tau (\Phi_k - \Phi_*).
$$
It follows that
 \begin{align*}
 2\tau(\Phi_k - \Phi_*) \leq& \pa{\norm{\xp_k - \bar \xp}_{H_k}^2 -  \norm{\xp_{k+1} - \bar \xp}_{H_k}^2} \\
 &+ \tau C_1 \norm{\nabla G(u_k,v_k)}^2 + 2\tau^2 \sqrt{C_0} \norm{\xp_k - \bar \xp}_{H_k} \norm{\nabla G(u_k,v_k)}.
 \end{align*}
 Finally,
 \begin{align*}
 \norm{\xp_k - \bar \xp}_{H_k}^2 &-  \norm{\xp_{k+1} - \bar \xp}_{H_k}^2
 = \dotp{\xp_k -\xp_{k+1}}{\xp_k -\bar \xp}_{H_k} +\dotp{\xp_{k}-\xp_{k+1} }{\xp_{k+1} -\bar \xp}_{H_k}\\
 &\leq  \tau \sqrt{C_1}( \norm{\xp_k -\bar \xp}_{H_k} +\norm{\xp_{k+1} -\bar \xp}_{H_k} )\norm{\nabla G(u_k,v_k)}, 
 \end{align*}
and
\begin{equation*}
 	\norm{\xp_{k+1}-\bar \xp}_{H_k}\leq  \norm{\xp_{k+1}-\xp_k}_{H_k} + \norm{\xp_k - \bar \xp}_{H_k} \leq \tau \sqrt{C_1}\norm{\nabla G(u_k,v_k)}+ \norm{\xp_k - \bar \xp}_{H_k}.
\end{equation*}
\end{proof}


\section{Nonsmooth Robust losses}
\label{sec:nonsmooth-robust}

In this section, we describe some generalizations of our method to cope with non-smooth robust losses in Section \ref{sec:nonsmoothloss} and non-convex regularization functionals in Section \ref{sec:nonconvex}.  
These generalizations leverage so-called quadratic variational forms, recalled in Section~\ref{sec:quadvar}, which generalizes the overparameterization formula~\eqref{eq:overparam} beyond the $\ell^1-\ell^2$ norm.

\subsection{Quadratic Variational Forms}\label{sec:quadvar}

It is well known that quadratic variational forms exist for many nonsmooth regularisers, including nuclear norm, $\ell_q$ and also other nonconvex regularisers \cite{geman1992constrained,black1996unification}. In general, for a function
 $R:\RR^n\to \RR$ (see \cite{poon2021smooth} for a proof), one has the equivalence between: 
\begin{itemize}
 \item[i)] ) $R(\xp) = \phi(\xp\odot \xp)$ where $\phi$ is proper, concave and upper semi-continuous, with domain $\RR_+^d$. 
\item[ii)] There exists a convex function $\psi$ for which  $R(\xp) = \inf_{z\in\RR^n_+} \frac12 \sum_{i=1}^n z_i \xp_i^2 + \psi(z)$. 
\end{itemize}
Furthermore,  
 $\psi(z) = (-\phi)^*(-z/2)$ is defined via the convex conjugate $(-\phi)^*$ of  $-\phi$.

One particularly interesting class of functions which fit into the quadratic variational framework are (group) $\ell_q$ semi-norms for $q\in (0,2)$.
\begin{lem}\label{lem:lq}
Let $\beta>0$ and $q=2\beta/(1+\beta)$. Then,
$$
\frac{1}{q}\sum_{g\in\Gg} \norm{x_g}^q= \min_{\eta\in\RR^{\abs{\Gg}}_+} \frac12 \sum_{g\in\Gg} \frac{\norm{x_g}^2}{\eta_g} +\frac{1}{2\beta} \sum_{g\in\Gg} \eta_g^\beta
= \min_{x = u\odot v} \frac12 \norm{u}^2 + \frac{1}{2\beta} \sum_i \abs{v_i}^{2\beta}.
$$
\end{lem}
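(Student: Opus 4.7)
The plan is to reduce everything to a one-dimensional scalar identity and then exploit separability across the groups.

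\textbf{Step 1 (scalar identity).} For fixed $a\geq 0$, consider the one-variable optimization problem
\[
h(a) \eqdef \min_{\eta>0}\; \tfrac12 \tfrac{a^2}{\eta} + \tfrac{1}{2\beta}\eta^\beta.
\]
The objective is smooth and strictly convex on $\RR_{++}$ (for $a>0$) and tends to $+\infty$ at both endpoints, so it has a unique minimizer. Setting the derivative to zero gives
$-\tfrac12 a^2/\eta^2 + \tfrac12 \eta^{\beta-1}=0$, i.e. $\eta_\star = a^{2/(1+\beta)}$. Plugging back and using $q = 2\beta/(1+\beta)$ (so $1-1/(1+\beta) = \beta/(1+\beta) = q/2$), one computes
\[
h(a) = \tfrac12 a^{q} + \tfrac{1}{2\beta} a^{q} = \tfrac{1+\beta}{2\beta}\, a^{q} = \tfrac{1}{q}\,a^{q}.
\]
The case $a=0$ is handled separately: with the usual convention that the infimum is attained in the limit $\eta\to 0^+$, one gets $h(0)=0$, agreeing with $\tfrac{1}{q} 0^{q}$.

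\textbf{Step 2 (first equality).} The objective $\tfrac12 \sum_g \norm{x_g}^2/\eta_g + \tfrac{1}{2\beta}\sum_g \eta_g^\beta$ is separable across groups. Applying Step~1 with $a = \norm{x_g}$ on each group yields
\[
\min_{\eta\in\RR_+^{\abs{\Gg}}} \tfrac12 \sum_{g\in\Gg} \tfrac{\norm{x_g}^2}{\eta_g} + \tfrac{1}{2\beta}\sum_{g\in\Gg}\eta_g^\beta
= \sum_{g\in\Gg} \tfrac{1}{q}\norm{x_g}^q,
\]
which is the first claimed identity.

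\textbf{Step 3 (second equality).} For the Hadamard reformulation, recall $u\in\RR^p$ and $v\in\RR^{\abs{\Gg}}$ with $(u\odot v)_g = u_g v_g$. I first marginalize over $u$ at fixed $v$. For each $g$, the constraint $u_g v_g = x_g$ forces $u_g = x_g/v_g$ whenever $v_g\neq 0$ (and is infeasible otherwise unless $x_g = 0$), giving $\tfrac12 \norm{u_g}^2 = \tfrac12 \norm{x_g}^2/v_g^2$. Therefore
\[
\min_{x=u\odot v} \tfrac12\norm{u}^2 + \tfrac{1}{2\beta}\sum_{g}\abs{v_g}^{2\beta}
= \min_{v} \;\tfrac12\sum_{g\in\Gg}\tfrac{\norm{x_g}^2}{v_g^2} + \tfrac{1}{2\beta}\sum_{g}\abs{v_g}^{2\beta}.
\]
The change of variables $\eta_g = v_g^2\in\RR_+$ is a bijection between $\RR_+$ and $\{v_g^2 : v_g\in\RR\}$ (only $\eta_g$ matters since the objective is even in $v_g$), and transforms the right-hand side into $\tfrac12\sum_g \norm{x_g}^2/\eta_g + \tfrac{1}{2\beta}\sum_g\eta_g^\beta$. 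This is exactly the middle expression of the lemma, closing the chain of equalities.

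The only subtle point is handling the degenerate cases where $v_g$ or $\eta_g$ vanishes while $x_g\neq 0$; this is dealt with by observing that in such cases the objective is $+\infty$, consistent on both sides, so the minima agree. No step beyond the scalar computation of Step~1 is delicate.
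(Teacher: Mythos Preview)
Your proof is correct and complete; the paper itself does not supply a proof of this lemma (it is stated as a known fact following the general discussion of quadratic variational forms), so there is nothing to compare against. Your approach---reduce to a scalar minimization, solve by first-order conditions, then use separability and the change of variable $\eta_g=v_g^2$---is exactly the natural one.

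One small slip: in Step~1 you assert that the one-variable objective $\eta\mapsto \tfrac12 a^2/\eta + \tfrac{1}{2\beta}\eta^\beta$ is strictly convex on $\RR_{++}$. This is false for $0<\beta<1$, since $\eta^\beta$ is then concave and eventually dominates. Your argument does not actually need convexity, though: the function tends to $+\infty$ at both endpoints and the first-order condition $\eta^{\beta+1}=a^2$ has a unique positive root, so the minimizer is unique regardless. Just drop the word ``convex'' and the argument stands.
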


In the remaining part of this section, we discuss two extensions of our VarPro approach: the first is where both the loss function and regulariser have quadratic variational forms, and the second is the use of non-convex functionals.

\subsection{Nonsmooth loss functions}\label{sec:nonsmoothloss}

Consider for $y\in\RR^m$, $L\in\RR^{p\times n}$ and $A\in \RR^{m\times n}$,
$$
\min_{\xp\in\RR^n} \Phi(\xp) = R_1(L\xp)+ \frac{1}{\lambda} R_2(A\xp-y)
$$
where the $R_i$ functionals (for $i=1,2$) both have quadratic variational forms
\begin{equation}
R_i(z) =\min_{\eta\in\RR^{n_i}_+} \frac12 \sum_{g\in \Gg_i}\frac{\norm{z_g}^2}{\eta_g} + \sum_g h_i(\eta_g), 
\end{equation}
where we assume that $h_1$ and $h_2$ are both differentiable functions, we have the partitions $\cup_{g\in \Gg_1} g = \ens{1,\ldots,p}$,  $\cup_{g\in \Gg_2} g = \ens{1,\ldots,m}$ and $d_1 = \abs{\Gg_1}$, $d_2 = \abs{\Gg_2}$.

\begin{prop}\label{prop:robust_loss}
We have
$$\min_\xp\Phi(\xp) = \min_{v\in\RR^{d_1},w\in\RR^{d_2}} f(v,w)\qwhereq f(v,w) \eqdef  h_1(v)+\frac{1}{\lambda} h_2(w) + \phi(v,w), $$
with 
$$
\phi(v,w) = \max_{\al\in\RR^p,\xi\in\RR^m}  \enscond{-\frac12 \norm{v\odot \alpha}^2 - \frac{\lambda}{2} \norm{w\odot \xi}^2 +\dotp{\xi}{y} }{L^\top \alpha+A^\top \xi = 0}.
$$
The optimal solutions satisfy$$
Lx = -v^2 \alpha \qandq Ax= y- \lambda w^2 \xi.
$$
Moreover, the maximiser $\al,\xi$ to the inner problem $\phi$ satisfy for some $x\in\RR^n$
\begin{equation}\label{eq:inner_robust}
\begin{pmatrix}
\diag(\bar v^2) & 0 &L\\
0 &\lambda \diag(\bar w^2) & A\\
L^\top &A^\top & 0
\end{pmatrix} \begin{pmatrix}
\alpha\\\xi \\\xp
\end{pmatrix} = \begin{pmatrix}
0\\y
\\0
\end{pmatrix},
\end{equation}
where as before, $\bar v$ and $\bar w$ are the extensions of $v$ and $w$ so that $\bar v\cdot \alpha = v\odot \alpha$ and $\bar w\cdot \xi - w\odot \xi$.
\end{prop}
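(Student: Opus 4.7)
The plan is to proceed exactly as in the proof of Proposition~\ref{prop:gentv}, but with two Hadamard blocks in parallel, one for the regulariser $R_1\circ L$ and one for the loss $\frac{1}{\lambda}R_2(A\cdot - y)$. First I would invoke the quadratic variational forms of $R_1$ and $R_2$ under the Hadamard substitution $\eta_g = v_g^2$ (respectively $w_g^2$), introducing auxiliary vectors $u_1\in\RR^p$ and $u_2\in\RR^m$ and the factorisations $L\xp = u_1\odot v$, $A\xp-y = u_2\odot w$, to rewrite
$$
\min_{\xp} \Phi(\xp) = \min_{\xp,u_1,u_2,v,w} \enscond{ \tfrac12\|u_1\|^2 + \tfrac{1}{2\lambda}\|u_2\|^2 + h_1(v) + \tfrac{1}{\lambda}h_2(w)}{L\xp = u_1\odot v,\; A\xp - y = u_2\odot w}.
$$
Pulling the $h_1(v)+\frac{1}{\lambda}h_2(w)$ terms outside then defines
$$
\phi(v,w) \eqdef \min_{\xp,u_1,u_2} \enscond{ \tfrac12\|u_1\|^2 + \tfrac{1}{2\lambda}\|u_2\|^2}{L\xp = u_1\odot v,\; A\xp - y = u_2\odot w},
$$
and $\min_\xp \Phi = \min_{v,w} f(v,w)$ follows by construction.

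Next I would dualise this inner problem. It is a convex quadratic programme with affine constraints, so strong duality holds. Introducing multipliers $\alpha\in\RR^p$ and $\xi\in\RR^m$ for the two constraints, swapping $\min$ and $\max$, and carrying out the three inner minimisations separately gives: the $\xp$-minimisation enforces $L^\top\alpha + A^\top\xi = 0$; using the identity $\dotp{\alpha}{u_1\odot v} = \dotp{\bar v\cdot\alpha}{u_1}$, the $u_1$-minimisation yields $u_1 = \bar v\cdot\alpha$ with optimal value $-\frac12\|v\odot\alpha\|^2$; and analogously the $u_2$-minimisation yields $u_2 = \lambda\bar w\cdot\xi$ with optimal value $-\frac{\lambda}{2}\|w\odot\xi\|^2$. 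The leftover linear term is $-\dotp{\xi}{y}$. Performing the sign change $(\alpha,\xi)\mapsto(-\alpha,-\xi)$, which leaves both the quadratic objective and the feasibility set $\{L^\top\alpha + A^\top\xi = 0\}$ invariant while flipping the sign of the linear term, delivers exactly the formula for $\phi(v,w)$ stated in the proposition. The primal recovery formulas then fall out of the KKT conditions: after the sign flip, $u_1 = -\bar v\cdot\alpha$ gives $L\xp = u_1\odot v = -\bar v^2\cdot\alpha$, i.e.\ $Lx = -v^2\alpha$, while $u_2 = -\lambda\bar w\cdot\xi$ gives $A\xp - y = -\lambda\bar w^2\cdot\xi$, i.e.\ $Ax = y - \lambda w^2\xi$. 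Rearranging these together with $L^\top\alpha + A^\top\xi = 0$ as a single block equation in $(\alpha,\xi,\xp)$ produces \eqref{eq:inner_robust}.

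The argument is essentially bookkeeping and I do not anticipate a serious obstacle. The two points that warrant care are strong duality for the inner problem (immediate since the objective is a convex quadratic and the constraints affine, so any primal feasible point is a Slater point) and the coordinates on which $v_g = 0$ or $w_g = 0$; on those coordinates the corresponding $u_1$- or $u_2$-components drop out of the constraint and so are taken to be zero, effectively restricting the parameterisation to the support of $(v,w)$, in the same spirit as the argument in Proposition~\ref{prop:diff_f0}. The only notational subtlety is that, via the Hadamard substitution, the scalar variational penalty $\sum_g h_i(\eta_g)$ is identified with the overloaded vector function $h_i(v) = \sum_g h_i(v_g^2)$ used in the statement, which is consistent with the usage in Lemma~\ref{lem:lq}.
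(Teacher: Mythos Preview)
Your proposal is correct and follows essentially the same approach as the paper's proof: introduce the two Hadamard factorisations, dualise the resulting convex problem in $(\xp,u_1,u_2)$ with multipliers $(\alpha,\xi)$, and read off the KKT relations. The only cosmetic difference is that the paper writes the Lagrangian with the constraint ordering $\dotp{\alpha}{u\odot v - L\xp} + \dotp{\xi}{z\odot w - A\xp + y}$, which yields $+\dotp{\xi}{y}$ and $u=-v\odot\alpha$ directly, whereas your sign convention produces $-\dotp{\xi}{y}$ and you then apply the substitution $(\alpha,\xi)\mapsto(-\alpha,-\xi)$; the end result is identical.
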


\begin{proof}
We can write $\min_{\xp\in\RR^n} \Phi(\xp)$ as
\begin{align*}
\min_{\xp,u,v,z,w} \enscond{\frac12 \norm{u}^2 + h_1(v)+\frac{1}{2\lambda} \norm{z}^2 + \frac{1}{\lambda} h_2(w) }{u\odot v = L\xp, z\odot w = A\xp-y}
\end{align*}
where the minimisation is over the variables $\xp\in\RR^m$, $v\in\RR^{d_1}$, $u\in\RR^p$, $w\in\RR^{d_2}$ and $z\in\RR^m$.
This is a convex problem over the variables $u,z,\xp$, so
by convex duality,
\begin{align*}
&\min_{v,w}\min_{\xp,u,v,z,w} \frac12 \norm{u}^2 + h_1(v)+\frac{\norm{z}^2}{2\lambda}  +\frac{h_2(w)}{\lambda}  + \dotp{\alpha}{u\odot v - L\xp}  + \dotp{\xi}{z\odot w - A\xp+y}\\
&=
\min_{v,w}\max_{\al,\xi}  \enscond{-\frac{\norm{v\odot \alpha}^2}{2} + h_1(v) -\frac{\lambda}{2} \norm{w\odot \xi}^2 + \frac{h_2(w)}{\lambda} +\dotp{\xi}{y} }{L^\top \alpha+A^\top \xi = 0}
\end{align*}
where the dual variable are $\al\in\RR^p,\xi\in\RR^m$ and
where the optimal solutions satisfy
$u+v\odot \alpha = 0$, $z+ \lambda w\odot \xi=0$.
\end{proof}

\begin{rem}
Note that if $A=\Id$, then we can write \eqref{eq:inner_robust} as
$$
x = y - \lambda w^2\odot \xi \qandq \xi = -L^\top \alpha
$$
$$
\qandq \pa{\diag(\bar v^2)   + \lambda L\diag(\bar w^2)  L^\top }\alpha  = - Ly 
$$
\end{rem}

We now exemplify this general formulation on three illustrative scenario: TV-L1, square root Lasso and matrix recovery problems. 
 

\paragraph{Differentiability of $f$}
Formally, the gradient of $f$ is given as
\begin{equation}\label{eq:grad_nonsmooth_loss}
\partial_v f(v,w) = \nabla h_1(v) - v\odot \alpha^2 \qandq \partial_w f(v,w) =\frac{1}{\lambda} \nabla h_2(w) - \lambda w\odot \xi^2
\end{equation}
where $(\alpha,\xi) \in \argmax \phi(v,w)$ solve the inner problem given in  Proposition \ref{prop:robust_loss}. Note that these formulas are well-defined  for $v$ and $w$ such that the maximisers to the inner problem exist: in this case, $v\odot \alpha$ and $w\odot \xi$ are unique thanks to the quadratic terms $\norm{v\odot \alpha}^2$ and $\norm{w\odot \xi}^2$ in  $\phi$. It is straightforward to conclude using Theorem \ref{thm:rockafellar} that $f$ is differentiable whenever $v$ and $w$ have all nonzero entries. To see differentiability in general, one should assume that there exists $x_0$, $u_0$, $z_0$  such that
\begin{equation}\label{eq:should_assume}
Ax_0 -y = z_0\odot w \qandq u\odot v = Lx_0.
\end{equation}
Indeed, to establish the existence of maximisers, following \eqref{sec:diff_basis_pursuit}, one first observes that $(0,0)$ is a feasible point to the inner maximisation problem in $\phi(v,w)$, so
$
\phi(v,w) \geq 0
$
and one can restrict the maximisation problem to $\alpha,\xi$ such that $L^\top \alpha + A^\top \xi = 0$ and 
$$
\dotp{\xi}{y} \geq \frac12\norm{v\odot \alpha}^2 +\frac{\la}{2}\norm{w\odot \xi}^2.
$$
From this, 
in general, it is not clear that one can extract uniformly bounded (and hence convergent up to a subsequence) maximising sequences $\alpha_n$ and $\xi_n$. However, if  \eqref{eq:should_assume} holds, then
$$
\norm{v\odot \alpha}\norm{u_0} + \norm{w\odot \xi}\norm{z_0}  \geq  \dotp{\xi}{ Ax_0 -z_0 \odot w } \geq \frac12\norm{v\odot \alpha}^2 +\frac{\la}{2}\norm{w\odot \xi}^2
$$
from which it is clear  that $\norm{v\odot \alpha}$ and $\norm{w\odot \xi}$ are uniformly bounded. One can then proceed as in Section \ref{sec:diff_basis_pursuit} to extract maximising sequences to deduce the existence of maximisers and hence differentiability of $f$.
\subsubsection{TV-L1} 

We consider the case where $R_1$ and $R_2$ are both $\ell_1$-norms, so that $h_1 = h_2 = \frac12 \norm{\cdot}^2$.
The use of the $\ell^1$ norm as a loss function is popular to cope with outliers and impulse noise, and a typical example is when $L$ is a finite difference approximation of the gradient operator (as defined in Section~\ref{sec:TV}), corresponding to the so-called TV-L1 method~\cite{nikolova2004variational}.
Given solutions $(\al,\xi,\xp)$ to the linear system  \eqref{eq:inner_robust} (note that $\al$ and $\xi$ are unique defined on the support  of $v$ and $w$ respectively), the  gradient of $f$  is 
$$
\partial_v f = v - v\odot\alpha^2 \qandq \partial_w f = w-w \odot\xi^2.
$$
In figure \ref{fig:tvl1}, we show the results of denoising where $A=\Id$,  $L$  and  $R_1$  correspond to group-TV  as described in Section \ref{sec:TV} and  $R_2$ is a  group-$\ell_1$ norm where each group is of size 3 corresponding to the 3 colour channels. For each image, we corrupt 25\% of the pixels with  salt and pepper noise and show the objective convergence error for different regularisation strengths $\lambda$. We compare against Primal-Dual (whose implementation is as described in the appendix).

\begin{figure}
\begin{tabular}{c@{\hspace{3pt}}c@{\hspace{3pt}}c@{\hspace{3pt}}c}
$\lambda = 0.6$&$\lambda = 1.0$&$\lambda = 2.0$\\
\includegraphics[width=0.32\linewidth]{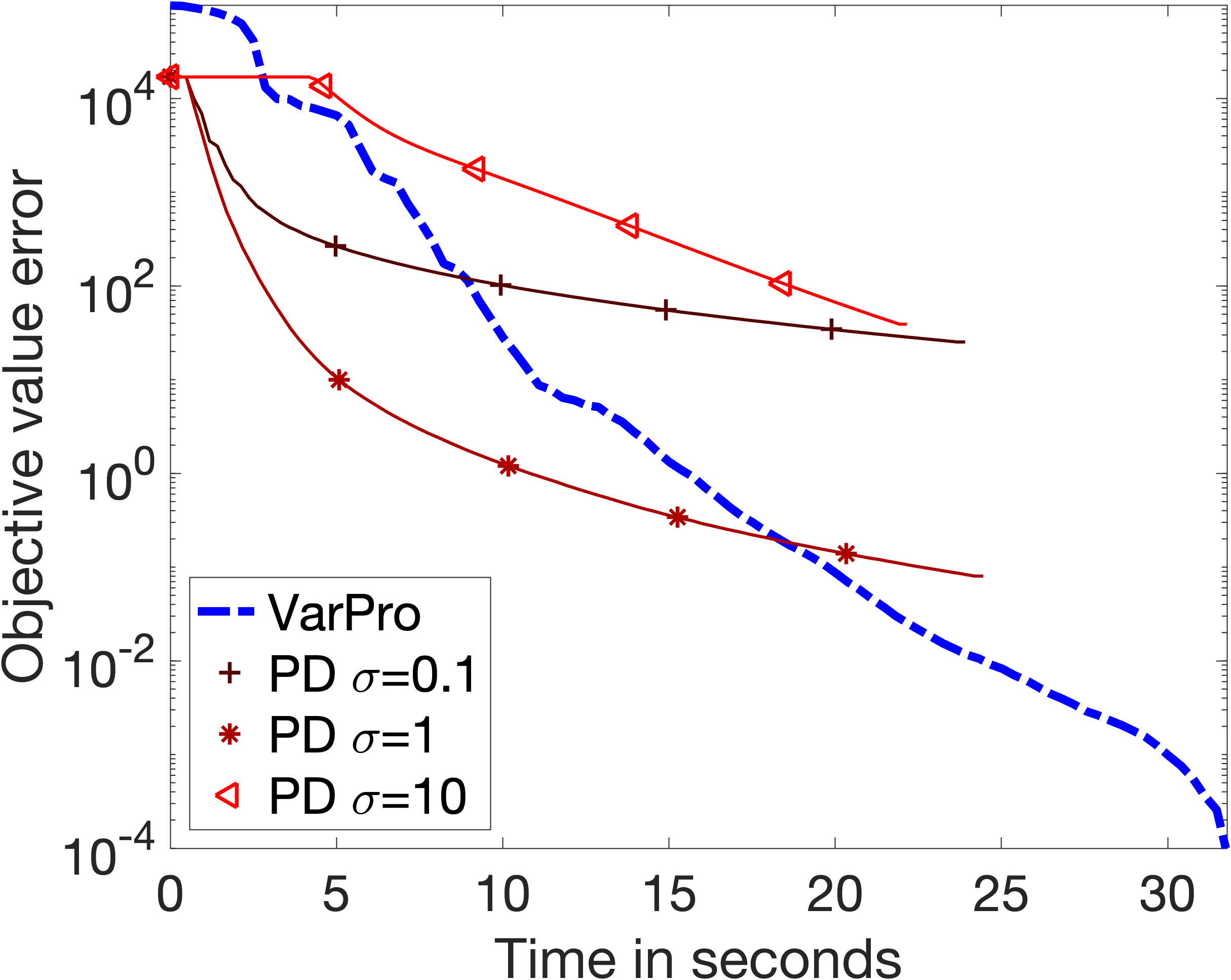}
&\includegraphics[width=0.32\linewidth]{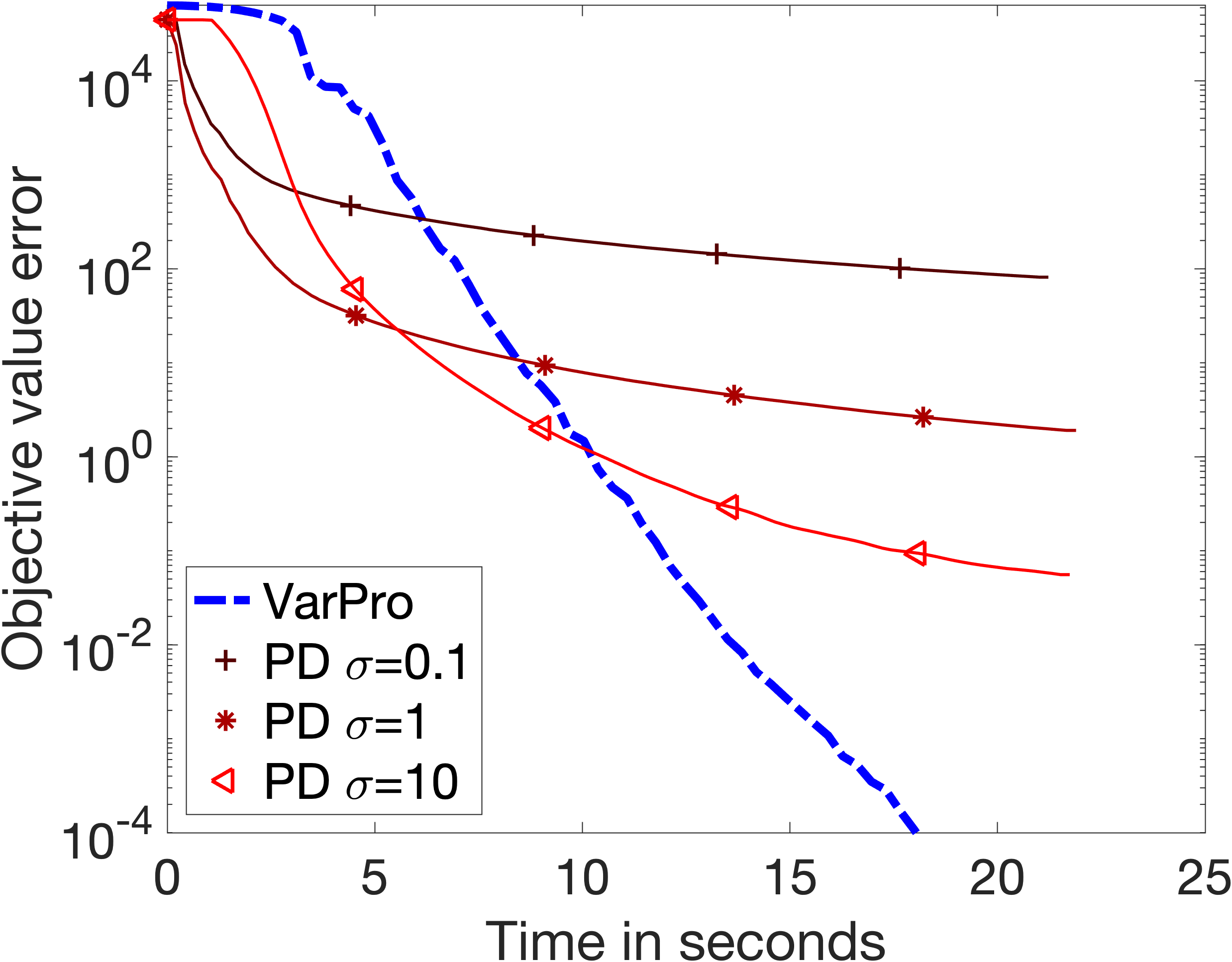}
&\includegraphics[width=0.32\linewidth]{{figures/TVl1/hestain/tvl1proxcalc_227_303_2.00}}
\\
\includegraphics[width=0.32\linewidth]{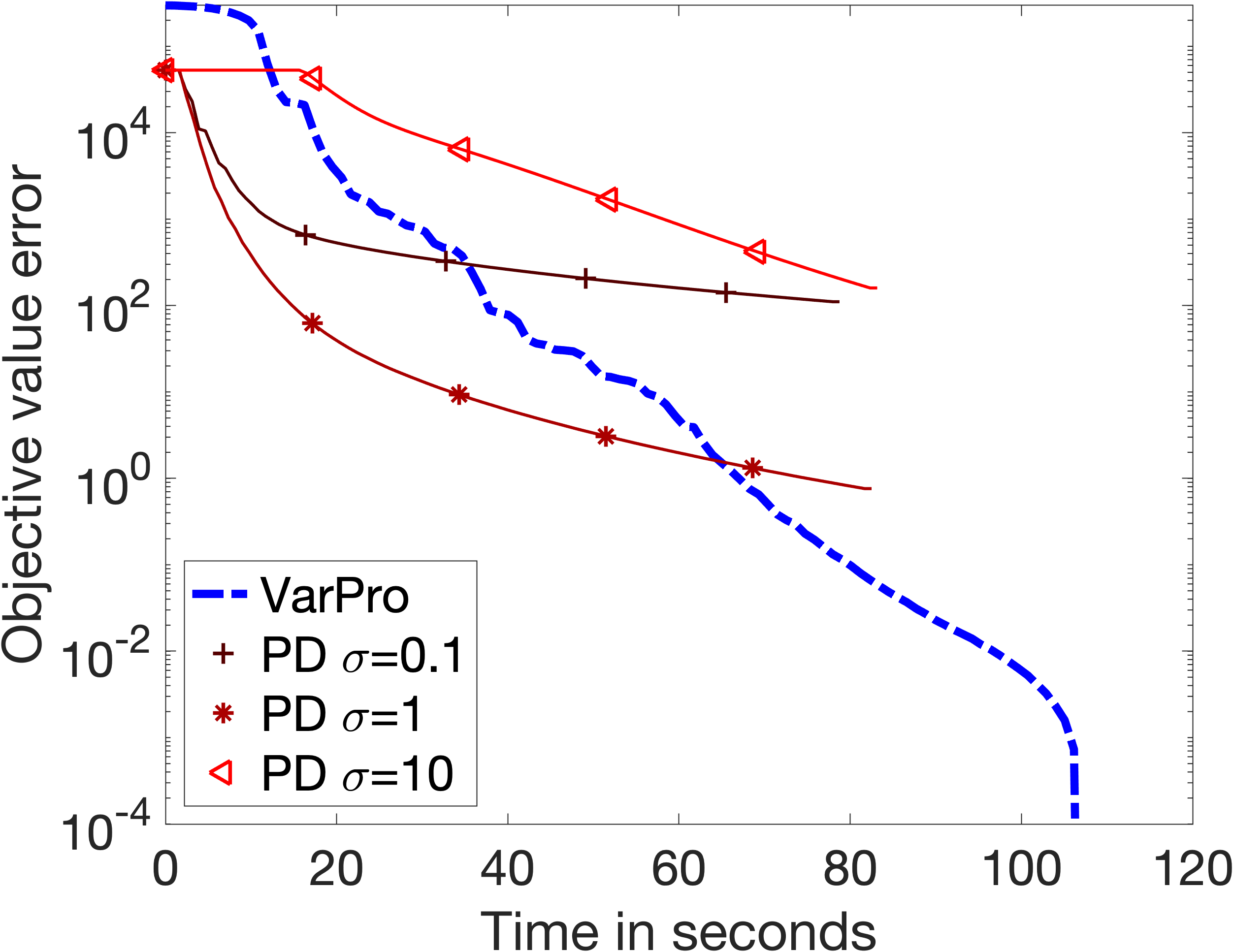}
&\includegraphics[width=0.32\linewidth]{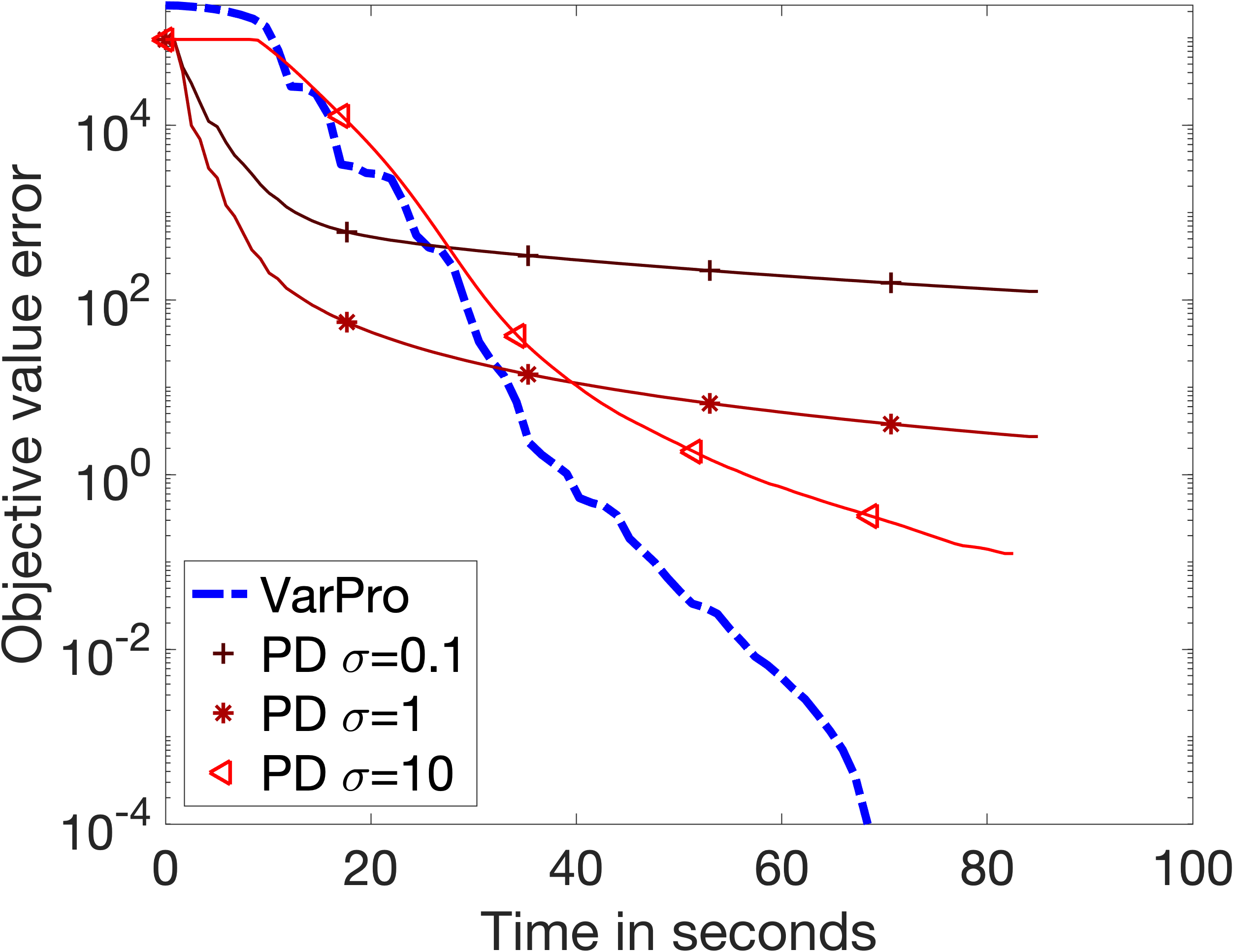}
&\includegraphics[width=0.32\linewidth]{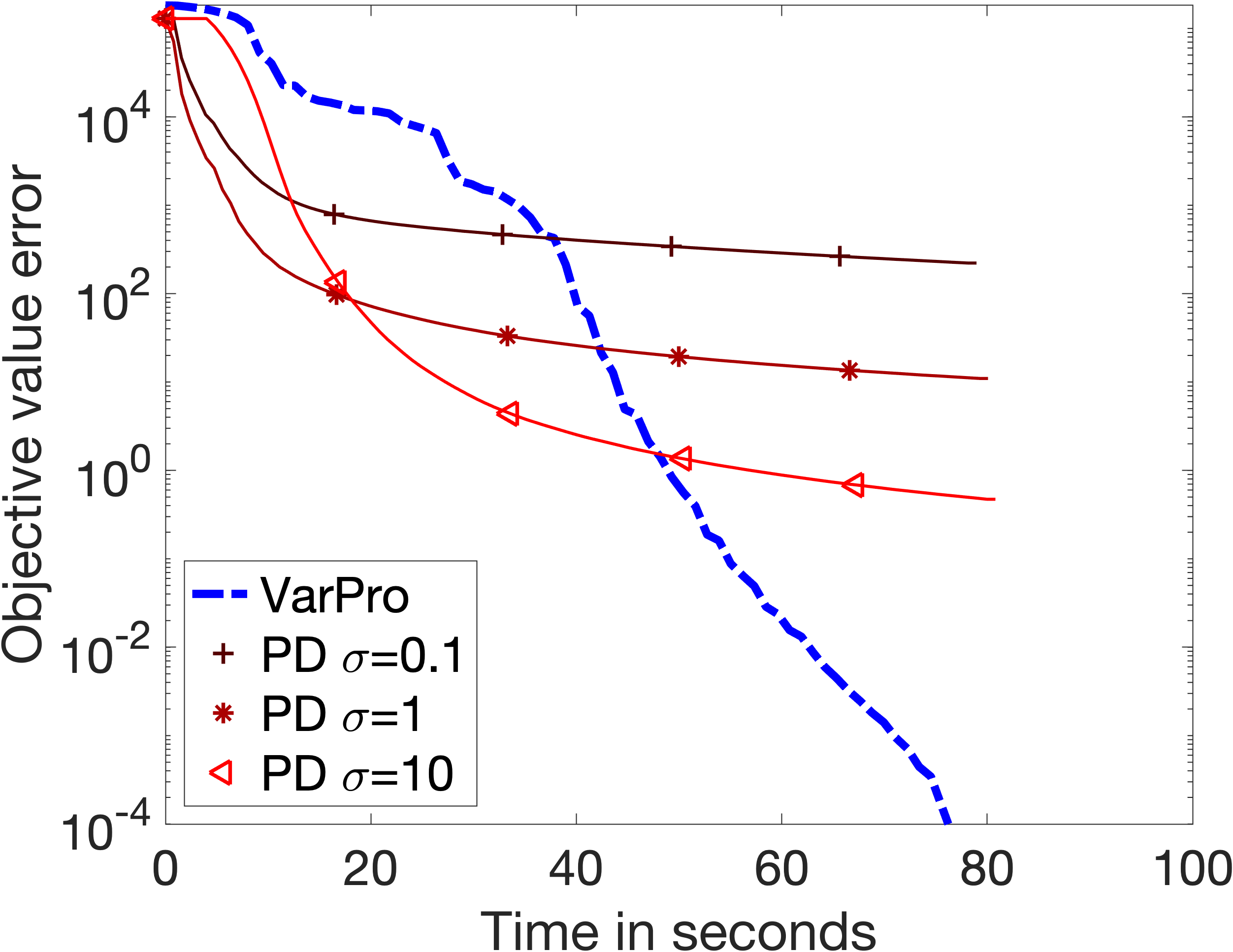}
\end{tabular}
\caption{Comparison of Primal-Dual and VarPro for TV-$L_1$ denoising with salt and pepper noise. The top row corresponds to the image ``hestain'' of size $227\times 303$ and the bottom row corresponds to the image ``peppers" of size $384\times 512$. The figures show the objective error against computational time. \label{fig:tvl1} }
\end{figure}

\begin{figure}
\begin{tabular}{c@{\hspace{1pt}}c@{\hspace{1pt}}c@{\hspace{1pt}}c}
$\lambda = 0.6$&$\lambda = 1.0$&$\lambda = 2.0$&Input\\
\includegraphics[width=0.23\linewidth]{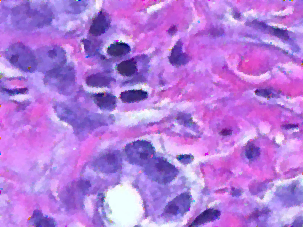}
&\includegraphics[width=0.23\linewidth]{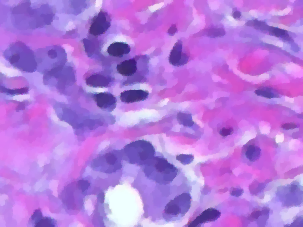}
&\includegraphics[width=0.23\linewidth]{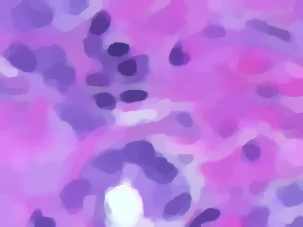}&\includegraphics[width=0.23\linewidth]{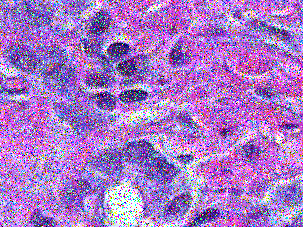}
\\
\includegraphics[width=0.23\linewidth]{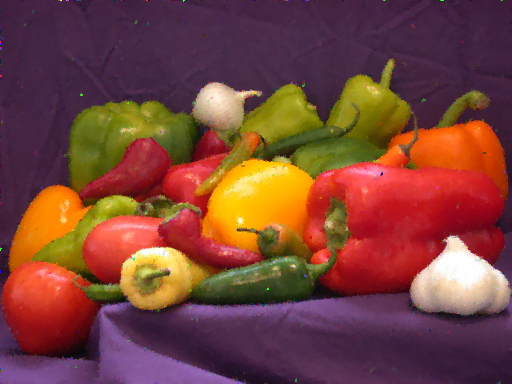}
&\includegraphics[width=0.23\linewidth]{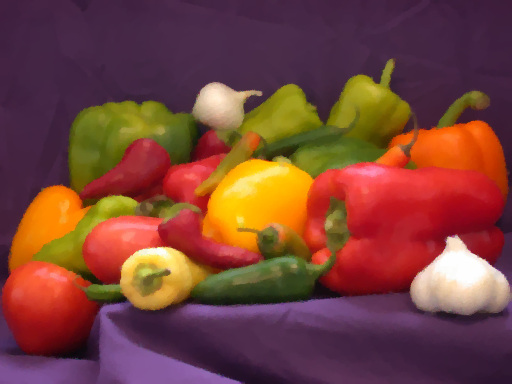}
&\includegraphics[width=0.23\linewidth]{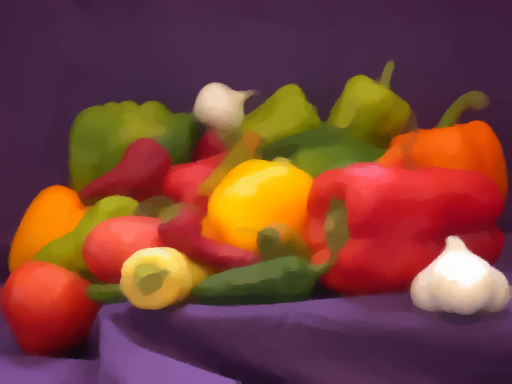}
&\includegraphics[width=0.23\linewidth]{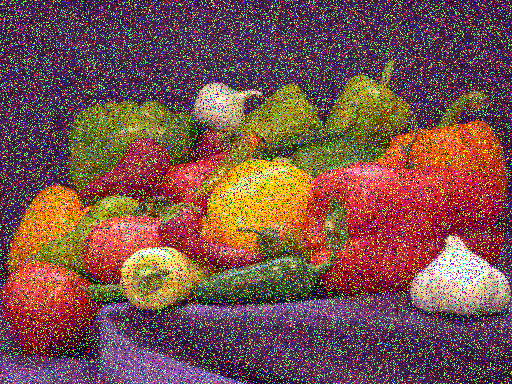}
\end{tabular}
\caption{Comparison of Primal-Dual and VarPro for TV-$L_1$ denoising with salt and pepper noise. The far right images are the noisy input images. The other images, hestain in the first row and peppers in the second row, are the outputs of VarPro.\label{fig:tvl1_images} }
\end{figure}

\subsubsection{The square root Lasso}

We now consider the case of $L=\Id$, $A\in \RR^{m\times n}$ and $y\in\RR^m$, and $R_i$ are the group-$\ell_1$ norms. This corresponds to the square root group lasso
\begin{align}\label{eq:sqrtlasso}
\min_\xp  \norm{\xp}_1 +\frac{1}{\lambda \sqrt{m}} \norm{A\xp- y}_{2}.
\end{align}
This optimization problem is equivalent to the original group Lasso problem but this equivalence requires to tune the multipliers $\la$ involved in both problem, which depends on the input data $y$ and the design matrix $A$~\cite{belloni2011square}. An advantage of the square root Lasso formulation is that, thanks to the 1-homogeneity of the functionals, the parameter $\la$ requires less tuning and is approximately invariant under modification of the noise level and number of observations. Another setting is used in multitask learning is  where the Loss function is the nuclear norm \cite{geer2016chi} (see Appendix \ref{app:matrices} for remarks on the use of quadratic variational forms for this setting).
The equivalent VarPro formulation is an optimisation problem over $n+1$ variables
 \begin{equation*}
	\min_{w\in\RR,v\in\RR^n}  f(v,w) , 
\end{equation*}
\begin{equation*}
\text{where }
f(v,w) =\frac12 \norm{v}^2 +\frac{1}{2\lambda \sqrt{m}} w^2+\max_{\xi} -\frac{1}{2} \norm{v\odot( A^\top \xi)}^2 -\frac{\lambda\sqrt{m}}{2} w^2 \norm{\xi}^2  + \dotp{  y }{\xi}
\end{equation*}
with $x=v^2 \odot A^\top \xi$.

We consider two examples: (i) $A$ is a random Gaussian matrix with $m=300$ and $n=2000$ and $y=Ax_0+w$ where $x_0$ is $s=40$-sparse and the entries of $w$ are iid Gaussian with variance 0.01. (i) $A$ is the MNIST dataset \footnote{\url{https://www.csie.ntu.edu.tw/~cjlin/libsvmtools/datasets/}}, with $m=60,000$ and $n=683$. Note that the first order optimality condition of~\eqref{eq:sqrtlasso} is
$$
0\in  \partial \norm{x}_1 + \frac{1}{\lambda \sqrt{m}}A^\top \partial \norm{ A\xp-y }= 0
$$
and $0$ is a solution if $\lambda \norm{y}\sqrt{m} \geq \norm{A^\top y}_\infty $. We therefore define $\lambda_{\max} \eqdef \frac{1}{\norm{y}\sqrt{m}}\norm{A^\top y}_\infty$ and consider $\lambda = \frac{1}{p}\lambda_{\max}$ for different $p>1$.

Two popular approaches to solve this optimisation problem in the literature are alternating minimisation~\cite{giraud2021introduction} and coordinate descent \cite{belloni2011square,ndiaye2017efficient}. The alternating approach iterates between the following steps \cite{giraud2021introduction}
$$
	\eta_k \eqdef \norm{A\xp_k - y}
	\qandq
	x_{k+1} \in \argmin_{\xp} \lambda\norm{\xp}_1 + \frac{1}{2\eta_k} \norm{A\xp - y}^2.
$$
This is  referred to as the scaled Lasso algorithm and requires solving the Lasso at each step. In Figure \ref{fig:sqrtlasso}, we compare against these two approaches -- the scaled Lasso algorithm uses Varpro for the Lasso as the inner solver and the coordinate descent code can be  downloaded online \cite{belloni2011square} \footnote{\url{https://faculty.fuqua.duke.edu/~abn5/belloni-software.html}}. One can observe that for coordinate descent, although it is very effective for large regularisation parameters, its performance deteriorates for small $\lambda$. In contrast, our proposed method is robust to different regularisation strengths.

\begin{figure}
\begin{tabular}{ccc}
$\lambda =\lambda_{\max}/ 2$&$\lambda =\lambda_{\max}/ 4$& $\lambda =\lambda_{\max}/ 20$\\
\includegraphics[width=0.3\linewidth]{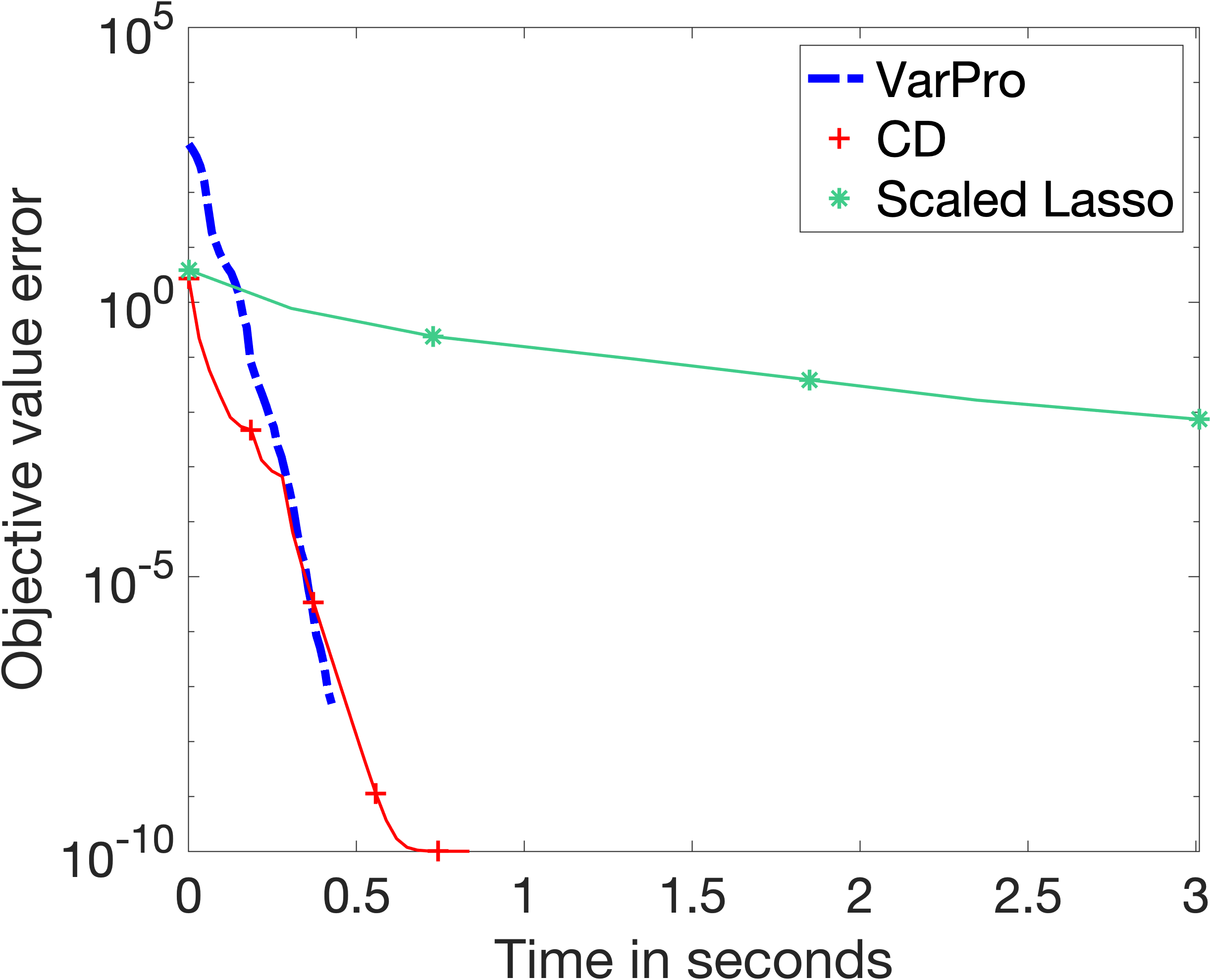} &
\includegraphics[width=0.3\linewidth]{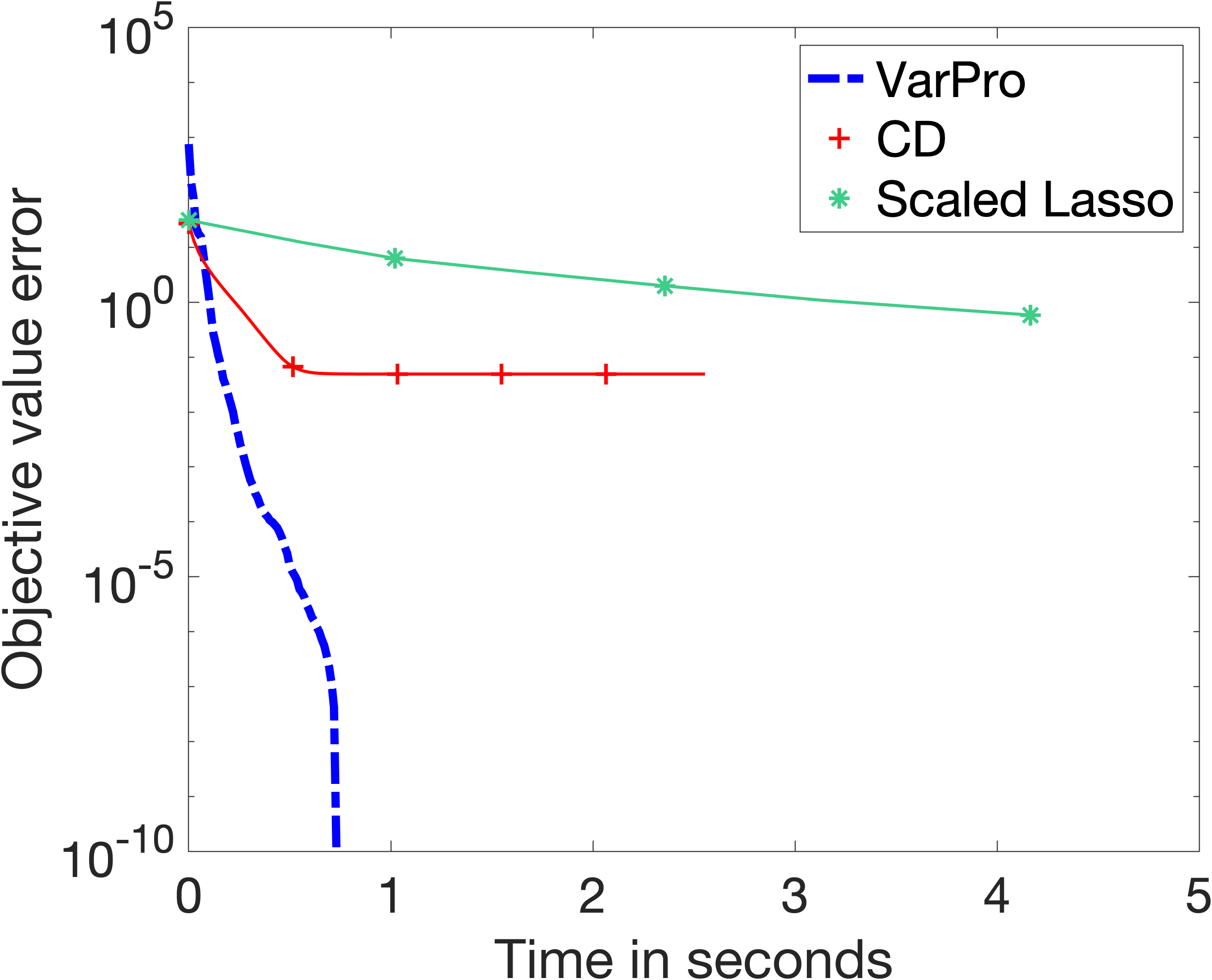} &
\includegraphics[width=0.3\linewidth]{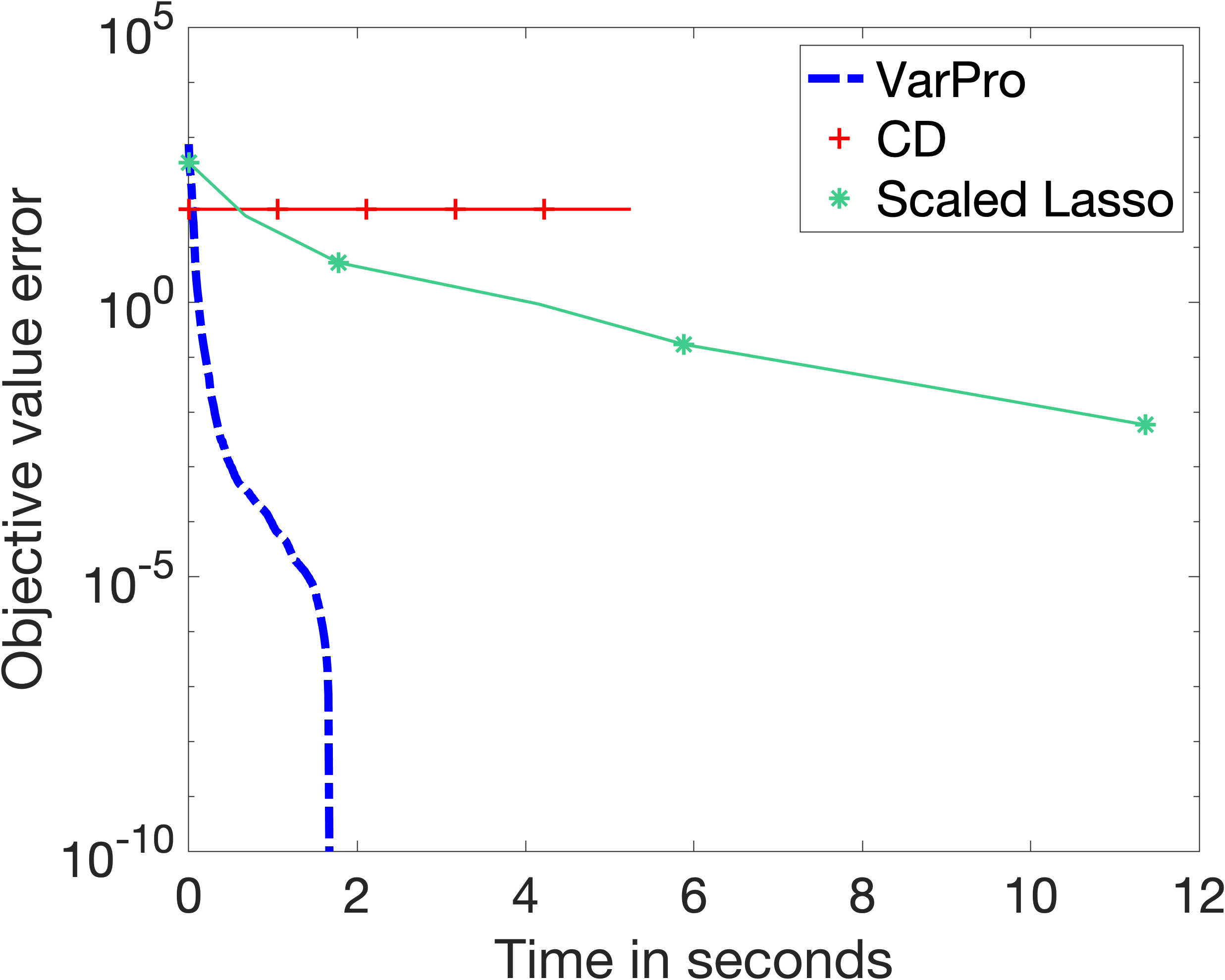} \\
\includegraphics[width=0.3\linewidth]{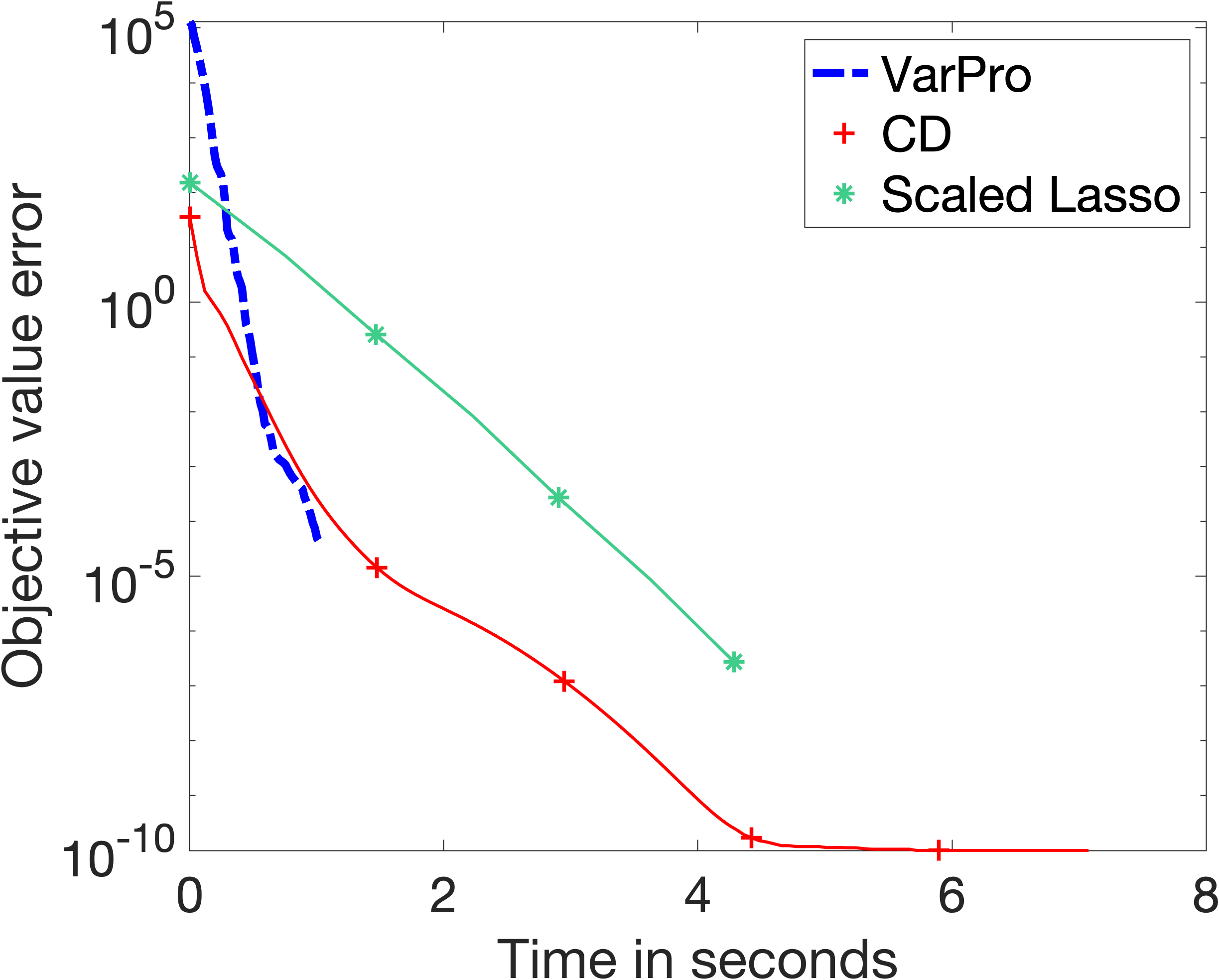} &
\includegraphics[width=0.3\linewidth]{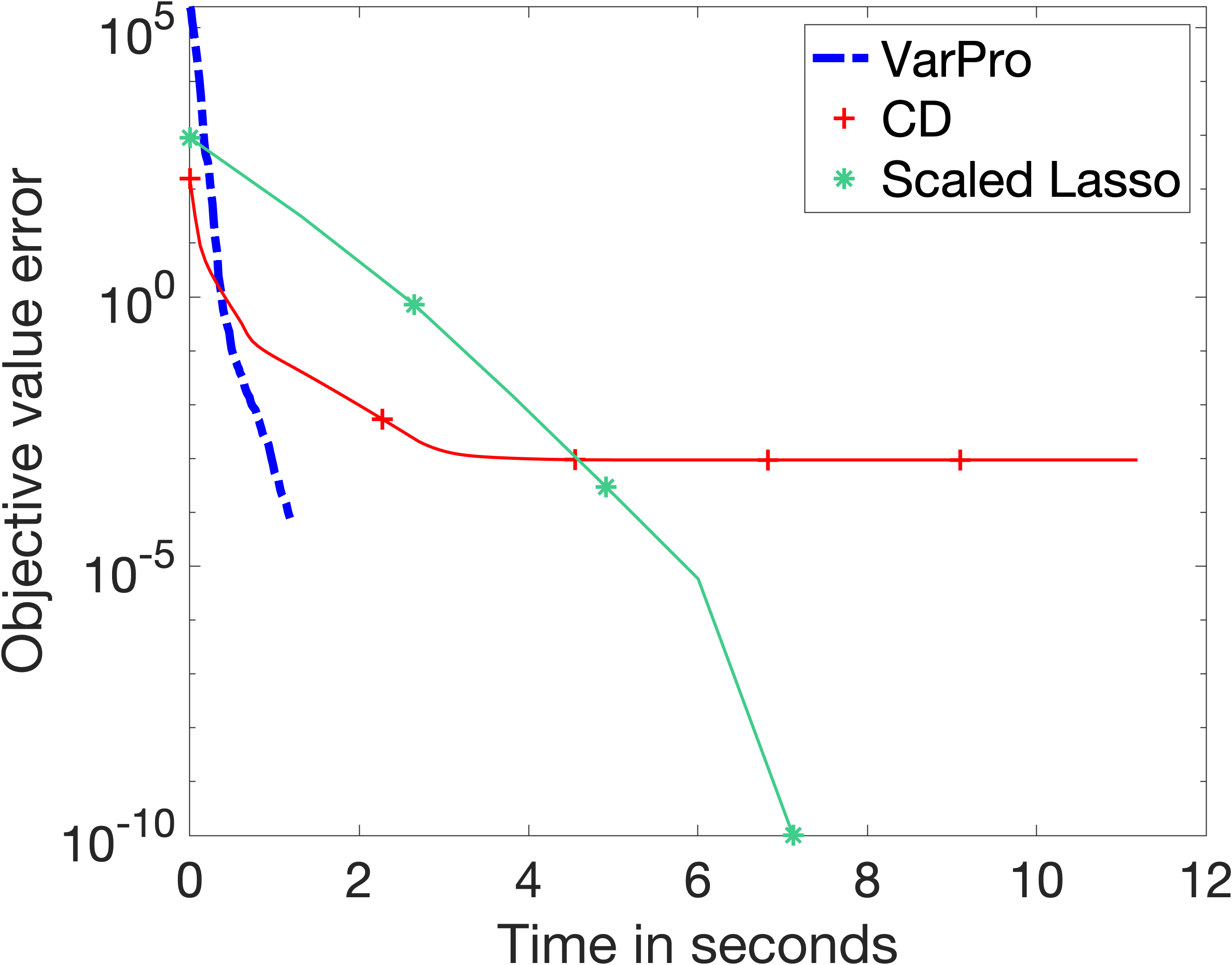} &
\includegraphics[width=0.3\linewidth]{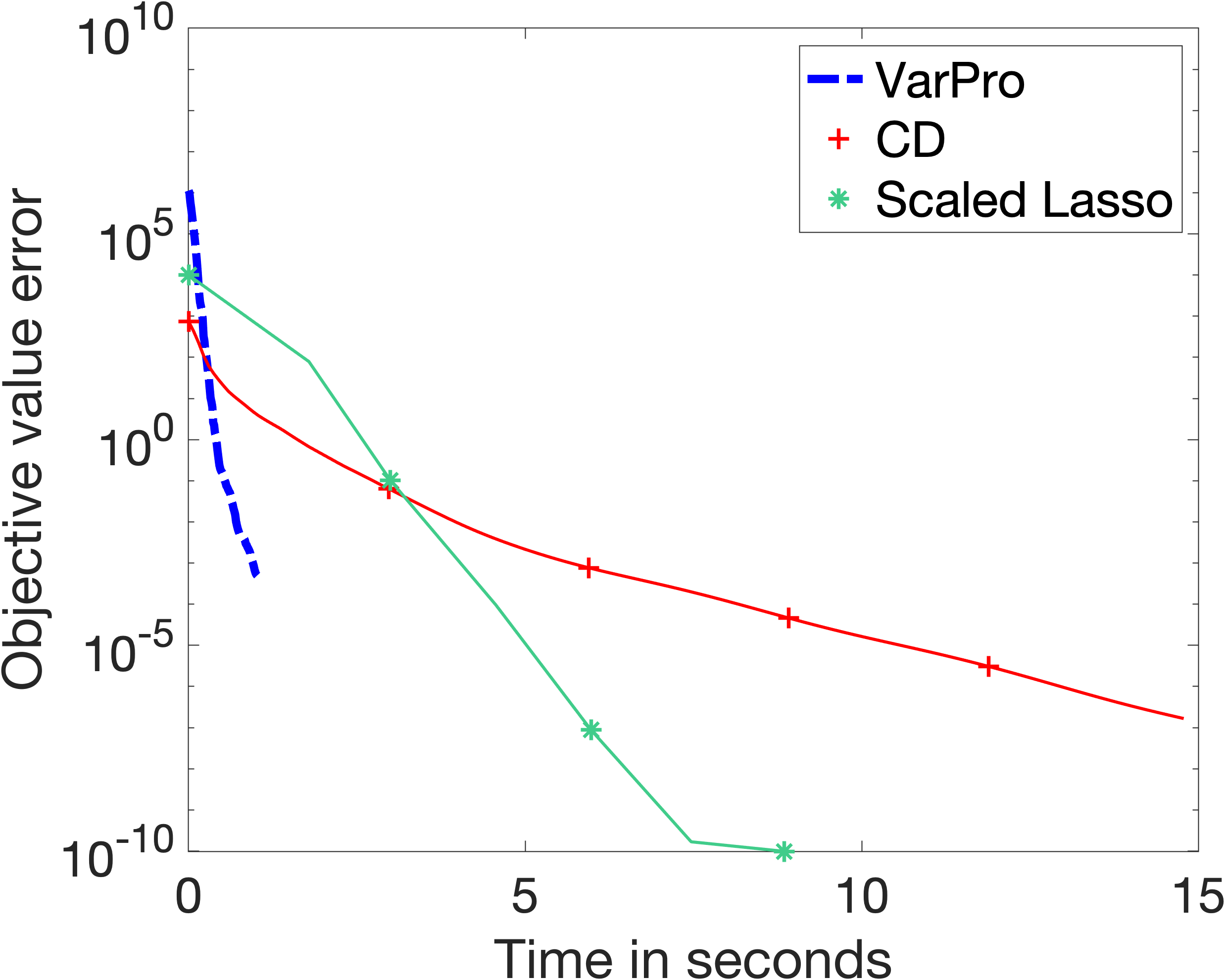} 
\end{tabular}
\caption{Comparison of Coordinate Descent (CD), Scaled Lasso and VarPro for the square root Lasso. Plot of objective error against computational time. Top row: Random Gaussian matrix with $m=300$, $n=2000$ and $y = A x_0 + w$ where $x_0$ is $s=40$ sparse and the entries of $w$ are iid Gaussian with variance $0.01$. Bottom row: MNIST dataset where $m=60000$ and $n=683$. \label{fig:sqrtlasso}}
\end{figure}

\subsection{Nonconvex regularizations}
\label{sec:nonconvex}

Let us consider the case where  the regulariser is a group $\ell_q$ semi-norm, denote $\norm{x}_{q,2} = \pa{ \sum_{g\in\Gg} \norm{x_g}^q}^{1/q}$ for $q\in (0,1)$. From \eqref{lem:lq}, we observe that if $q>2/3$, then the Hadamard parameterization gives 
$$
	\frac{1}{q}\sum_{g\in\Gg} \norm{x_g}^q= \min_{x = u \odot v} \frac12 \norm{u}^2 + \frac{1}{2\al} \sum_{g\in\Gg} \abs{v_g}^{2\beta}, 
$$
with $2\beta>0$, so that the VarPro method corresponds to a differentiable optimisation problem, as exposed in \cite{poon2021smooth}. 
To go below $p=2/3$ and induce a stronger sparsity regularization while maintaining differentiability, we propose to introduce more over-parameterization. We expose the setting of a three-factors parametrization, but this is easily generalizable to more factors to further reduce the value of $q$. By doing so, this also raises the question of exploring multiple-levels optimizations (beyond simply bilevel programming).


\begin{prop}
Let $q,\beta>0$ be such that  $\beta= \frac{q}{ (2 -2q) }$.
Then,
\begin{equation}
\frac{1}{q}\sum_{g\in\Gg} \abs{x_g}^q = \min\enscond{ \frac12 \norm{u}_2^2 +\frac12 \norm{v}_2^2 + \frac{1}{2 \beta}\sum_{g} \abs{w_g}^{2\beta}}{x =  u\odot (v \cdot w)},
\end{equation}
where the minimisation is over $u\in\RR^p$ and $v,w\in\RR^{\abs{\Gg}}$.
\end{prop}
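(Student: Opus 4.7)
The plan is to reduce this three-factor identity to the familiar two-factor Hadamard identity~\eqref{eq:overparam} via sequential partial minimization. First, fix $w \in \RR^{\abs{\Gg}}$ and minimize jointly over $(u,v)$. On a group $g$ where $x_g \neq 0$, feasibility forces $w_g \neq 0$, so the constraint $x_i = u_i v_g w_g$ for $i \in g$ rewrites as $u \odot v = y$ with $y_g \eqdef x_g / w_g$. By~\eqref{eq:overparam},
\begin{equation*}
    \min_{u \odot v = y}\; \tfrac{1}{2}\norm{u}^2 + \tfrac{1}{2}\norm{v}^2 \;=\; \norm{y}_{1,2} \;=\; \sum_{g\in\Gg} \frac{\norm{x_g}_2}{\abs{w_g}}.
\end{equation*}

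It then remains to minimize the separable outer problem
$\sum_g \bigl( \norm{x_g}_2/\abs{w_g} + \abs{w_g}^{2\beta}/(2\beta)\bigr)$
over $w$. For each group independently, this is a one-dimensional problem $\min_{s>0} a/s + s^{2\beta}/(2\beta)$ with $a \eqdef \norm{x_g}_2$. The first-order condition yields $s_\star = a^{1/(2\beta+1)}$, and substituting back gives the optimal value $a^{2\beta/(2\beta+1)} \cdot (2\beta+1)/(2\beta)$. The identity $\beta = q/(2-2q)$ translates to $2\beta+1 = 1/(1-q)$ and $2\beta/(2\beta+1) = q$, so the per-group minimum is exactly $\norm{x_g}^q / q$. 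Summing over $g \in \Gg$ yields the claimed formula.

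The main (minor) obstacle is only bookkeeping around degenerate groups where $x_g = 0$: on such a group one picks $u_i = 0$ for $i \in g$ together with $v_g = w_g = 0$, so that the constraint $x_i = u_i v_g w_g = 0$ is satisfied and the objective contribution vanishes, consistent with $0^q = 0$. The remaining check is that the scalar function $s \mapsto a/s + s^{2\beta}/(2\beta)$ is continuous on $(0,\infty)$ and blows up at both endpoints (when $a > 0$), so the critical point $s_\star$ is a global minimizer; this is routine.
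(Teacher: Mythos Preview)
Your proof is correct. Both you and the paper proceed by sequential partial minimization, but you pair up the three variables differently: you first eliminate $(u,v)$ for fixed $w$ via the basic Hadamard identity~\eqref{eq:overparam}, then minimize the resulting one-dimensional problem in $|w_g|$ by direct calculus. The paper instead first eliminates $(v,w)$ into an auxiliary variable $z=v\cdot w$ using Lemma~\ref{lem:lq} (with exponent $2\beta$), and then eliminates $(u,z)$ by a second application of Lemma~\ref{lem:lq}. Your route is slightly more elementary, needing only the $\ell_1$ case of the quadratic variational form plus a scalar computation; the paper's route is more modular, since citing Lemma~\ref{lem:lq} twice makes the exponent bookkeeping automatic and shows more transparently how the argument would iterate to four or more factors.
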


\begin{proof}
By applying Lemma \ref{lem:lq} twice,
\begin{align*}
& \min_{x =  u\odot( v\cdot w)} \frac12 \norm{u}_2^2 +\frac12 \norm{v}_2^2 + \frac{1}{2\beta}\sum_{g\in\Gg } \abs{w_g}^{2\beta}\\
 &=  \min_{x = u\odot z} \frac12 \norm{u}_2^2 + \min_{z = v\cdot w}\pa{\frac12 \norm{v}_2^2 + \frac{1}{2\beta}\sum_{g\in\Gg} \abs{w_g}^{2\beta}}\\
 &=  \min_{x =  u\cdot z} \frac12 \norm{u}_2^2 + \frac{1}{2r}\sum_{g\in\Gg} \abs{z_g}^{2r} \qwhereq  r = \beta/(1+\beta)\\
 &=\frac{1}{q} \sum_{g\in\Gg} \norm{x_g}^q \qwhereq  q = \frac{2r}{(1+r)} =  \frac{2\beta}{(1+2\beta)}  ,
\end{align*}
and equivalently,  $\beta= \frac{q}{ (2 -2q) }$.
\end{proof}

The VarPro problem is differentiable if and only if $2\beta>1$, which equivalently $q>1/2$.
We now consider the case $q=2/3$, so that given a convex loss function $F$, we re-write 
$$
\min_\xp \frac{3}{2}\norm{\xp}_{2/3}+ F_0(A\xp)
$$
equivalently as 
\begin{equation}\label{eq:3split}
\min_{u,v,w } \frac{1}{2} \pa{\norm{u}_2^2+\norm{v}_2^2+\norm{w}_2^2}+ F_0(A( u\odot (v\cdot w))).
\end{equation}
One can deal with this optimization problem in three ways: as an optimisation over 3 variables ; formulate a VarPro bilevel problem with two variables $(v,w)$ on the outer problem; formulate as a bilevel problem with one variable $v$ on the outer problem. We first discuss differentiability issues of each of these three cases.

\paragraph{Option 1: Optimisation over 3 variables}

One can directly optimise \eqref{eq:3split}  over the  three variables $u,v,w$ and this  is differentiable when  $F_0$ is differentiable.

\paragraph{Option 2: Two variables on outer problem} 
One rewrite \eqref{eq:3split} as $\min_{v,w} f(v,w)$ where
\begin{equation}\label{eq:varpro_1inside}
\begin{split}
f(v,w) &= \min_{u} \frac{1}{2} \pa{\norm{u}^2+\norm{v}^2+\norm{w}^2}+ F_0(A( u\odot( v\cdot w)))\\
&=\frac{\norm{v}^2+\norm{w}^2}{2}+ \max_\alpha -\frac{1}{2} {\norm{(v\cdot w)\odot A^\top \alpha}^2}  - F_0^*(\alpha),
\end{split}
\end{equation}
where we observe that the minimisation problem over $u$ is convex and the second line is the result of convex duality.
Assuming that $F_0$ is convex,  the inner problem  over $u$ is strongly convex and $f$ is differentiable whenever $F_0$ is differentiable. Note however that even when $F_0$ is not differentiable, the solution to the inner problem is unique and one can  write
$$
\partial_v f = v - v \cdot \pa{ w_g^2 \norm{A_g^\top \alpha}^2}_{g\in\Gg} \qandq \partial_w f = w  - w\cdot \pa{ v_g \norm{A_g^\top \alpha}^2}_{g\in\Gg} ,
$$
where $\alpha$ is a dual solution to the inner problem. These formulas are well defined since $(v\cdot w) \odot A^\top \alpha$ is unique.

\paragraph{Option 3: One variable on the outer problem}
One can consider $\min_v f(v)$ where
\begin{equation}\label{eq:2inside}
\begin{split}
f(v) &= \min_{u,w} \frac{1}{2} \pa{\norm{u}^2+\norm{v}^2+\norm{w}^2}+ F_0(A( u\odot (v\cdot w)))\\
&=\min_{z} \frac{1}{2} \norm{v}^2+\norm{z}_{1,2}+ F_0(A(v\odot z))\\
&= \min_{z} \max_\alpha \frac{1}{2} \norm{v}^2+\norm{z}_{1,2}+ \dotp{\alpha}{A(v\odot z)} -F_0^*(\alpha).
\end{split}
\end{equation}
Note that the inner minimisation problem is convex and provided that the inner problem has a unique solution $z$ and $F_0$ is differentiable,  the function $f$ is differentiable with $$
\nabla f(v) = v +( \dotp{z_g}{ A_g^\top \alpha})_{g\in\Gg}
$$
Indeed, since $v\odot A^\top \alpha \in\partial \norm{z}_{1,2}$, $v\odot A^\top \alpha$ is unique on the support of $z$ and since the group support of $z$ is contained in the support of $v$, $z\odot A^\top \alpha$ is unique. One condition to ensure that $z$ is unique is if $F_0$ is strongly convex and $A$ is injective.

\paragraph{Remarks on conditioning}
This analysis raises the question of the which option to favor among the three. 
It is clear that Option 3 is computationally the most expensive, since one needs to solve an $\ell_1$ minimisation problem to compute the gradient, while the gradient in Option 2 can be computed in closed form by inverting a linear system. 
For Option 3, the resolution of this inner $\ell_1$ problem can leverage any existing solvers, and it is possible to re-use another VarPro method, which corresponds to doing a three-level programming. We explore this option in the numerical examples below.  
However, in terms of conditioning of the Hessian, Option 3 is the most desirable as we now explain.

Given a bloc-matrix $H = [A,B; B^\top,D]$, its Schur complement with respect to $D$ is denoted $H/D \eqdef A - B^\top D^{-1} B$.
Given $(u,v,w)\mapsto f(u,v,w)$ and denoting its Hessian by $H$, the Hessian of $u\mapsto \min_{v,w} f(u,v,w)$ is the Schur complement of $H$ with respect to $\partial_{vw}^2 f$ and the Hessian of  $(u,v)\mapsto \min_{w} f(u,v,w)$ is the Schur complement of $H$ with respect to $\nabla^2_{u} f(u,v,w)$, assuming that these Hessians exist.
In general, the condition number of the Schur complement of a matrix is no larger than the condition number of the original matrix when it is symmetric positive (or negative) semi-definite
due to the interlacing property of eigenvalues \cite{smith1992some}, that is for an $r\times r$ submatrix of $H$, we have
$$
\lambda_i(H) \leq \lambda_i(H/A) \leq \lambda_{i+n-r} (H).
$$
We also have the following interlacing property  \cite{fan2002schur} for schur complements: if $H$ is symmetric semi-definite, $\alpha'\subseteq \alpha \subseteq [n]$ and $H[\alpha]$ is the submatrix indexed by $\alpha$,
$$
\lambda_i(H/H[\alpha']) \leq \lambda_i(H/H[\alpha]) \leq \lambda_{i+\abs{\alpha} - \abs{\alpha'}}(H/H[\alpha'])
$$
so the condition number of $H/H[\alpha]$ is no worse than the condition number of $H/H[\alpha']$. Putting aside the difficulty that $v\mapsto \min_{u,w} f(u,v,w)$ may not be differentiable, these interlacing properties of Schur complements suggest that this formulation  is better conditioned than the alternative formulation of $(v,w)\mapsto \min_{u} f(u,v,w)$.

\paragraph{Numerical illustrations}

The methods that we compare to are:
\begin{itemize}
\item VarPro (1 inside). This is minimising the function $f$ defined in \eqref{eq:varpro_1inside} where the inner problem is the solution to a linear system.
\item VarPro (2 inside). This is minimising the function$f$ defined in \eqref{eq:2inside} where the inner problem is the solution to an $\ell_1$ problem.
\item Iterative reweighted least squares (we followed the implementation as described in  \cite{chartrand2008iteratively}).
\item  Reweighted $\ell_1$ \cite{chen2010convergence}.
\end{itemize}
Since the $\ell^q$ regularizer we consider is non-convex, we analyze the performances of these algorithms according both to its ability to select ``good '' minimizers and its speed of convergence. 

In order to asses the quality of the computed solutions, we consider a noiseless recovery problem from observations $Y = A\xp_*$ where $\xp_*$ has $s$ nonzero rows, and check wether the considered algorithms  are able to recover $x^*$ as a function of $s$.
We consider, for $A\in\RR^{m\times n}$, $Y\in\RR^{m\times T}$ and $q = \frac23$, the constrained problem
$$
\min_{\xp\in\RR^{n\times T}} \sum_{i=1}^n \norm{\xp^{(i)}}_2^q \quad \text{s.t.}\quad A\xp = Y.
$$
Here, $\xp^{(i)}$ denotes the $i$th row of $\xp$. 
We consider the case of $n=256$, $s=40$ and $T\in\ens{1,50,100}$ and several values of $m$. For each $T$, we generate 100 random instances of $A\in\RR^{m\times n}$, $\xp_*\in\RR^{s\times T}$ which is $s$-row sparse and let $y = A\xp_*$.  Then, for each value of $m$, we take the first $m$ rows of $A$ and first $m$ entries of $y$, run our methods for this data, then count the number of times for which one has successful recovery (here, successful means that the relative error in $2$-norm is less than 0.01). The results for $T=1,50,100$ are shown in Figure \ref{fig:noncvx}.
In terms of recovering the phase transition, the observation is that when $T$ is large, VarPro obtains on-par or slightly better than IRLS and reweighted-$\ell_1$. One of the advantages of VarPro with only a linear solve on the inner problem is that it is substantially faster than the other methods (see the next paragraph). When $T=1$, the performance is more varied, with IRLS performs the best, and Varpro  with $\ell_1$ on the inside and reweighted $\ell_1$ both out perform Varpro with a linear solver on the inside. It should be noted that since since IRLS is gradually decreasing a regularisation parameter, it is a form of graduated non-convexity method, and this is likely to lead to better local minimums than directly solving the $\ell_{2/3}$ optimisation problem.

To illustrate the time-performance of the different algorithms, we consider
$$
	\min_{\xp\in\RR^{n\times T}}  \sum_{i=1}^n \norm{\xp^{(i)}}_2^q +\frac{1}{2\lambda}\norm{ A\xp - Y}_F^2
$$
for $\lambda = 0$, and where $A\in\RR^{m\times n}$ is a random Gaussian matrix with $n=256$ and $\xp_*$ is row-sparse with $40$ nonzero rows. For $T=100$ and $T=50$, we set $m=45$ and for $T=1$, we set $m=85$. In each case, we generate 20 random instances, and given each problem instance, we apply each method and record the running time against the objective error (this is the error to the best objective value found by the 4 methods). The results are show in Figure \ref{fig:ncvx_time}.

\begin{figure}
\begin{center}
\begin{tabular}{c@{\hspace{.5em}}c@{\hspace{.5em}}c}
$T=1$&$T=50$&$T=100$\\
\includegraphics[width = 0.31\linewidth]{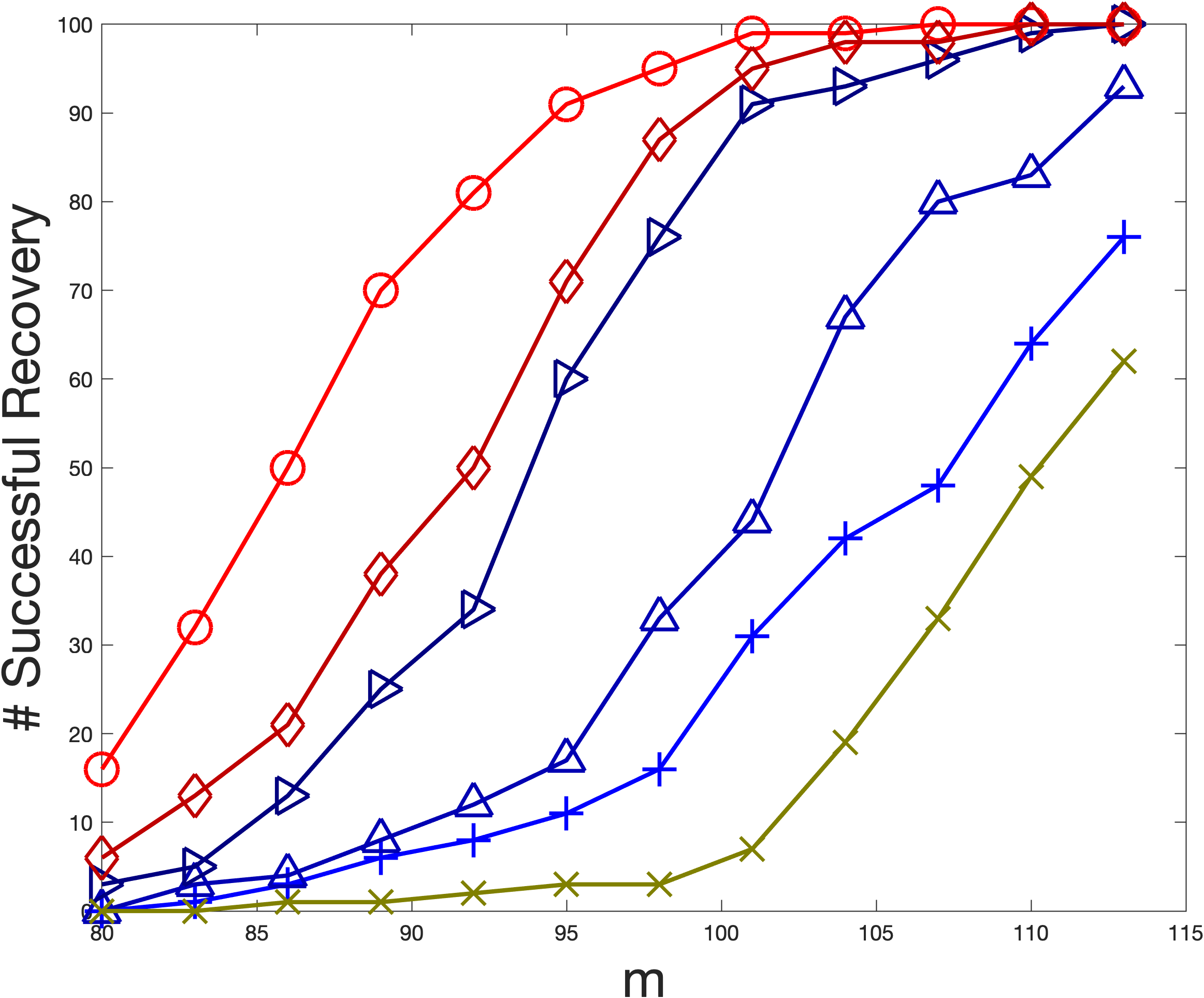}&
\includegraphics[width = 0.31\linewidth]{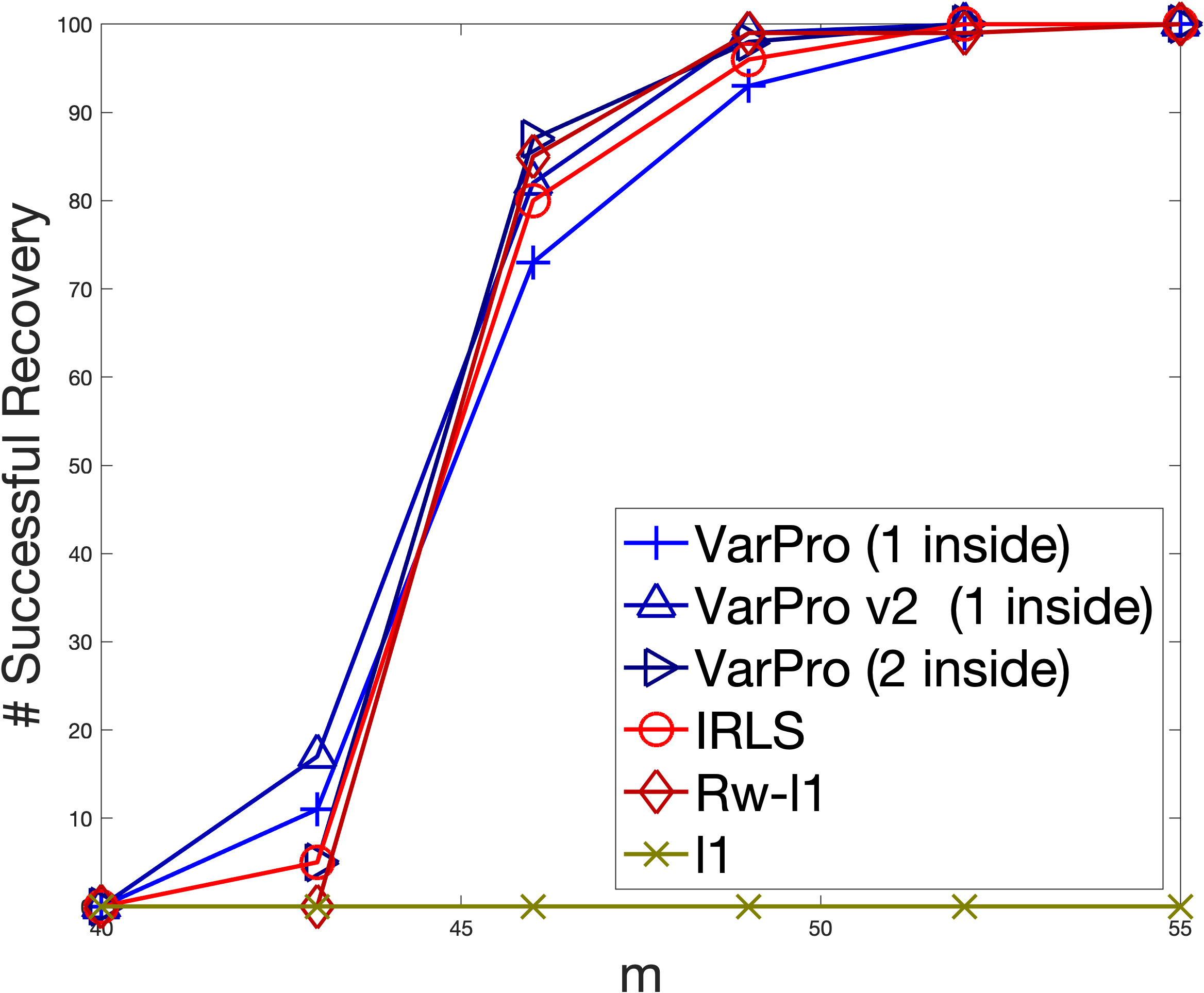}&
\includegraphics[width = 0.31\linewidth]{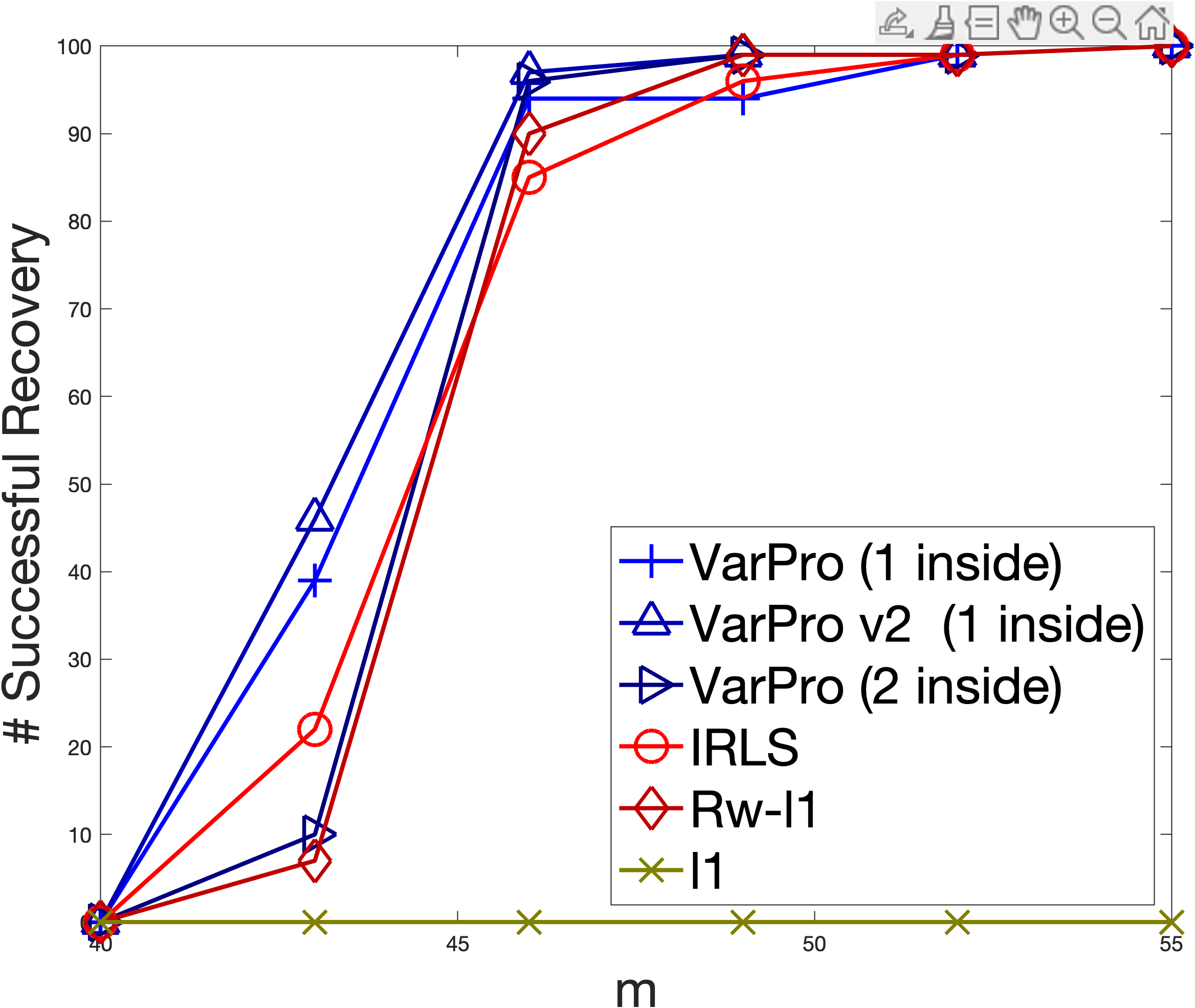}
\end{tabular}
\end{center}
\caption{Group-$\ell_{2/3}$ Basis Pursuit over 100 random instances. The underlying vector $\xp$ a matrix of dimension $n=256$ by $T$, with $ s=40$ nonzero rows. The data matrix is a random Gaussian matrix of dimension $m$ by $n$.  For each of the plots, we consider different values of $m$, and the plot shows for each method, how many of the 100 random problem instances are exactly reconstructed. To show the impact of an improved starting point, VarPro v2 (1 inside) re-runs VarPro (1 inside) an additional 2 times with random initial values and takes the best result.   \label{fig:noncvx}}
\end{figure}

\begin{figure}
\begin{tabular}{c@{\hspace{.5em}}c@{\hspace{.5em}}c}
$T=1$&$T=50$&$T=100$\\
\includegraphics[width = 0.31\linewidth]{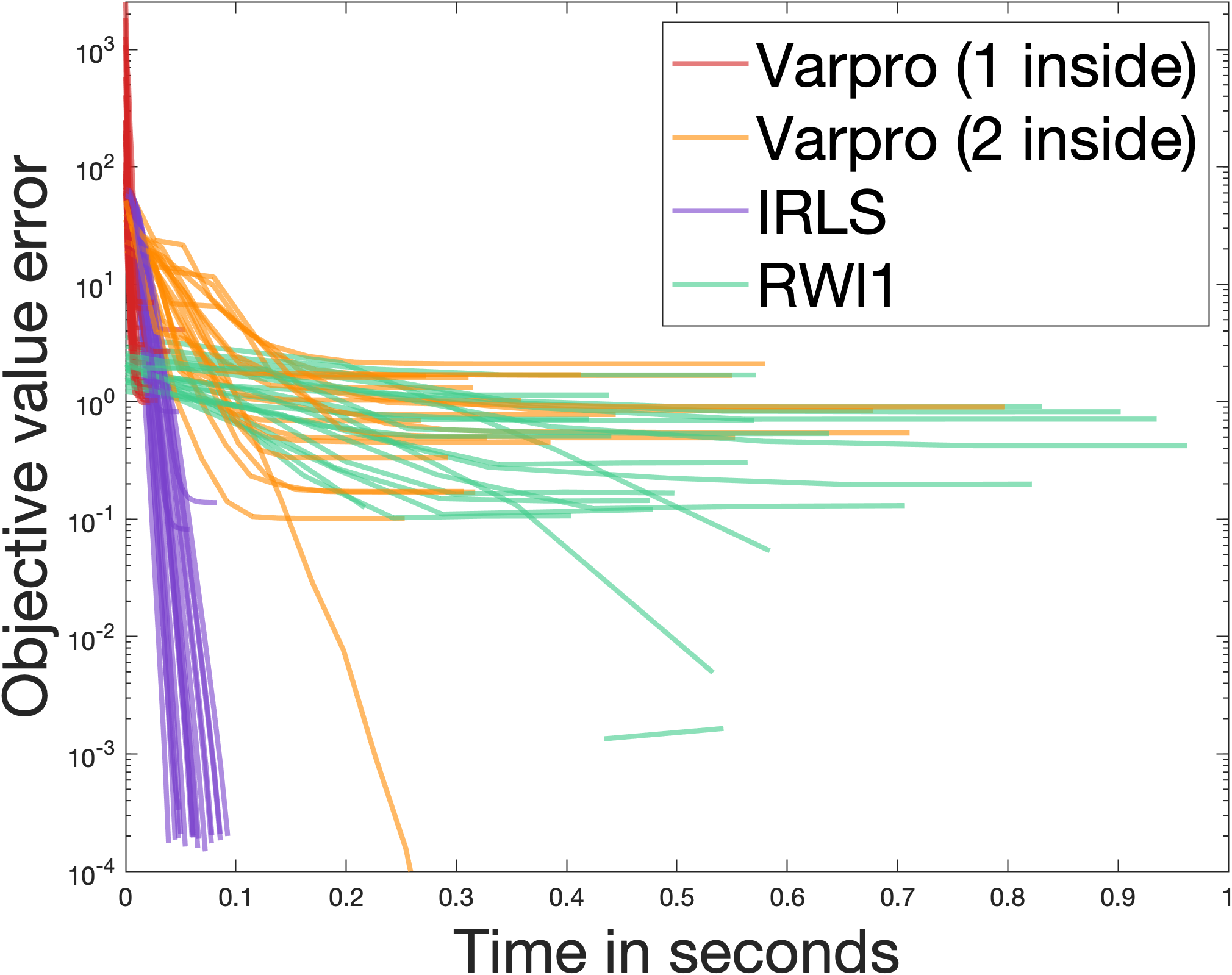}&
\includegraphics[width = 0.31\linewidth]{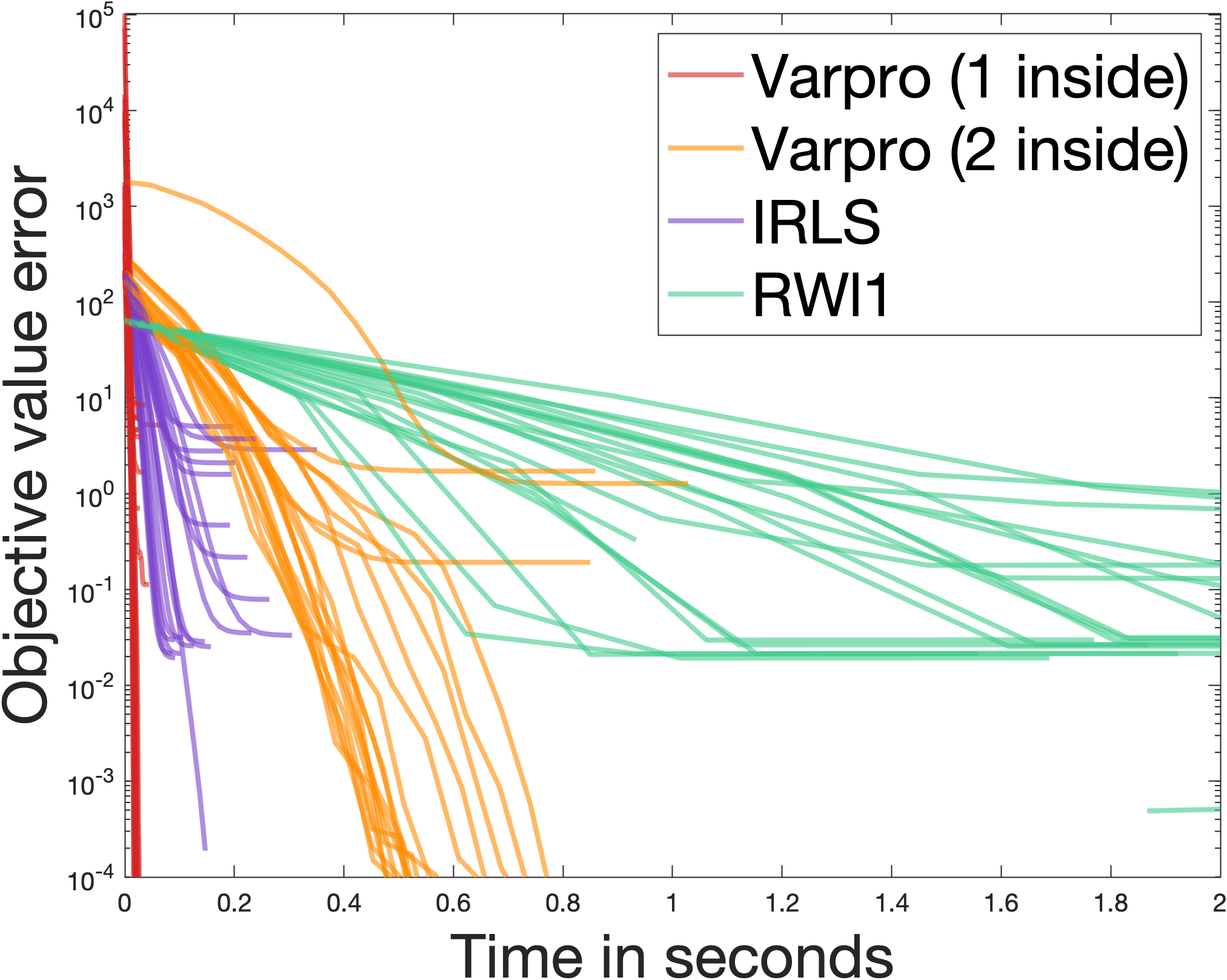}&
\includegraphics[width = 0.31\linewidth]{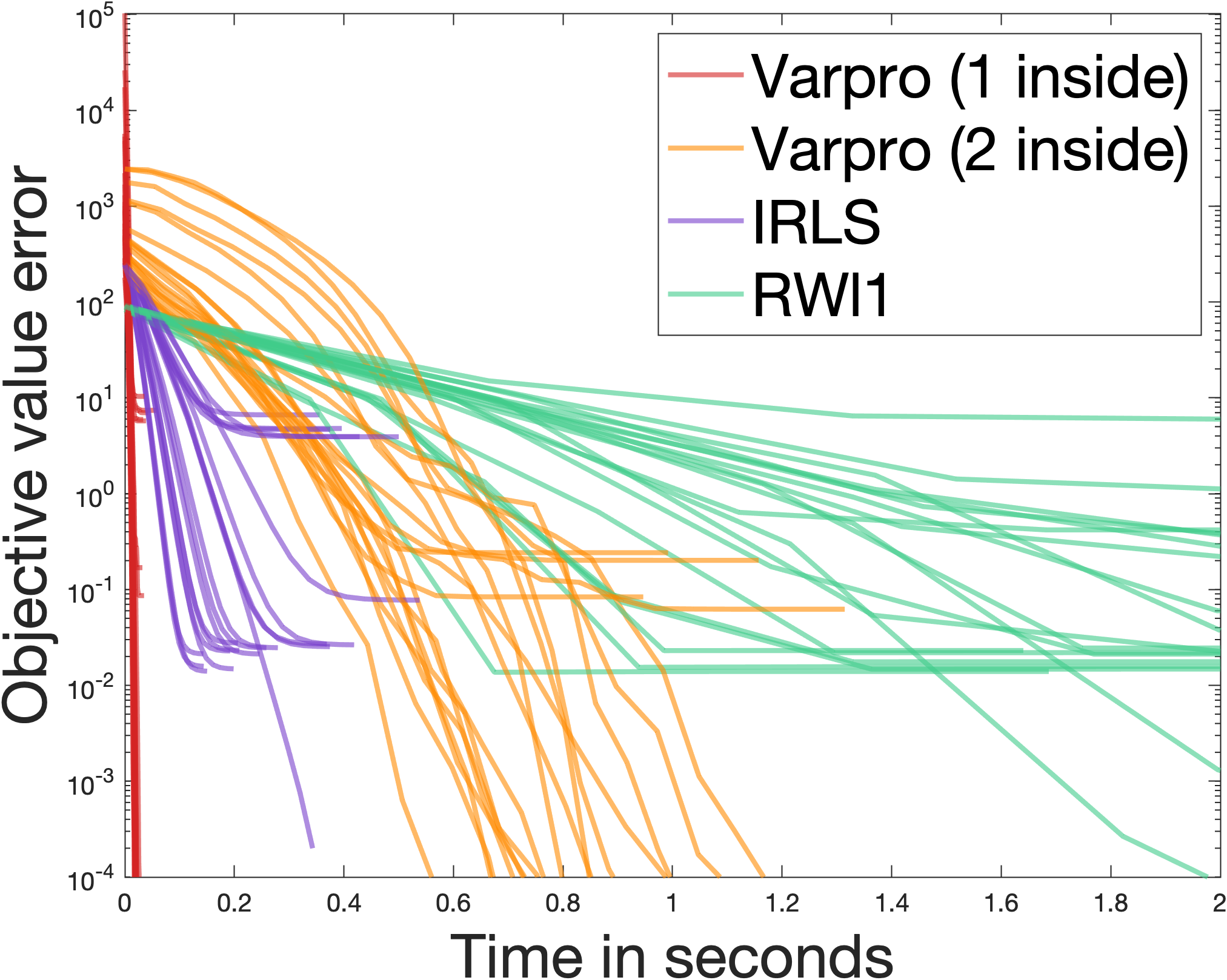}
\end{tabular}
\caption{Group-$\ell_{2/3}$ Lasso over 20 random instances. The regularisation parameter is 0.1.  The graphs show a plot of objective error against computational time.\label{fig:ncvx_time} }
\end{figure}


\section*{Conclusion}

We have presented a generic and versatile class of optimization methods, which can cope with a wide range of non-smooth losses and regularization functionals. 
An appealing feature of these approaches is that they rely on smooth optimization technics and can thus leverage standard efficient solvers such as quasi-Newton. 
On the theoretical side, we highlighted that handling generalized sparse regularizers such as total variation is more intricate than the Lasso case, and in particular differentiability requires a greater care. We also draw connexions with mirror-descent methods, which leads to constant independent of the grid-size. Unfortunately, although we were able to partly lift difficulties due to non-convexity, a full convergence analysis is still beyond reach with our proof technics.

\section*{Acknowledgments}

The work of G. Peyr\'e was supported by the French government under management of Agence Nationale de la Recherche as part of the ``Investissements d'avenir'' program, reference ANR19-P3IA-0001 (PRAIRIE 3IA Institute) and by the European Research Council (ERC project NORIA). 

 \appendix

 \section{Envelope theorem}

 \begin{defn}
 We say that $f:\RR^n\to \bar \RR$ is strictly continuous at $\bar x$ if there is a neighbourhood $\Nn_{\bar x}$  around $\bar x$ such that 
 $$
 \abs{f(x)-f(x')} \leq \norm{x-x'}, \qquad \forall x,x' \in \Nn_{\bar x}
 $$
 We say that $f$ is strictly differentiable  at $\bar x$ if $f(\bar x)$ is finite and there is a vector $v$, which is the gradient $\nabla f(\bar x)$, such that  
 $$
 \lim_{x,x' \to x} \frac{f(x')-f(x) - \dotp{v}{x'-x}}{\norm{x-x'}} = 0, \quad \text{with} \quad x'\neq x.
 $$
 \end{defn}
 Note that strict differentiability is stronger that simply differentiability and if there is an open set $\Oo$ on which $f$ is finite, then $f$ is strictly differentiable on $\Oo$ is equivalent to $f$ is $\Cc^1$ on $\Oo$ \cite[Cor 9.19]{rockafellar2009variational}
 
\begin{defn}
Let $\Oo\subset \RR^n$ be an open set. A function $f:\Oo\to \RR$ is said to be  lower-$\Cc^1$ on $\Oo$, if for all $x\in \Oo$, there exists a neighbourhood $\Nn$ of $x$ with the representation $$ f(x) = \max_{t\in T} \phi(t,x)$$
in which the functions $\phi(t,\cdot)$ are $\Cc^1$, the set $T$ is compact, and $\phi(t,x)$ and $\nabla_x \phi(t,x)$ depend continuously on $(t,x)\in T\times V$.
\end{defn}

\begin{thm}\cite[Theorem 10.31]{rockafellar2009variational} \label{thm:rockafellar}
Suppose $f:\Oo \to \RR$ is lower-$\Cc^1$ on an open set $\Oo$, then $f$ is strictly differentiable on a set $D$ with $\Oo\setminus D$ negligible. Moreover, $f$ is differentiable at $x$ if
$
\enscond{\nabla_x \phi(t,x)}{t\in T(x)} 
$ is single-valued.
\end{thm}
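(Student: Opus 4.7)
The plan is to exploit the representation $f(x) = \max_{t\in T}\phi(t,x)$ on a neighborhood $V$ of each point $\bar x\in\Oo$, with $T$ compact and both $\phi$ and $\nabla_x\phi$ jointly continuous on $T\times V$. I would organize the argument in four steps, proving the (easier) differentiability criterion along the way and upgrading to strict differentiability at the end.

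First, I would establish that $f$ is locally Lipschitz on $\Oo$. On a closed ball $\bar V\subset V$, joint continuity of $\nabla_x\phi$ and compactness of $T\times\bar V$ give a uniform bound $\norm{\nabla_x\phi(t,x)}\leq \Lip$. The mean value inequality applied to each $\phi(t,\cdot)$ and a max over $t$ then yields $\abs{f(x)-f(x')}\leq \Lip\,\norm{x-x'}$ on $\bar V$. In the same breath, the argmax set $T(x)\eqdef\argmax_{t\in T}\phi(t,x)$ is nonempty and compact, and a standard subsequence extraction using continuity of $\phi$ shows that $T(\cdot)$ is outer semicontinuous: whenever $x_n\to x$ and $t_n\in T(x_n)$, any cluster point of $(t_n)$ lies in $T(x)$.

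Second, I would derive the envelope formula for the one-sided directional derivative
\[
f'(x;w) = \max_{t\in T(x)} \dotp{\nabla_x \phi(t,x)}{w}.
\]
The lower bound follows by fixing $t\in T(x)$ and writing $f(x+\epsilon w)-f(x)\geq \phi(t,x+\epsilon w)-\phi(t,x)=\epsilon\dotp{\nabla_x\phi(t,x)}{w}+o(\epsilon)$ from the $\Cc^1$ expansion of $\phi(t,\cdot)$. For the upper bound, choose any $t_\epsilon\in T(x+\epsilon w)$, so that $f(x+\epsilon w)-f(x)\leq \phi(t_\epsilon,x+\epsilon w)-\phi(t_\epsilon,x)$; apply the mean value theorem inside and use joint continuity of $\nabla_x\phi$ together with outer semicontinuity of $T(\cdot)$ to pass to the limit along any cluster point $t^*\in T(x)$ of $t_\epsilon$. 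This yields the claimed formula.

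Third, I would deduce the differentiability clause. If $\enscond{\nabla_x\phi(t,x)}{t\in T(x)}$ reduces to a single vector $v$, the envelope formula collapses to $f'(x;w)=\dotp{v}{w}$, which is linear in $w$. Combined with local Lipschitz continuity (which upgrades the directional expansion to a uniform Fréchet one), this gives $f(x+w)-f(x)-\dotp{v}{w}=o(\norm{w})$, hence classical differentiability at $x$ with $\nabla f(x)=v$.

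Fourth, I would upgrade to strict differentiability on a set $D$ with $\Oo\setminus D$ negligible. Local Lipschitz continuity and Rademacher's theorem provide a set $D_0\subseteq\Oo$ of full Lebesgue measure on which $f$ is classically differentiable. The step from classical to strict differentiability is the delicate one: for a locally Lipschitz function, strict differentiability at $x$ is equivalent to the Clarke generalized gradient $\partial^C f(x)$ being a singleton. Using the envelope formula and the Clarke regularity of lower-$\Cc^1$ functions, one can identify $\partial^C f(x) = \mathrm{conv}\enscond{\nabla_x\phi(t,x)}{t\in T(x)}$, and outer semicontinuity of $T(\cdot)$ implies that if $\partial^C f$ is a singleton at $x$, it remains so on a neighborhood, so $f$ is $\Cc^1$ there. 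I would then let $D$ be the interior of $D_0$, where differentiability on a full-measure set forces $\partial^C f$ to be single-valued. The main obstacle is precisely this last step: Rademacher alone does not give strict differentiability, and one has to exploit the lower-$\Cc^1$ structure (through Clarke regularity and outer semicontinuity of $T(\cdot)$) to show that the set of points where $f$ fails to be strictly differentiable remains negligible.
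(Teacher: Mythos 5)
Your Steps 1--3 are sound: local Lipschitz continuity, the Danskin-type formula $f'(x;w)=\max_{t\in T(x)}\dotp{\nabla_x\phi(t,x)}{w}$, and the deduction of differentiability when $\enscond{\nabla_x\phi(t,x)}{t\in T(x)}$ is a singleton (linearity of the semiderivative plus equi-Lipschitz difference quotients) correctly establish the ``Moreover'' clause. Note that the paper itself does not prove this theorem at all --- it simply cites \cite[Thm 10.31, Thm 9.18]{rockafellar2009variational} --- so you are supplying a genuine argument where the paper supplies a reference; your route through the semiderivative matches the machinery the paper uses elsewhere (its Prop.~\ref{prop:semiderivative}).

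Step 4, however, has a genuine gap, and it is exactly where you flagged the difficulty. Two specific problems. First, the claim that ``outer semicontinuity of $T(\cdot)$ implies that if $\partial^C f$ is a singleton at $x$, it remains so on a neighbourhood'' is false: outer semicontinuity only gives $\limsup_{x'\to x}T(x')\subseteq T(x)$, which does not prevent $T(x')$ (hence $\partial^C f(x')$) from being large at points arbitrarily close to $x$. A convex function such as $f(x)=\sum_k 2^{-k}\abs{x-q_k}$ (an enumeration of the rationals) is lower-$\Cc^1$, has singleton subdifferential at every irrational point, yet is nondifferentiable on a dense set; singleton-ness never propagates to a neighbourhood. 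Second, and consequently, setting $D\eqdef\mathrm{int}(D_0)$ destroys the measure-theoretic conclusion: in that same example $D_0$ has full measure but empty interior, so $\Oo\setminus D=\Oo$ is not negligible. The correct assembly uses the ingredients you already have, but pointwise rather than on neighbourhoods: lower-$\Cc^1$ functions are Clarke regular, so $f^{\circ}(x;\cdot)=f'(x;\cdot)$ at every $x$; at any point $x\in D_0$ of ordinary (Rademacher) differentiability this common directional derivative is linear, hence $\partial^C f(x)=\{\nabla f(x)\}$ is a singleton, which for a locally Lipschitz function is equivalent to strict differentiability at $x$. Thus one may take $D=D_0$ itself, with no interior operation and no propagation claim, and $\Oo\setminus D$ is negligible by Rademacher.
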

\begin{proof}
This is a direct consequence of  \cite[Thm 10.31]{rockafellar2009variational} where it is shown that if $f$ is a lower-$\Cc^1$ function, then:
\begin{itemize}
\item It is strictly continuous and regular on $\Oo$ and is semidifferentiable. 
\item $f$ is strictly differentiable on a set $D$ with $\Oo\setminus D$ negligible. 
\item $\partial f(x) = \mathrm{con}\enscond{\nabla f_t(x)}{t\in T(x)}$
\end{itemize}
Finally, by  \cite[Theorem 9.18]{rockafellar2009variational},  strict differentiability at $x$ is equivalent to $f$ strictly continuous and regular with $\partial f(x)$ single-valued.
%

\end{proof}

We recall the notion of semiderivatives (note that this is different from directional derivatives, since we take the limit along all $w'$ converging to $w$.
\begin{defn} \cite[Def 7.20]{rockafellar2009variational}
Let $f:\RR^n\to \bar \RR$ and suppose thst $\bar x\in \mathrm{dom}(f)$. If
$$
\lim_{\substack{\tau \to 0\\ w'\to w}} \frac{f(\bar x+ \tau w')-f(\bar x)}{\tau}
$$
exists,  it is is the semiderivative  of $f$ at $\bar x$ for $w$, and $f$ is semidifferentiable at $\bar x$ for $w$. If this holds for every $w$, $f$ is semidifferentiable at $\bar x$
\end{defn}
In  \cite[Cor 7.22]{rockafellar2009variational}, we have the following result:
\begin{prop}\label{prop:semiderivative}
A function $f:\RR^n\to \bar \RR$ is differentiable at $\bar x$, a point with $f(\bar x)$ finite, if and only if $f$ is semidifferentiable at $\bar x$ and the semiderivative for $w$ depends linearly on $w$.
\end{prop}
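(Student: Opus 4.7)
The plan is to prove the two directions of the biconditional separately, with the forward direction essentially unpacking the definition of Fr\'echet differentiability, and the reverse direction leveraging a compactness argument on the unit sphere.

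For the forward direction, suppose $f$ is differentiable at $\bar x$ with gradient $v = \nabla f(\bar x)$. Fix $w\in\RR^n$ and consider any sequences $\tau_n \to 0^+$ and $w_n \to w$. Set $h_n \eqdef \tau_n w_n$, so $h_n \to 0$ and $\|h_n\| = \tau_n \|w_n\|$ with $\|w_n\|$ bounded. By definition of differentiability, $f(\bar x + h_n) - f(\bar x) = \dotp{v}{h_n} + o(\|h_n\|) = \tau_n \dotp{v}{w_n} + o(\tau_n)$. Dividing by $\tau_n$ and taking the limit gives the semiderivative $\dotp{v}{w}$, which is evidently linear in $w$.

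For the reverse direction, assume the semiderivative $df(\bar x)(w)$ exists for every $w$ and depends linearly on $w$. Writing $df(\bar x)(w) = \dotp{v}{w}$ for a vector $v$ (determined by evaluation on a basis using linearity), I would argue by contradiction that $f$ is differentiable at $\bar x$ with gradient $v$. If not, there exist $\epsilon > 0$ and a sequence $h_n \to 0$, $h_n \neq 0$, such that $|f(\bar x + h_n) - f(\bar x) - \dotp{v}{h_n}|/\|h_n\| \geq \epsilon$. Set $\tau_n \eqdef \|h_n\| \to 0^+$ and $w_n \eqdef h_n/\|h_n\|$ so that $\|w_n\|=1$. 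By compactness of the unit sphere, pass to a subsequence along which $w_n \to w_*$ for some unit vector $w_*$. Then
\[
\frac{f(\bar x+\tau_n w_n) - f(\bar x)}{\tau_n} \;\longrightarrow\; df(\bar x)(w_*) = \dotp{v}{w_*},
\]
by the definition of semiderivative, while $\dotp{v}{w_n} \to \dotp{v}{w_*}$ by continuity of the linear form. Subtracting these gives
\[
\frac{f(\bar x+h_n) - f(\bar x) - \dotp{v}{h_n}}{\|h_n\|} \;\longrightarrow\; 0,
\]
contradicting the lower bound $\epsilon > 0$ on the absolute value of the same quantity. Hence $f$ must be differentiable at $\bar x$ with gradient $v$.

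The only mild subtlety, which I expect to be the crux, is making precise that a vector $v$ representing the linear functional $w\mapsto df(\bar x)(w)$ really exists: this uses linearity on a basis, but one should also note that the semiderivative is automatically continuous in $w$ (being a limit along joint convergence $(\tau, w') \to (0^+, w)$), so that the resulting linear form is bounded, hence represented by some $v\in\RR^n$. Once this is observed, the compactness-plus-subsequence argument above closes the proof cleanly. No assumption of convexity, lipschitzness or positivity of $\tau$ in both directions is needed beyond what is in the definition.
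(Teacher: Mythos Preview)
The paper does not supply its own proof of this proposition: it is stated in the appendix as a direct quotation of \cite[Cor.~7.22]{rockafellar2009variational}, with no argument given. Your argument is correct and self-contained; the forward direction is immediate from the definition of differentiability, and your compactness-on-the-unit-sphere contradiction for the converse is the standard route (and is essentially what underlies the treatment in Rockafellar--Wets 7.21--7.22). One minor remark: in $\RR^n$ every linear functional is automatically continuous, so once linearity of $w\mapsto df(\bar x)(w)$ is assumed, the representing vector $v$ exists without any separate continuity argument; your side comment on this point is harmless but unnecessary.
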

In certain cases, we can show that the inner problem of our VarPro function can be restricted to a compact set and directly apply Theorem \ref{thm:rockafellar} to deduce differentiability. However, in cases where this is not possible, we will directly compute the semiderivative and apply Proposition \ref{prop:semiderivative}.
 \section{Standard Gradient descent results}

\begin{lem}[Gradient bound]\label{lem:grad_bound_desc_lemma}
Suppose that $f\in C^{1,1}$ and $\nabla f$ being $L_2$-Lipschitz with respect to the Euclidean norm.
Let $$w^{k+1} = w^k -\frac{1}{L_2} \nabla f(w^k).$$
For all $T\in\NN$ and $j\in [1,2]$,  then for $C\eqdef  2(f(w^0) - f(w^*)) L_2$, 
$$
\sum_{k\leq T} \norm{\nabla f(\xp^k)}^2 \leq   C
\qandq
\min_{k\leq T}\norm{\nabla f(\xp^k)}^2 \leq \frac{C  }{T}.
$$
\end{lem}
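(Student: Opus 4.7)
The plan is to use the standard descent lemma for Lipschitz-smooth functions, then telescope.

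First, I would apply the descent lemma: since $\nabla f$ is $L_2$-Lipschitz, for any $w, w' \in \RR^n$,
\begin{equation*}
f(w') \leq f(w) + \langle \nabla f(w), w' - w \rangle + \frac{L_2}{2}\|w'-w\|^2.
\end{equation*}
Plugging in $w = w^k$ and $w' = w^{k+1} = w^k - \frac{1}{L_2}\nabla f(w^k)$ gives
\begin{equation*}
f(w^{k+1}) \leq f(w^k) - \frac{1}{L_2}\|\nabla f(w^k)\|^2 + \frac{1}{2L_2}\|\nabla f(w^k)\|^2 = f(w^k) - \frac{1}{2L_2}\|\nabla f(w^k)\|^2.
\end{equation*}

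Next, I would rearrange this one-step bound into $\frac{1}{2L_2}\|\nabla f(w^k)\|^2 \leq f(w^k) - f(w^{k+1})$ and telescope over $k = 0, \ldots, T$. Since $f(w^{T+1}) \geq f(w^*)$, this yields
\begin{equation*}
\sum_{k=0}^{T} \|\nabla f(w^k)\|^2 \leq 2L_2\bigl(f(w^0) - f(w^{T+1})\bigr) \leq 2L_2\bigl(f(w^0) - f(w^*)\bigr) = C,
\end{equation*}
which is the first claim. The second claim follows by bounding the minimum by the average: $\min_{k \leq T} \|\nabla f(w^k)\|^2 \leq \frac{1}{T}\sum_{k \leq T}\|\nabla f(w^k)\|^2 \leq C/T$.

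There is no real obstacle here; this is a textbook computation. The only subtlety worth mentioning is that the statement involves $j \in [1,2]$ in the quantifier but the inequalities do not actually depend on $j$, so I would simply ignore that parameter (it appears to be vestigial in the statement). The whole argument is two displays long once the descent lemma is invoked.
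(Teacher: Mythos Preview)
Your proof is correct and follows exactly the same route as the paper: apply the descent lemma, rearrange, telescope, and bound the minimum by the average. The only difference is that the paper's proof actually carries the parameter $j\in[1,2]$ through and proves the more general bound $\sum_{k\leq T}\norm{\nabla f(w^k)}^{j}\leq (2L_2)^{j/2}(f(w^0)-f(w^*))^{j/2}T^{(2-j)/2}$ via the power-mean inequality; this explains why $j$ appears in the quantifier even though the displayed inequalities in the statement are the $j=2$ case, which your argument covers completely.
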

 \begin{proof}[Proof of Lemma \ref{lem:grad_bound_desc_lemma}]

By Taylor expansion, for all $v,w$,
$$
f(v) \leq f(w) + \nabla f(w)^\top (v-w) + \frac{L_2}{2}\norm{v-w}^2.
$$ 
So,\begin{equation}\label{eq:guarantee_descent}
f(w^{k+1})\leq f(w^k) - \frac{1}{2L_2} \norm{\nabla f(w^k)}^2,
\end{equation}
Note also that
$$
\norm{\nabla f(w^k)}^j \leq  \pa{(2L_2)(f(w^k) - f(w^{k+1})) }^{j/2}
$$
So, for $j\in [1,2]$,
\begin{align*}
\sum_{k\leq T} \norm{\nabla f(w^k)}^j &\leq (2L_2)^{\frac{j}{2}} \sum_{k\leq T} \pa{f(w^k) - f(w^{k+1}) }^{j/2} \\
&\leq  (2L_2)^{\frac{j}{2}} \pa{ \sum_{k\leq T} \pa{f(w^k) - f(w^{k+1}) } }^{j/2} T^{(2-j)/2}\\
&\leq (2L_2)^{\frac{j}{2}}  \pa{f(w^0) - f(w^*) }^{j/2} T^{(2-j)/2}
\end{align*}
and 
$$
\min_{k\leq T}\norm{\nabla f(w^k)}^2 \leq \frac{2L_2}{T} \pa{ f(w^0) - f(w^*)}.
$$

\end{proof}

\section{ADMM and Primal-Dual}\label{sec:admm}

 In the following, we compare against ADMM 
 \cite{boyd2011distributed} and Primal-Dual splitting \cite{Chambolle2011}.
\paragraph{ADMM}
ADMM \cite{boyd2011distributed} seeks to minimise for $\tau>0$,
\begin{align*}
\min_{\xp, z} \max_{\psi} \frac{1}{2\lambda} \norm{A\xp - y}_2^2 + \norm{z}_1 +\dotp{\psi}{ z - L \xp } + \frac{\tau}{2} \norm{z  - D\xp}_2^2
\end{align*} 
by the iterations
\begin{align*}
&\xp_{k+1} = \argmin_\xp  \frac{1}{2\lambda} \norm{A\xp - y}_2^2  - \dotp{\psi_k}{  L \xp } + \frac{\tau}{2} \norm{z_k  - D\xp}_2^2\\
& z_{k+1} = \argmin_z  \norm{z}_1 +\dotp{\psi}{ z } + \frac{\tau}{2} \norm{z  - D\xp}_2^2\\
& \psi_{k+1} = \psi_k + \tau (z_k - L \xp_k )
\end{align*}
The update on $\xp_k$ requires solving the linear system
$$
(A^\top A  +\lambda \tau D^\top D) \xp  = A^\top y +\lambda D^\top \psi_k +\lambda \tau D^\top z_k
$$
For this step, we carry out a reordering of the columns and a cholesky factorisation which is reused throughout the iterations when carrying out the matrix inversion.

\paragraph{Primal-Dual} 
 Primal-Dual splitting \cite{Chambolle2011} solves
 $$
 \min_\xp \sup_z \dotp{K\xp}{z} - F^*(z) + G(\xp)
 $$
The iterations are, for $\sigma,\tau>0$ such that $\sigma \tau \leq 1/\norm{K}^2$,
\begin{align*}
z^{n+1} = \Prox_{\sigma F^*} (z^n + \sigma K \bar x^n)\\
x^{n+1}=\Prox_{\tau G}(x^n - \tau K^* z^{n+1})\\
\bar x^{n+1} = x^{n+1} + \theta (x^{n+1} - x^n)
\end{align*}

For $\min_x  \norm{Dx}_1 + \frac{1}{2\lambda}\norm{Ax - y}_2^2$, we can take $K=D$, $F(z) = \norm{z}_1$ and $G(x) = \frac{1}{2\lambda}\norm{Ax - y}_2^2$. In this case,
$$
\Prox_{\tau G}(z) = \pa{\Id + \frac{\tau}{\lambda} A^\top A}^{-1}(\frac{\tau}{\lambda} A^\top y + z).
$$
For the case where $X$ is a masking operation, the matrix inversion in the update of $\xp_k$ is a simple rescaling operation. Where the matrix $X$ does not admit an efficient inversion formula (e.g. random Gaussian matrices), we carry out once a cholesky factorisation which is used throughout the iterations.

For $\min_x \norm{Dx}_1 + \frac{1}{\lambda}\norm{Ax- y}_1$ where $A\in \RR^{m\times n}$, $D\in \RR^{p\times n}$ and $y\in \RR^m$, we write this as
\begin{align*}
&\min_{x\in\RR^n,z\in \RR^{p+m}} \norm{z}_1 \quad \text{s.t.}\quad \begin{pmatrix}
Dx\\ Ax-y
\end{pmatrix} = z\\
&\min_{x\in\RR^n,z\in \RR^{p+m}} \max_{\xi \in \RR^{p+m}} \norm{z}_1 +\dotp{K \begin{pmatrix}
x\\z
\end{pmatrix} }{\xi} - \dotp{\begin{pmatrix}
0\\ y
\end{pmatrix}}{\xi}
\end{align*}
where 
$$
K = \begin{pmatrix}
D & -\Id_p & 0\\
A & 0 &-\Id_m
\end{pmatrix}.
$$
We let $G((x,z)) = \norm{x}_1 + \norm{z}_1$ and $F^*(\xi) =  \dotp{\begin{pmatrix}
0\\ y
\end{pmatrix}}{\xi}$.

\section{Quadratic variational form for matrices}\label{app:matrices}

We consider the following multitask problem \cite{geer2016chi}, for $A\in \RR^{m\times n}$, $Y\in \RR^{m\times T}$, and $X\in\RR^{n\times T}$
\begin{equation}\label{eq:multitask}
\min_{X} \frac{1}{\lambda} \norm{AX - Y}_*  + \norm{X}_{1,2}, 
\end{equation}
where $\norm{X}_{1,2} = \sum_{i} \norm{X_i}_2$ with $X_i$ denoting the $i$th row of $X$. The loss function is the nuclear norm (sum of the singular values of  a matrix) and this is known to  have  a variational form for $X\in \RR^{n_1\times n_1}$
\begin{align*}
	\norm{X}_* &= \min_{\Sigma\in \SS_+^{n_1\times n_1}} \frac12\dotp{\Sigma X}{X} + \frac12\tr(\Sigma)\\
& =  \min_{U\in \RR^{n_1\times n_1}, V\in \RR^{n_1\times n_2}}\enscond{\frac12\norm{U}_F^2+\frac12\norm{V}_F^2}{X = UV}.
\end{align*}
By making use of the quadratic variational forms for the group $\ell_1$ and nuclear norm, we can rewrite
\eqref{eq:multitask} as
\begin{align*}
&\min_{v\in \RR^n,U\in \RR^{n\times T}} \min_{\substack{W\in \RR^{m\times m},\\Z\in \RR^{m\times T}}} \enscond{
		\frac{\norm{v}^2}{2} \!+\! \frac{\norm{U}_F^2}{2} \!+\! \frac{\la}{2} \norm{W}_F^2 \!+\! \frac{\la}{2} \norm{Z}_F^2 }{A(\diag(v)U)-Y = WZ
	}\\
&=\min_{v,W} f(v,W)\eqdef \max_{\alpha\in\RR^{m\times T}}\frac{1}{2} \norm{v}^2 - \frac{1}{2} \norm{v\odot A^\top \alpha}_F^2 +\frac{\la}{2} \norm{W}_F^2 - \frac{1}{2\la} \norm{W^\top \alpha}_F^2-\dotp{\al}{Y}.
\end{align*}
Note that the inner problem is a least squares problem with solution $\alpha$ satisfying
$$
(A\diag(v^2)A^\top +\frac{1}{\la}WW^\top)\alpha = -Y.
$$
The outer problem has gradient
$$
\partial_v f(v,W) = v - \pa{v_i \norm{A_i^\top \alpha}^2}_{i=1}^n\qandq \partial_W f(v,W) = \lambda W -\frac{1}{\la}\alpha \alpha^\top W.
$$

\bibliographystyle{plain} 
\bibliography{mybiblio}

\end{document}